\documentclass[numbers=noendperiod]{filbar-article}

\usepackage{filbar-all}

\usepackage[color={filbar theme color!20},textsize=scriptsize,tickmarkheight=2pt,colorinlistoftodos]{todonotes}

\title{Fibredness of 3-manifolds is in NP}
\author{Filippo Baroni}
\date{}

\FilBarDefineThemeColor{HTML}{009895}
\AtBeginDocument{
}

\zcRefTypeSetup{propertygroup}{
    Name-sg=Properties,
    Name-pl=Properties,
    name-sg=properties,
    name-pl=properties,
    cap
}

\usetikzlibrary{arrows.meta,backgrounds,calc,math,quotes}

\NewDocumentCommand{\myfiguresource}{O{}m}{%
    \includegraphics[#1]{figures/#2.pdf}
}

\NewDocumentCommand{\dist}{O{\CCC(F)}}{\operatorname{dist}_{#1}}

\zcRefTypeSetup{certificate}{
    Name-sg=Certificate,
    Name-pl=Certificates,
    name-sg=certificate,
    name-pl=certificates,
    cap
}
\NewDocumentCommand{\cert}{so}{\mathfrak{S}\IfBooleanT{#1}{^*}\IfValueT{#2}{_{\mathrm{#2}}}}

\RenewDocumentCommand{\v}{m}{\mathbf{v}_{#1}}
\NewDocumentCommand{\hboundary}{}{\partial_h}
\NewDocumentCommand{\vboundary}{}{\partial_v}
\NewDocumentCommand{\abstr}{m}{\operatorname{ab}(#1)}
\NewDocumentCommand{\embedd}{o}{\operatorname{emb}\IfValueT{#1}{_{#1}}}
\NewDocumentCommand{\thick}{m}{\operatorname{thick}(#1)}
\NewDocumentCommand{\thin}{m}{\operatorname{thin}(#1)}
\NewDocumentCommand{\twistnumber}{o}{\operatorname{twist}\IfValueT{#1}{_{#1}}}

\NewDocumentCommand{\length}{m}{\ell(#1)}
\NewDocumentCommand{\pospushoff}{m}{#1^+}
\NewDocumentCommand{\negpushoff}{m}{#1^-}
\NewDocumentCommand{\posnbhd}{m}{\NNN^+(#1)}
\NewDocumentCommand{\negnbhd}{m}{\NNN^-(#1)}

\NewDocumentCommand{\area}{m}{\operatorname{area}(#1)}
\NewDocumentCommand{\transfermap}{}{\bigtriangleup}

\begin{document}

\maketitle{}
\thispagestyle{empty}

\begin{abstract}
We show that the problem of deciding whether a compact orientable $3$\=/manifold fibres over the circle lies in the complexity class \NP{}.
\end{abstract}

\tableofcontents

\section{Introduction}
\label{sec:introduction}

A compact connected orientable $3$\=/manifold $M$ is \emph{fibred} if it fibres over the circle; the fibres of such a fibration are compact orientable surfaces.
When $M$ is irreducible, a classical theorem of \citeauthor{stallings:fibering-certain-3} (see \cite[Theorem~2]{stallings:fibering-certain-3}) shows that this is equivalent to the existence of a surjective homomorphism from $\pi_1(M)$ onto $\ZZ$ with finitely generated kernel.
More recently, the resolution \cite{agol:virtual-haken-conjecture} of Thurston's virtual fibring conjecture \cite[Question~18]{thurston:three-dimensional-manifolds} has shown that every closed hyperbolic $3$\=/manifold admits a finite\=/degree cover that fibres over the circle.
This striking result raises the fundamental question: how can we effectively detect if a $3$\=/manifold is fibred?
The aim of the present article is to answer this question as follows (see \zcref{thm:fibredness detection is in NP}).

\begin{theoremintro}\label{thm:intro:fibredness is in np}
The problem of deciding whether a given compact connected orientable $3$\=/manifold is fibred lies in \NP{}.
\end{theoremintro}

Recall that a decision problem lies in the complexity class \NP{} if, whenever the answer to the problem is ``yes'', there is a certificate of this fact that can be verified in time polynomial in the size of the input; for a formal definition of the complexity class \NP{}, we refer the reader to \cite[Chapter~2]{arora-barak:computational-complexity}.
In the setting of \zcref{thm:intro:fibredness is in np} -- as is customary in algorithmic $3$\=/dimensional topology -- the input $3$\=/manifold is given as a triangulation $\TTT$, and the size of the input is simply the number of tetrahedra in $\TTT$.

Several precursors of \zcref{thm:intro:fibredness is in np} exist in the literature:
\begin{enumarabic}
\item the problem of recognising $S^2\times S^1$ and $D^2\times S^1$ lies in \NP{} by \cite[Theorem~3]{ivanov:computational-complexity-basic};
\item the problem of recognising $S^1\times[0,1]\times S^1$ lies in \NP{} by \cite[Theorem~12.1]{lackenby:efficient-certification-knottedness};
\item the problem of deciding whether a given closed orientable irreducible atoroidal $3$\=/manifold fibres over the circle lies in \NP{} by \cite[Corollary~1.4]{schleimer:sphere-recognition-lies}.
\end{enumarabic}

The last result cited -- a theorem of \citeauthor{schleimer:sphere-recognition-lies} -- is quite close to our \zcref{thm:intro:fibredness is in np}, the only difference being that it requires the input $3$\=/manifold $M$ to be closed, irreducible, and atoroidal.
Note that all orientable fibred $3$\=/manifolds (with the exception of $S^2\times S^1$) are irreducible, and moreover irreducibility can be certified in polynomial time thanks to a later result of \citeauthor{lackenby:efficient-certification-knottedness} \cite[Theorem~1.6]{lackenby:efficient-certification-knottedness}.
Therefore, the requirement that $M$ be irreducible can now be omitted from the statement of \citeauthor{schleimer:sphere-recognition-lies}'s theorem.
On the other hand, closedness and atoroidality impose genuine restrictions on the class of $3$\=/manifolds to which \cite[Corollary~1.4]{schleimer:sphere-recognition-lies} applies.
We also remark that, while it is trivial to check whether a triangulated $3$\=/manifold is closed, it is not known whether atoroidality can be certified in polynomial time; in our upcoming work \cite{baroni:certifying-hyperbolicity-fibred}, we will prove some results in this direction -- specifically for fibred $3$\=/manifolds -- building on the content of this article.
In fact, in addition to being suitable for $3$\=/manifolds with non\=/empty boundary or with essential tori, the main advantage of the certificate we employ to prove \zcref{thm:intro:fibredness is in np} is that it can be used to recover the \emph{monodromy} of the fibration.
This result, however, is beyond the scope of this article; we defer precise statements and proofs to \cite{baroni:certifying-hyperbolicity-fibred}.

A closed orientable $3$\=/manifold $M$ is fibred if and only if it contains a closed embedded surface $F$ such that cutting $M$ along $F$ yields a $3$\=/manifold $M\cut F$ homeomorphic to $F\times[0,1]$; a similar statement holds when $M$ has non\=/empty boundary, if we require that the homeomorphism $\umap{M\cut F}{F\times[0,1]}$ sends the original boundary of $M$ onto $\boundary F\times[0,1]$.
One could then imagine proving \zcref{thm:intro:fibredness is in np} by providing the surface $F\subs M$ as a certificate of fibredness; the verifier would simply have to check that the $3$\=/manifold $M\cut F$ is a product interval bundle over $F$.
There is, however, a crucial obstacle to this approach, as illustrated by the following example.
Denote by $M_n$ the exterior of the $(f_n,f_{n+1})$-torus knot in the $3$\=/sphere, where $f_n$ is the $n$\=/th Fibonacci number.
The $3$\=/manifold $M_n$ admits a unique fibration, where the fibre $F_n$ has Euler characteristic $1-(f_n-1)\cdot(f_{n+1}-1)$, whose absolute value is exponential in $n$.
However, one can triangulate $M_n$ with a number of tetrahedra that is linear in $n$ (see \cite[Theorem~1.3]{fominykh-wiest:upper-bounds-complexity}).
Therefore, in this setting, even constructing the $3$\=/manifold $M_n\cut F_n$ would break the polynomial\=/time requirement.

Instead, our proof of \zcref{thm:intro:fibredness is in np} involves finding a certificate for fibredness whose verification does not require cutting along a fibre.
This is achieved by exploiting a general property of \emph{normal surfaces} (see \zcref{sec:normal surfaces}).
If $F$ is a normal surface in a triangulated $3$\=/manifold $M$, then the $3$\=/manifold $M\cut F$ is naturally decomposed into two codimension\=/$0$ submanifolds: the ``parallelity bundle'', which is guaranteed to be an interval bundle, and the ``guts'', which can be triangulated with few tetrahedra.
We then prove that, if $F$ is a \emph{least\=/weight} fibre of $M$, then checking whether $M\cut F$ is a product interval bundle becomes substantially easier.
In particular, it suffices to focus on the guts and on ``small'' components of the parallelity bundle, completely ignoring the components of high genus.
The technical statement that makes this idea precise is given in \zcref{thm:parallelity bundle of a least-weight fibre}; the main technical tools used therein are \emph{annular simplifications}, as first described in \cite{lackenby-purcell:triangulation-complexity-fibred}.
A more detailed, but still informal, overview of the certificate is given in \zcref{sec:overview of the certificate}, after all the necessary notation has been set up.

The material in this article is drawn from Chapters 2 and 3 of the author's PhD thesis \cite{baroni:certifying-hyperbolicity-fibreda}.
The content of Chapters 4 and 5, concerning the certification of hyperbolicity for fibred $3$\=/manifolds, will appear in the upcoming article \cite{baroni:certifying-hyperbolicity-fibred}.
The results therein will rely heavily on specific properties of the certificate that we construct here.
For this reason, while we have made every effort to keep this article self\=/contained, we will include a handful of statements that are not strictly necessary for our proof of \zcref{thm:intro:fibredness is in np}, making sure to flag them appropriately.

\paragraph{Conventions and notation.}
\begin{itemize}[beginpenalty=10000]
\item Logarithms are taken with base $2$.
\item  For a finite set $X$, we denote by $\abs{X}$ its cardinality.
\item When $X$ is a topological space, the notation $\abs{X}$ refers instead to the number of connected components of $X$.
\item We always work in the PL category.
\item If $Y$ is a subspace of a topological space $X$, then we denote by $\closure{Y}$ the closure of $Y$ in $X$; the ambient space $X$ will always be clear from context.
\item By $\interior{Y}$ we always mean $Y\setminus\boundary Y$ (which is not necessarily equal to the interior of $Y$ in $X$).
\end{itemize}

\section{Surfaces in triangulated 3-manifolds}

\subsection{Normal surfaces}
\label{sec:normal surfaces}

The most convenient way to describe a compact $3$\=/manifold for algorithmic purposes is by means of a \emph{triangulation}.
A triangulation $\TTT$ is a purely combinatorial object, consisting of a finite list of vertices, edges, triangles, and tetrahedra.
Each $i$\=/simplex is equipped with information about its attaching map to the $(i-1)$\=/skeleton.
The only requirement we ask of $\TTT$ is that its topological realisation is a compact $3$\=/manifold $M$.
We remark that, with this combinatorial description, each tetrahedron is endowed with an orientation (up to arbitrarily and universally fixing an orientation of the ``standard'' tetrahedron).
We say that the triangulation is \emph{oriented} if the orientations of its tetrahedra patch together consistently to define a global orientation of the $3$\=/manifold $M$.
By ``triangulation of an oriented $3$\=/manifold $M$'', we mean an oriented triangulation of $M$ whose orientation agrees with that of $M$.
A natural measure of complexity of a triangulation $\TTT$ is the number $\card{\TTT}$ of tetrahedra it contains, which we also call the \emph{size} of $\TTT$.
The complexity of an algorithm that takes as input a triangulated $3$\=/manifold will always be measured against this quantity.

Let $\TTT$ be a triangulation of a compact $3$\=/manifold $M$.
We say that a surface $F$ embedded in $M$ is \emph{in general position} if it is disjoint from the vertices of $\TTT$ and transverse to the edges and triangles of $\TTT$.
For such a surface $F$, we define the \emph{weight} of $F$ to be the number
\[
w(F)=\lvert F\cap\TTT^{(1)}\rvert
\]
of points in the intersection of $F$ with the $1$\=/skeleton $\TTT^{(1)}$ of $\TTT$.

Most ``interesting'' surfaces in $M$ can be described combinatorially in the framework of normal surfaces.
We refer the reader to \cite[Section~3.3]{matveev:algorithmic-topology-classification} for a comprehensive exposition on normal surfaces, of which we offer a summary below.
A \emph{normal disc} is a disc properly embedded in a tetrahedron $T$ of $\TTT$ and disjoint from the vertices of $T$ such that its boundary crosses at least one edge of $T$, and no edge of $T$ more than once.
It is not hard to see that, up to \emph{normal isotopy} -- that is, up to isotopy that fixes the vertices and preserves the $1$\=/skeleton and the $2$\=/skeleton -- there are exactly seven types of normal disc in a tetrahedron: four ``triangles'' and three ``quadrilaterals''.
A general position surface $F$ properly embedded in $M$ is \emph{normal} if it intersects each tetrahedron of $\TTT$ in a union of normal discs; the typical intersection of a normal surface with a tetrahedron is shown in \zcref{fig:normal surfaces:intersection with tetrahedron}.

Denote by $t$ the size of the triangulation $\TTT$.
A normal surface $F$ can be encoded by a vector $\v{F}\in\ZZ_{\ge 0}^{7t}$ of $7t$ non\=/negative integers, recording for each tetrahedron $T$ of $\TTT$ and each type of normal disc in $T$ the number of normal discs in $F\cap T$ of that type.
The vector $\v{F}$ determines the normal surface $F$ uniquely up to normal isotopy in $M$.
There are two constraints a vector $\mathbf{w}\in\ZZ_{\ge 0}^{7t}$ needs to satisfy in order to represent a normal surface.
The first is given by the \emph{matching equations}, which we now describe.
An arc properly embedded in a triangle $R$ is called \emph{normal} if its endpoints lie in the interiors of two distinct edges of $R$; two normal arcs are of the same type if their endpoints lie on the same two edges of $R$.
Suppose now that $R$ is a triangle in the $2$\=/skeleton of $M$ that does not lie in $\boundary M$.
For each type $q$ of normal arc in $R$, the matching equation for $q$ reads
\[
\mathbf{w}_i+\mathbf{w}_j=\mathbf{w}_k+\mathbf{w}_l,
\]
where $i$, $j$, $k$, and $l$ are the four types of normal disc (two triangles and two quadrilaterals) that intersect $R$ in normal arcs of type $q$, with $i$ and $j$ lying in one of the two tetrahedra adjacent to $R$, and $k$ and $l$ lying in the other; a graphical representation of the matching equation for $q$ is shown in \zcref{fig:normal surfaces:matching equation}.

\begin{myfigure}
\centering
\begin{subfigure}{0.45\textwidth}
\centering
\myfiguresource[scale=0.3]{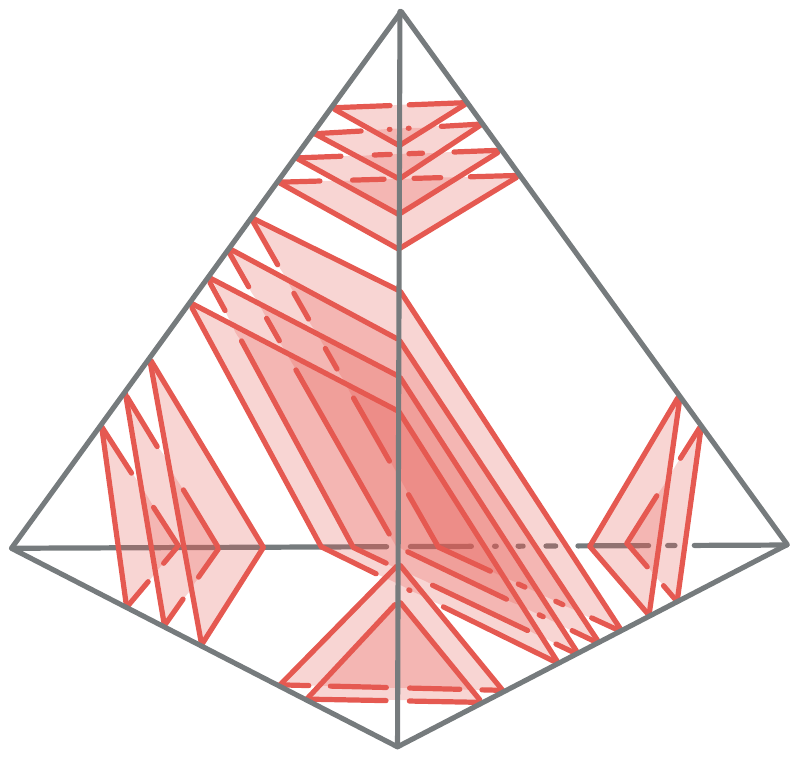}
\caption{}
\label{fig:normal surfaces:intersection with tetrahedron}
\end{subfigure}
\hfill
\begin{subfigure}{0.45\textwidth}
\centering
\myfiguresource[scale=0.3]{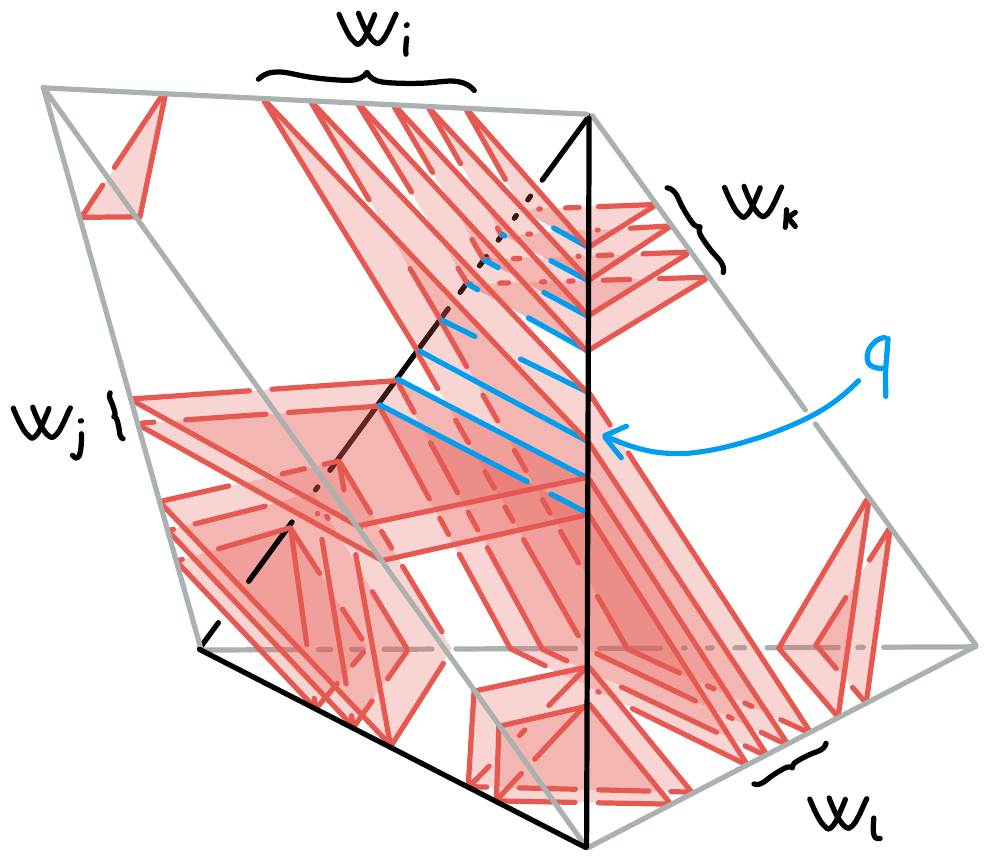}
\caption{}
\label{fig:normal surfaces:matching equation}
\end{subfigure}
\caption{\subref{fig:normal surfaces:intersection with tetrahedron} The intersection between a normal surface and a tetrahedron of the triangulation.
\subref{fig:normal surfaces:matching equation} The matching equation for the type $q$ of normal arc reads $\mathbf{w}_i+\mathbf{w}_j=\mathbf{w}_k+\mathbf{w}_l$.}
\end{myfigure}

The second constraint is given by the \emph{consistency equations}.
These equations enforce that each tetrahedron must contain at most one type of normal quadrilateral; in fact, it is easy to see that two normal quadrilateral of different types in the same tetrahedron must intersect.
More precisely, the consistency equation for a tetrahedron $T$ of $\TTT$ reads
\[
\text{at most one of $\mathbf{w}_i$, $\mathbf{w}_j$, and $\mathbf{w}_k$ is non\=/zero},
\]
where $i$, $j$, and $k$ are the three types of normal quadrilateral in $T$.
One can show that there is a one\=/to\=/one correspondence between normal isotopy classes of normal surfaces in $M$ and vectors in $\ZZ_{\ge 0}^{7t}$ satisfying the matching and consistency equations.
It will be implicitly understood that, whenever an algorithm manipulates a normal surface $F$ -- in particular, if it is given as input to the algorithm -- it is the associated vector that is being manipulated.
Note that the vector $\v{F}$ can be encoded with a number of binary digits that is linear in $t\cdot \log w(F)$.

Given two normal surfaces $G$ and $H$ in $M$, suppose that $\v{G}+\v{H}$ satisfies the consistency equations.
We can then define the \emph{normal sum} $G+H$ of $G$ and $H$ as the normal surface with normal vector $\v{G}+\v{H}$.
A normal surface is called \emph{fundamental} if it cannot be written as the normal sum of two non\=/empty normal surfaces.
A key property of fundamental surfaces is that their weights are uniformly bounded by a constant that depends only on the size of the triangulation $\TTT$.

\begin{proposition}[Weight of fundamental surfaces]
\label{thm:bound on weight of fundamental surfaces}
Let $\TTT$ be a triangulation of a compact $3$\=/manifold $M$ with $t$ tetrahedra, and let $F$ be a fundamental normal surface in $M$.
Then
\[
w(F)\le t^2\cdot2^{7t+7}.
\]
\end{proposition}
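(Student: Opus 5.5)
The plan is the standard argument of Hass--Lagarias--Pippenger: fundamental surfaces are bounded by vertex solutions of the normal surface cone. I would proceed as follows.

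First, fix a choice of quadrilateral type in each tetrahedron. The consistency equations then force the vector to lie in a coordinate subspace of $\RR^{7t}$ of dimension $n=4t+t=5t$, with the other quadrilateral coordinates set to zero. There are $3^t$ such choices, and every normal surface lies in at least one of the resulting cones. Concretely, the cone is
\[
C=\{\mathbf{w}\in\RR_{\ge0}^{n} : A\mathbf{w}=0\},
\]
where the rows of $A$ are the matching equations. It is pointed and polyhedral, and every entry of $A$ lies in $\{-1,0,1\}$, with at most four nonzero entries per row.

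Second, bound the extremal rays of $C$. Each extremal ray is the solution space of a system of rank $n-1$ obtained by combining rows of $A$ with coordinate equations $\mathbf{w}_i=0$. By Cramer's rule, the ray is spanned by an integer vector $\mathbf{v}$ whose entries are $(n-1)\times(n-1)$ minors of a matrix with entries in $\{-1,0,1\}$ and at most four nonzero entries per row. Hadamard's inequality bounds each such minor by $2^{n-1}$, since each row has Euclidean norm at most $2$. Hence $\lVert\mathbf{v}\rVert_\infty\le 2^{5t}$.

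Third, represent a fundamental $F$ in terms of these rays. Let $C$ be a cone containing $\v{F}$. By Carathéodory's theorem, $\v{F}=\sum_{i}\lambda_i\mathbf{v}_i$ with $\lambda_i\ge 0$ and at most $n$ extremal integer vectors $\mathbf{v}_i$. Suppose some $\lambda_i\ge 1$. Then $\v{F}-\mathbf{v}_i$ still lies in $C$, so it is a nonnegative integer vector satisfying the matching equations. It is also consistent, because it uses the same quadrilateral types. We would then have $F=(\v{F}-\mathbf{v}_i)+\mathbf{v}_i$, a normal sum of two non-empty surfaces, contradicting fundamentality; the exceptional case $\v{F}=\mathbf{v}_i$ is harmless, since then $F$ is itself a vertex solution. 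Hence every $\lambda_i<1$, and so
\[
\lVert\v{F}\rVert_\infty\le n\cdot 2^{5t}.
\]

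Finally, convert to weight. Each normal disc meets the $1$-skeleton in at most $4$ points, so $w(F)\le 4\cdot 7t\cdot\lVert\v{F}\rVert_\infty$, which is at most $28t\cdot 5t\cdot 2^{5t}$. This gives $w(F)\le 140\,t^2\,2^{5t}\le t^2\cdot 2^{7t+7}$, since $140<2^{7}$. There is ample slack, so the exact constants in the Hadamard step are not delicate; even the cruder bound $(\sqrt 4)^{n}$ with $n=7t$ would suffice.

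The main obstacle I expect is the third step: justifying that subtracting an extremal ray from $\v{F}$ preserves consistency and integrality. This is why the cone is fixed by the quadrilateral choice before applying Carathéodory's theorem.
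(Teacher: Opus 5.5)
Your proof is correct and takes essentially the paper's approach with the citation unpacked. The paper invokes Lemma~6.1 of Hass--Lagarias--Pippenger (each entry of the normal vector of a fundamental surface is at most $t\cdot 2^{7t+2}$, proved by the same Hadamard bound on vertex solutions and Carath\'eodory argument) and then multiplies by the $7t$ coordinates and $4$ points per normal disc, exactly as in your last step. Restricting to the $5t$-dimensional quadrilateral-type subspace even gives you the sharper entry bound $5t\cdot 2^{5t-1}$.

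There is one arithmetic slip: $140<2^{7}$ is false, since $2^7=128$. Justify the final inequality instead by $140\le 2^{2t+7}$ for $t\ge 1$, or by counting only the $5t$ possibly non-zero coordinates, which gives $100\,t^2 2^{5t}$. The slack lies in the exponent, not in the constant. For the same reason, your side remark that the crude bound $(\sqrt4)^{n}$ with $n=7t$ ``would suffice'' only works if you use $(n-1)$-minors.
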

\begin{proof}
This result is an immediate consequence of \cite[Lemma~6.1]{hass-lagarias-pippenger:computational-complexity-knot}, which states that each entry of $\v{F}$ is bounded above by $t\cdot 2^{7t+2}$.
Since the vector $\v{F}$ has $7t$ entries, and each normal disc contributes to at most $4$ points in $F\cap\TTT^{(1)}$, the desired inequality follows.
\end{proof}

The following result shows how incompressible boundary\=/incompressible surfaces in irreducible boundary\=/irreducible $3$\=/manifolds can be isotoped to be normal.

\begin{proposition}[Normalising surfaces]
\label{thm:normalising surfaces}
Let $F$ be an incompressible boundary\=/incompressible general position surface properly embedded in a triangulated compact irreducible boundary\=/irreducible $3$\=/manifold $M$.
Suppose that no component of $F$ is a sphere or a disc.
Then $F$ is isotopic to a normal surface $F'$ with $w(F')\le w(F)$.
\end{proposition}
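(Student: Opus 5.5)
The plan is the classical normalisation procedure of Kneser and Haken, with the weight $w(F)$ as the complexity that is never allowed to increase. We argue by induction on the pair $(w(F), c(F))$ in lexicographic order. Here $c(F)$ is an auxiliary count: the number of components of $F\cap R$, over all triangles $R$ of $\TTT$, that are simple closed curves. At each step we either find that $F$ is already normal, or we modify $F$ by an isotopy that does not increase the weight and strictly decreases $(w,c)$.

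We first remove closed curves in the $2$\=/skeleton. Suppose some triangle $R$ meets $F$ in a simple closed curve, and choose an innermost one, $\gamma$, bounding a disc $D\subs R$ with $\interior{D}\cap F=\emptyset$. By incompressibility, $\gamma$ bounds a disc $E\subs F$. The sphere $D\cup E$ bounds a ball by irreducibility. No component of $F$ is a sphere or a disc, so we can isotope $E$ across this ball to a push\=/off of $D$. The new surface meets $\TTT^{(1)}$ in a subset of the old intersection points, because $D$ avoids the $1$\=/skeleton; so the weight does not increase, and $c$ strictly decreases.

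Next we deal with bad arcs. Suppose $F\cap R$ contains an arc whose endpoints lie on the same edge $e$ of $R$. Take an outermost such arc $\alpha$, cobounding a disc $D\subs R$ with a sub\=/arc $\beta\subs e$.
\begin{itemize}
\item If $e$ is not contained in $\boundary M$, we isotope $F$ along $D$ past $e$. This removes the two points of $\boundary\alpha$ from $F\cap\TTT^{(1)}$, and possibly other points of $F\cap\beta$ as well, so the weight strictly decreases.
\item If $e\subs\boundary M$, then $D$ is a boundary\=/compressing disc for $F$. By boundary\=/incompressibility, $\alpha$ cuts off a disc $E\subs F$ together with an arc $\beta'\subs\boundary M$. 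Boundary\=/irreducibility and irreducibility of $M$ then give a ball, and the excluded disc components guarantee we can isotope $E$ to a push\=/off of $D$. The weight strictly decreases, since $\boundary\alpha$ is removed.
\end{itemize}

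Once $F$ meets every triangle in normal arcs only, we look inside each tetrahedron $T$. Each component of $F\cap T$ is a planar surface whose boundary curves on $\boundary T$ are made of normal arcs.
\begin{itemize}
\item If some component is not a disc, we compress it along a disc in $T$. Incompressibility and irreducibility show that this compression is realised by an isotopy supported near a ball, and $c$-type complexity decreases while the weight is unchanged. (For a precise measure here, one can add the number of components of $F\cap\interior{T}$, counted with Euler characteristic, to $c$.)
\item If a disc component has boundary meeting some edge of $T$ at least twice, then some face of $T$ contains a bad arc, which we have already eliminated. The only remaining possibility is a boundary curve of length $\ge 8$ crossing an edge twice. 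Such a curve has a sub\=/arc running from that edge back to it across two faces, and an isotopy across the corresponding disc through the edge reduces the weight by $2$.
\end{itemize}
When none of these moves applies, $F$ is normal.

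I expect the main obstacle to be the final step inside a tetrahedron: exhibiting the weight\=/reducing disc for a long normal curve, and checking that every isotopy used is supported away from the vertices, so that the counts stay well defined. A secondary check is that none of the isotopies creates new intersections with $\TTT^{(1)}$. The induction terminates because $(w,c)$ takes values in a well\=/ordered set, and the final surface $F'$ is isotopic to $F$ with $w(F')\le w(F)$.
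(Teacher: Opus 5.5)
Your strategy is the same as the paper's. The paper simply cites Matveev's normalisation moves and observes two things: they never increase weight, and, because $M$ is irreducible and $\partial$-irreducible and $F$ is incompressible and $\partial$-incompressible, they preserve the isotopy class of $F$ up to discarding trivial pieces. You are carrying out those moves by hand.

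The gap is the last step inside a tetrahedron, the one you flag yourself. As written it does not work. The two faces of $T$ containing $e$ meet only along $e$, so a sub-arc of $\partial D$ made of normal arcs that runs from $e$ back to $e$ passes through at least three faces, not two. More importantly, you never say which disc you push across, or why the push is an isotopy that misses the rest of $F$. The disc must lie inside $T$:
\begin{itemize}
\item take $p,q\in\partial D\cap e$, let $\beta\subset e$ be the segment between them, and let $\delta\subset D$ be an arc from $p$ to $q$;
\item push across a disc $\Delta\subset T$ with $\partial\Delta=\beta\cup\delta$ and $\Delta\cap F=\delta$.
\end{itemize}
Such a $\Delta$ exists only after an innermost choice. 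If another piece $D'$ of $F\cap T$ meets the interior of $\beta$, then $\partial D'$ lies in the disc of $\partial T$ cut off by $\partial D$ that contains that interior. So $\partial D'$ crosses $\beta$ at least twice, giving a shorter segment. Once the interior of $\beta$ misses $F$, the other disc pieces on that side of $D$ cut off balls disjoint from $\beta$, and $\Delta$ can be chosen in the ball that remains.

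When $e\subset\partial M$ you cannot push $F$ past $e$ at all. In that case $\Delta$ is a $\partial$-compressing disc, and you must rerun the $\partial$-incompressibility and $\partial$-irreducibility argument from your bad-arc step. This is exactly where those hypotheses are needed inside tetrahedra.

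There are also some smaller inaccuracies, none of which breaks the argument:
\begin{itemize}
\item Pieces of $F\cap T$ need not be planar, since they can have genus, though your compression step covers them anyway.
\item In that compression step the weight is not unchanged but strictly drops. The disc of $F$ you discard contains an entire curve of $F\cap\partial T$ made of normal arcs. This is fortunate, because your auxiliary count $c$ is already zero at that stage and could not decrease.
\item Triangles lying in $\partial M$ need separate, easy treatment in your first two steps. A closed curve there is a component of $\partial F$ bounding a disc in $\partial M$, which is ruled out by incompressibility and the absence of disc components. A bad arc there is removed by isotoping $\partial F$ within $\partial M$, not via a $\partial$-compressing disc.
\end{itemize}
With these additions the induction on $(w,c)$ goes through.
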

\begin{proof}
This is essentially conclusion 4 of \cite[Proposition~3.3.24]{matveev:algorithmic-topology-classification}, although that statement is slightly less general than what we are claiming.
One can, however, upgrade the proof therein, by noting that the eight normalisation moves described in \cite[\S3.3.3]{matveev:algorithmic-topology-classification}, when applicable, preserve the property of a surface to contain a subsurface that is isotopic to $F$; note that irreducibility and boundary\=/irreducibility of $M$, together with incompressibility and boundary\=/incompressibility of $F$, are crucial to this claim.
Moreover, these moves do not increase the weight of the surface, as noted in \cite[Remark~3.3.22]{matveev:algorithmic-topology-classification}.
\end{proof}

We say that a general position surface $F$ properly embedded in $M$ is \emph{least\=/weight} if it has minimum weight amongst all general position surfaces isotopic to it.
As an immediate consequence of \zcref{thm:normalising surfaces}, we see that every incompressible boundary\=/incompressible general position surface properly embedded in a triangulated compact irreducible boundary\=/irreducible $3$\=/manifold is isotopic to a least\=/weight normal surface, provided it does not have any sphere or disc components.

\subsection{Sub-complexes and triangulations of normal surfaces}
\label{sec:triangulations of normal surfaces}

Let $F$ be a triangulated compact surface.
In this article, the surface $F$ will always be a normal surface in some triangulated $3$\=/manifold, but we do not enforce this for now.
For the applications in the upcoming article \cite{baroni:certifying-hyperbolicity-fibred}, we will sometimes want to avoid situations in which a triangle has two or more edges lying on the boundary of the surface.
Hence, we give the following definition.

\begin{definition}[Flapless triangulation]
A triangulation of a compact surface $F$ is \emph{flapless} if each triangle has at most one edge lying on $\boundary F$.
\end{definition}

We will often need to deal with subsets of $F$ that are simplicial -- that is, unions of simplices -- but are not necessarily submanifolds.
A \emph{sub\=/$1$\=/complex} of $F$ is a union $X$ of edges of $F$; we denote by $\length{X}$ the number of edges in $X$.

Similarly, a \emph{sub\=/$2$\=/complex} of $F$ is a union $X$ of triangles of $F$; we denote by $\area{X}$ the number of triangles in $X$.
See \zcref{fig:triangulations of surfaces:sub-2-complex} for an example of a sub\=/$2$\=/complex of a triangulated surface.
Even though $X$ may have singular points that prevent it from being a subsurface -- namely, when it contains two triangles that intersect in a vertex and not in one of the adjacent edges -- there is an abstract triangulated surface $\abstr{X}$, together with an immersion $\map{\embedd[X]}{\abstr{X}}{F}$, such that $\embedd[X](\abstr{X})=X$ and $\embedd[X]$ is an embedding except at the vertices of $\abstr X$.
The surface $\abstr X$ is obtained by taking the triangles of $X$ and gluing them along shared edges, but not along isolated shared vertices.
We will often blur the distinction between $X$ and $\abstr{X}$; for instance, when we talk about the components of $X$, we will actually mean the images under $\embedd[X]$ of the components of $\abstr{X}$.
It will always be clear from the context whether we are talking about $X$ as a topological subspace of $F$ or as an abstract triangulated surface.

Sometimes, for convenience, we will want to transform a sub\=/$2$\=/complex $X$ of $F$ into an actual subsurface; this can be done in two ways.
The first way is to ``thicken'' $X$ by adding a small regular neighbourhood of $X$ in $F$; the result is a subsurface of $F$, that we denote by $\thick{X}$, as shown in \zcref{fig:triangulations of surfaces:thick thin}.
Note that $\thick{X}$ is defined up to isotopy and, as a topological space, it is homotopy equivalent to $X$.
We will always pick $\thick{X}$ in its isotopy class so that it does not intersect any other relevant objects in $F$ that are disjoint from $X$.
The second way to obtain a subsurface from $X$ is to ``carve out'' a regular neighbourhood of the boundary of $X$ from $X$.
More precisely, we let
\[
\thin{X}=\closure{X\setminus\thick{\closure{F\setminus X}}},
\]
as shown in \zcref{fig:triangulations of surfaces:thick thin}.
This is a subsurface of $X$, and it is homeomorphic to $\abstr{X}$.
Like $\thick{X}$, the subsurface $\thin{X}$ is defined up to isotopy; whenever possible, we will pick $\thin{X}$ so that it contains every relevant object in $F$ that is contained in $X$.

\begin{myfigure}
\centering
\begin{subfigure}{0.45\textwidth}
\centering
\myfiguresource[scale=0.3]{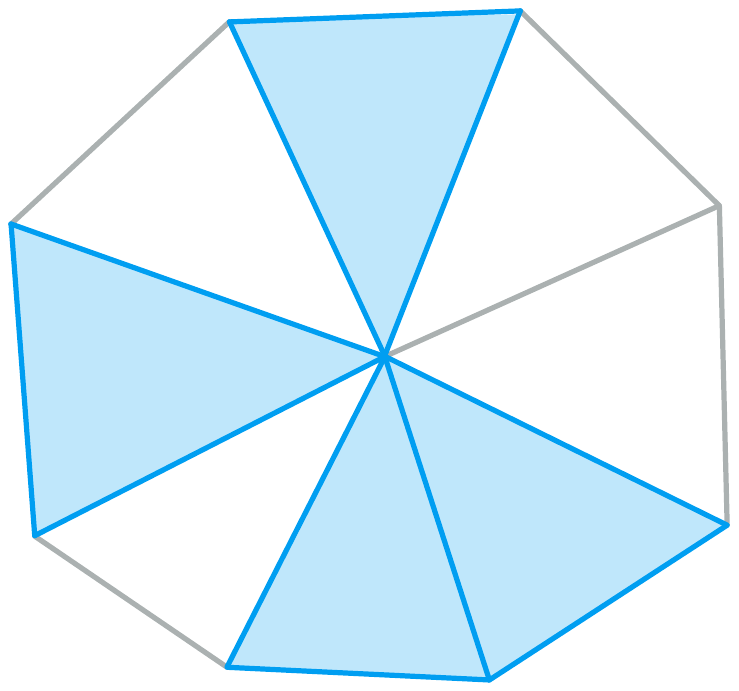}
\caption{}
\label{fig:triangulations of surfaces:sub-2-complex}
\end{subfigure}
\hfill
\begin{subfigure}{0.45\textwidth}
\centering
\myfiguresource[scale=0.3]{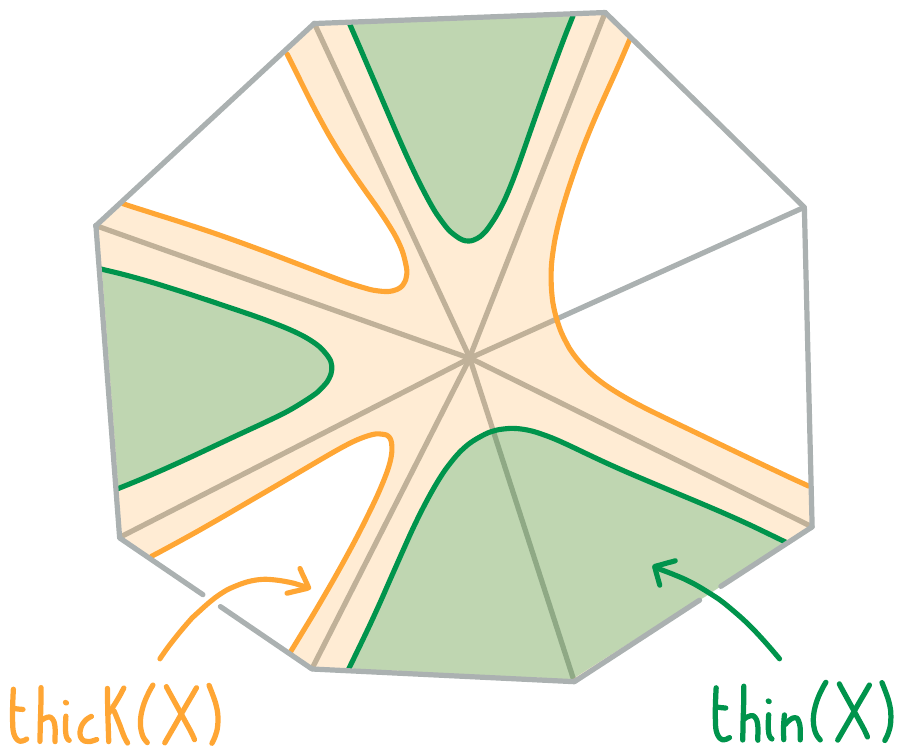}
\caption{}
\label{fig:triangulations of surfaces:thick thin}
\end{subfigure}
\caption{\subref{fig:triangulations of surfaces:sub-2-complex} A sub\=/$2$\=/complex $X$ of a triangulated surface.
\subref{fig:triangulations of surfaces:thick thin} The subsurfaces $\thick{X}$ and $\thin{X}$; note that $\thin{X}$ is a subset of $\thick{X}$.}
\end{myfigure}

Suppose now that $\TTT$ is a triangulation of a compact $3$\=/manifold $M$, and that $F$ is a normal surface in $M$.
The triangulation of $M$ induces a canonical triangulation of $F$, obtained as follows.
Normal triangles of $F$ are triangulated with $3$ triangles, and normal quadrilaterals are triangulated with $4$ triangles, as shown in \zcref{fig:triangulations of normal surfaces:triangulation}.
We say that two triangles in this triangulation of $F$ are of the same \emph{type} if they lie in the same tetrahedron $T$ of $M$, they lie on normal discs of the same type, and they intersect the same face of $\boundary T$ (here, we think of $T$ as an \emph{abstract tetrahedron}, that is, ignoring the face identifications).
Therefore, there are $3$ types of triangles for each type of normal triangle, and $4$ types of triangles for each type of normal quadrilateral.
From now on, we will always implicitly assume that normal surfaces are endowed with this triangulation.

\begin{remark}[The triangulation of a normal surface is flapless]
\label{rmk:the triangulation of a normal surface is flapless}
Even though the triangulation of a normal surface we have defined might seem ``suboptimal'', in the sense that it uses more triangles than necessary, it has the property (convenient for our results in \cite{baroni:certifying-hyperbolicity-fibred}) of being flapless.
\end{remark}

\begin{myfigure}
\centering
\myfiguresource[scale=0.3]{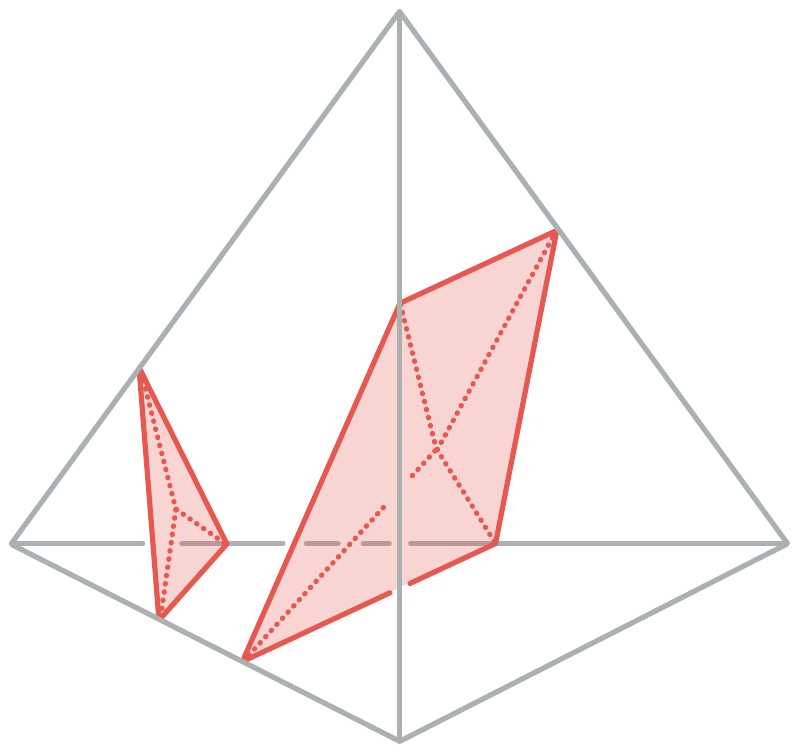}
\caption{Normal triangles of a normal surface are triangulated with $3$ triangles, and normal quadrilaterals with $4$ triangles.}
\label{fig:triangulations of normal surfaces:triangulation}
\end{myfigure}

We now describe how to compactly encode sub\=/$2$\=/complexes of $F$, with respect to the triangulation of $F$ inherited from $\TTT$.
Let us introduce the following notation: if $F$ is any compact surface, and $G$ is any subset of $F$, we define
\[
\boundary_F G=\closure{\boundary G\setminus\boundary F}.
\]
If $F$ is triangulated and $G$ is a sub\=/$2$\=/complex of $F$, then we can encode $G$ as follows: we list the edges in $\boundary_F G$, and we include an additional list of triangles of $F$, one for each component of $G$; note that this information is enough to fully reconstruct $G$.
This encoding uses a number of binary digits that is linear in
\[
\log \card{\TTT}\cdot\log w(F)\cdot\length{\boundary_F G}\cdot\card{G}.
\]
We will see in \zcref{sec:applications of agol-hass-thurston} how to extract topological information about $G$ from this compressed encoding.

\subsection{Agol-Hass-Thurston orbit-counting algorithm}

In \cite{agol-hass-thurston:computational-complexity-knot}, \citeauthor{agol-hass-thurston:computational-complexity-knot} introduce an algorithm to count the number of orbits of a given collection of pairings on an interval.
More precisely, the setting is as follows.
Let $N\ge 1$ be an integer.
A \emph{pairing} on $[1,N]\cap\ZZ$ is an order\=/preserving or order\=/reversing bijection
\[
\map{g}{[a,b]\cap\ZZ}{[c,d]\cap\ZZ},
\]
where $1\le a\le b\le N$ and $1\le c\le d\le N$ are integers.
Let
\[
\{\map{g_i}{[a_i,b_i]\cap\ZZ}{[c_i,d_i]\cap\ZZ}\}_{1\le i\le k}
\]
be a collection of pairings on $[1,N]\cap\ZZ$.
These pairings induce an equivalence relation on $[1,N]\cap\ZZ$, generated by
\[
\{x\sim g_i(x):1\le i\le k,a_i\le x\le b_i\}.
\]
We call the equivalence classes of this relation \emph{orbits}.
Note that a pairing $\umap{[a,b]\cap\ZZ}{[c,d]\cap\ZZ}$ can simply be described by the four integers $a$, $b$, $c$, and $d$, together with a sign indicating whether the pairing is order\=/preserving or order\=/reversing.
Therefore, the overall amount of information needed to describe the collection $g_1,\ldots, g_k$ is linear in $k\cdot\log N$.
The algorithm of \citeauthor{agol-hass-thurston:computational-complexity-knot}, in its most basic form, computes the number of orbits of a given collection of pairings.

\begin{theorem}[{\cite[Theorem~12]{agol-hass-thurston:computational-complexity-knot}}]
\label{thm:agol-hass-thurston orbit counting algorithm}
There is an algorithm that takes as input an integer $N\ge 1$ and a collection $g_1,\ldots, g_k$ of pairings on $[1,N]\cap\ZZ$, and outputs the number of orbits.
The running time of the algorithm is polynomial in $\log N$ and $k$.
\end{theorem}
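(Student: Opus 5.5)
The plan is to follow the Agol--Hass--Thurston strategy: repeatedly simplify the collection of pairings without changing the orbit structure, while keeping control on the total size. Throughout, we maintain a collection of pairings on an interval $[1,N']\cap\ZZ$ with $N'\le N$ and at most $k$ pairings, and we track how many orbits have been ``discarded'' along the way. In each round we perform three kinds of operations.

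\begin{enumarabic}
\item \emph{Normalisation.} Discard trivial pairings, namely the identity on an interval. Replace each pairing $g$ so that its domain $[a,b]$ lies to the left of, or coincides with, its range; this uses $g^{-1}$ where necessary.
\item \emph{Truncation.} Let $[m,N']$ be the maximal final segment not covered by any pairing's domain or range apart from at most one pairing. Points covered by nothing are isolated orbits: count them and delete them. If exactly one pairing $g$ has range reaching $N'$ and no other interval meets the part of its range beyond everything else, restrict $g$ and shrink $N'$.
\item \emph{Transmission.} If $g$ is a pairing whose range $[c,d]$ is the largest, taking right endpoints as the tie-breaker, and another pairing $h$ has domain or range contained in $[c,d]$, replace that interval by its preimage under $g$, composing $h$ with $g^{-1}$. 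The relation generated is unchanged, since $g$ identifies $[c,d]$ with $[a,b]$.
\end{enumarabic}

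For overlapping self-maps, where a pairing $g$ has $[a,b]\cap[c,d]\ne\emptyset$ and is order-preserving, $g$ is a translation by $c-a$. In that case it generates the same relation as the translation by $c-a$ restricted to a periodic block. We replace it by that restriction and reduce the step modulo the period. This Euclidean-algorithm style step is what keeps the complexity logarithmic in $N$. An order-reversing overlapping pairing is a reflection. It can be replaced by the reflection of the half-interval, and then truncated.

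The key steps are, in order:
\begin{enumarabic}
\item Verify that each move preserves the orbit count once the discarded isolated points are added back in.
\item Show that the number of pairings never increases.
\item Show that each round either removes a pairing or shrinks $N'$ by a constant factor. To get the latter, use the ``largest range'' pairing $g$: after transmission, nothing other than $g$ meets $[c,d]$ apart from the overlap handled by the periodic reduction. Truncation then removes a definite fraction of the interval, or else $g$ overlaps its own domain heavily, and the modular reduction halves the relevant length.
\end{enumarabic}
This gives $O(k\log N)$ rounds, and each round costs polynomial time in $k$ and $\log N$, since all endpoints are $O(\log N)$-bit integers.

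The main obstacle is step 3, the potential-function argument guaranteeing geometric shrinkage. Naive transmission can shrink the interval only by a constant amount, so the proof must pair the transmission with the periodic (modular) reduction of overlapping translations. It then has to show that, in every round, some quantity such as $\sum$ (interval lengths) decreases by a constant factor or a pairing disappears. I would try first to use the potential equal to the total length of all pairing domains, and check that each round where no pairing vanishes at least halves the maximal range length.
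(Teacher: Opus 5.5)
\paragraph{Review.}
The paper does not prove this statement. It imports it as a black box from Agol--Hass--Thurston, so your proposal has to stand as a reconstruction of their argument. Your list of moves (normalisation, truncation, transmission through the pairing with the largest range, acceleration for overlapping pairings) is essentially theirs. However, the only non-trivial content of the theorem is the dependence on $\log N$ rather than on $N$: a union--find on $[1,N]$ already counts orbits in time polynomial in $N$ and $k$. That dependence is exactly what your step~3 leaves open.

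Worse, the claims you propose to prove there are false. Intervals that contain the left endpoint $c$ of the chosen range $[c,d]$ but start to the left of $c$ cannot be transmitted, and they may end anywhere in $[c,d)$. So truncation can remove a single point, and slowness is not confined to the periodic case. For instance, take $k\ge 3$ order-preserving pairings
\[
g_i\colon[(i-1)w+1,\,iw]\to[kw+k-i+1,\,(k+1)w+k-i],\qquad 1\le i\le k,
\]
on $N=(k+1)w+k-1$. All have width $w$ and their ranges are staggered by one. In round $i\le k-1$ your procedure does the following:
\begin{itemize}
\item it truncates only the top point;
\item it then picks $g_{i+1}$ and transmits the range of $g_i$ into the domain of $g_{i+1}$;
\item for $i\le k-2$, the range of $g_{i+2}$ still meets the range of $g_{i+1}$ after transmission, contrary to your claim that nothing but $g$ meets $[c,d]$;
\item no pairing disappears, $N'$ drops by exactly $1$, and the maximal range length stays $w$.
\end{itemize}
So neither ``each round removes a pairing or shrinks $N'$ by a constant factor'' nor your proposed halving potential holds. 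The count of $O(k\log N)$ rounds is therefore unsupported. What is needed is an estimate that survives such runs of slow rounds, that is, an amortised complexity argument controlling straddling intervals. This is the technical heart of the Agol--Hass--Thurston theorem, and it is absent from your proposal.

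Your periodic step is also not orbit-preserving as stated. The map $x\mapsto x+1$ from $[1,9]$ to $[2,10]$ makes $[1,10]$ a single orbit, whereas its restriction to one period block, $\{1\}\to\{2\}$, does not. A correct acceleration first transmits the other intervals contained in the range of the periodic pairing $g$ by suitable powers $g^{-q}$, with $q$ found by a division rather than by repeated subtraction. Only then may $g$ be truncated to what remains. The reflection case is fine as you describe it.
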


Even though this algorithm works in the full generality described above, its primary application -- and the reason for its development -- is computing properties of normal surfaces in triangulated $3$\=/manifolds.
For instance, the basic version of the algorithm of \citeauthor{agol-hass-thurston:computational-complexity-knot} stated above can be used to compute the number of components of a normal surface $F$ in a triangulated $3$\=/manifold $M$.
We will now give an account of this classical result, already described in \cite[Corollary~14]{agol-hass-thurston:computational-complexity-knot}, in order to familiarise the reader with the algorithm, and set up the stage for more complex applications.

Let $N$ be the total number of normal discs of $F$; in other words, the integer $N$ is the sum of the coordinates of the vector $\v{F}$.
We think of integers in $[1,N]\cap\ZZ$ as the normal discs of $F$, in such a way that two consecutive normal discs of the same type in the same tetrahedron are represented by consecutive integers.
Therefore, the interval $[1,N]\cap\ZZ$ is split into $7t$ (possibly empty) subintervals representing the different types of normal disc, where $t$ is the number of tetrahedra of the triangulation of $M$.
Let $R$ be a triangle in the $2$\=/skeleton of $M$, and suppose that $R$ is adjacent to two (not necessarily distinct) tetrahedra $T_1$ and $T_2$.
Each of the three types of normal arc in $R$ determines up to three pairings, pairing normal discs of $T_1$ with normal discs of $T_2$.
These pairings are essentially the ones that determine the matching equations for normal surfaces; hence, we refer the reader to \zcref{fig:normal surfaces:matching equation} for a visual representation of the situation.
If we repeat this procedure for each triangle $R$ not contained in $\boundary M$, we obtain a collection of $\bigO(t)$ pairings on $[1,N]\cap\ZZ$.
By construction, the orbits of this collection are in natural bijection with the components of $F$; in fact, more precisely, an orbit consists exactly of the normal discs making up a component of $F$.
Since $N=\bigO(w(F))$, we can then apply the algorithm of \zcref{thm:agol-hass-thurston orbit counting algorithm} to compute the number of components of $F$; the running time of the algorithm is polynomial in $t$ and $\log w(F)$.
Therefore, we have just proved the following.

\begin{proposition}[Counting components of a normal surface]
\label{thm:counting components of a normal surface}
There is an algorithm that takes as input a triangulation of a compact $3$\=/manifold $M$ with $t$ tetrahedra and a normal surface $F$ in $M$, and outputs the number of components of $F$.
The running time of the algorithm is polynomial in $t$ and $\log w(F)$.
\end{proposition}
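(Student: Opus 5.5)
The approach is to reduce component counting to orbit counting and then apply \zcref{thm:agol-hass-thurston orbit counting algorithm}, as in the discussion preceding the statement. First I compute $N$, the sum of the coordinates of $\v{F}$. This is a sum of $7t$ integers, each of bit length $\bigO(\log w(F))$, so it takes time polynomial in $t$ and $\log w(F)$. Since every normal disc meets the $1$\=/skeleton in at least three points, $N\le w(F)$, and hence $\log N=\bigO(\log w(F))$. I then fix an ordering of $[1,N]\cap\ZZ$ that splits it into $7t$ consecutive blocks, one for each disc type. Within the block for a given type in a tetrahedron $T$, the discs are numbered in their natural linear order, moving from one vertex-side of $T$ towards the opposite side. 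The endpoints of all blocks are computed by partial sums.

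Next I construct the pairings. For each triangle $R$ not contained in $\boundary M$, with adjacent tetrahedra $T_1$ and $T_2$, and each normal arc type $q$ in $R$, the arcs of type $q$ in $R$ are linearly ordered from the corner of $R$ cut off by $q$ outwards. On the $T_1$ side, these arcs come first from the normal triangles of the relevant type and then from the relevant quadrilaterals; on the $T_2$ side the same holds. Consequently the identification of arcs across $R$ is a monotone bijection between two concatenations of two blocks. Splitting it at the (at most two) breakpoints, I get at most three pairings between subintervals, and each pairing is order-preserving or order-reversing according to how the gluing map of $R$ and the chosen orderings match up. The endpoints of these subintervals are obtained from the entries of $\v{F}$ by additions and subtractions, which again takes polynomial time. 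In total there are $k=\bigO(t)$ pairings.

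The main point to check is correctness: that the orbits are exactly the sets of discs forming a connected component of $F$. Two discs lie in the same component exactly when they are joined by a chain of discs, consecutive ones sharing a normal arc in some interior triangle. By construction, the pairings encode precisely these adjacencies. The delicate part is the bookkeeping that the nested order of arcs really is the concatenation described, including the orientation choice of each block and the degenerate cases $T_1=T_2$ or $R$ glued to itself. With that in place, running the algorithm of \zcref{thm:agol-hass-thurston orbit counting algorithm} takes time polynomial in $k=\bigO(t)$ and $\log N=\bigO(\log w(F))$, and its output is the number of components of $F$.
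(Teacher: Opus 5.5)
Your proposal follows the paper's argument essentially step for step. You number the normal discs in $[1,N]\cap\ZZ$ in blocks by disc type, produce at most three pairings for each normal arc type in each interior triangle (by splitting the concatenated triangle/quadrilateral blocks on the two sides), and apply the Agol--Hass--Thurston orbit-counting algorithm to the resulting $O(t)$ pairings.

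One step needs correcting: the inequality $N\le w(F)$ does not follow from each disc having at least three corners, and it is false in general. A point of $F\cap\TTT^{(1)}$ on an edge $e$ is a corner of one normal disc for each tetrahedron edge identified with $e$, not of just one disc. For example, the vertex link in a one-vertex triangulation of a closed manifold has $N=4t$ but $w=2t+2$. Replace it with $N\le 7t\cdot w(F)$. This holds because distinct discs of one type meet a common edge of their tetrahedron in distinct points, so each coordinate of $\mathbf{v}_F$ is at most $w(F)$. It gives $\log N\le\log(7t)+\log w(F)$, so your running-time bound is unaffected.
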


The most general version of the algorithm of \citeauthor{agol-hass-thurston:computational-complexity-knot} allows the interval $[1,N]\cap\ZZ$ to be equipped with a weight function with values in $\ZZ^d$ for some $d\ge 1$.
More precisely, to ensure efficiency, the weight function is given as a partition of $[1,N]\cap\ZZ$ into intervals $[p_j,q_j]\cap\ZZ$ for $1\le j\le m$, together with vectors $\mathbf{z}_j\in\ZZ^d$ for $1\le j\le m$.
This data defines a weight function $\map{z}{[1,N]\cap\ZZ}{\ZZ^d}$, by setting $z(x)=\mathbf{z}_j$ if $p_j\le x\le q_j$.
For a subset $X\subs[1,N]\cap\ZZ$, we define the weight of $X$ to be the sum of the weights of the elements of $X$.
The general version of the algorithm of \citeauthor{agol-hass-thurston:computational-complexity-knot} operates in this setting, and computes the list of orbits of a given collection of pairings, together with the weight of each orbit.
More precisely, the statement is as follows.

\begin{theorem}[{\cite[Theorem~16]{agol-hass-thurston:computational-complexity-knot}}]
\label{thm:agol-hass-thurston weighted orbit counting algorithm}
There is an algorithm that takes as input
\begin{itemize}
\item integers $N\ge1$ and $d\ge0$,
\item a collection $g_1,\ldots, g_k$ of pairings on $[1,N]\cap\ZZ$,
\item a partition of $[1,N]\cap\ZZ$ into intervals $[p_j,q_j]\cap\ZZ$ for $1\le j\le m$, and
\item vectors $\mathbf{z}_j\in\ZZ^d$ for $1\le j\le m$,
\end{itemize}
and outputs the list of orbits with their weights.
The running time of the algorithm is polynomial in $k$, $m$, $d$, $\log D$, and $\log N$, where $D$ is the maximum $\ell^1$-norm of the vectors $\mathbf{z}_1,\ldots,\mathbf{z}_m$.
\end{theorem}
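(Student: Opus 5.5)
Since this is the Agol--Hass--Thurston result, I would follow their strategy: simplify the system of pairings step by step, never changing the orbit structure, while carrying the weight function along so that each orbit's total weight is preserved. The invariant I would maintain is a triple. It consists of an interval $[1,N']\cap\ZZ$ with $N'\le N$, a collection of pairings on it, and a piecewise-constant weight function on it, given as a partition into intervals with vectors in $\ZZ^d$. The requirement is that there is a weight-preserving bijection between the current orbits and the original ones, where the weight of an orbit is the sum of the weights of its points. The algorithm stops when no pairings are left. At that point every point is its own orbit, and the output is the list of points $x\in[1,N']$ with their weights $z(x)$. To keep this output polynomial, I would require the pairing set to be empty and then list the intervals of the partition; these are few, and I expect them to coincide with orbits at termination. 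For orbits that have only $1$ element, it suffices to report them in compressed form, interval by interval.

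The elementary moves are the following.
\begin{itemize}
\item \emph{Deletion.} Remove any pairing that is the identity. Restrict pairings to drop fixed parts. Split order-reversing pairings whose domain and range overlap.
\item \emph{Truncation.} If the rightmost block $[c,N']$ is covered by exactly one pairing $g$, as its range, and no other pairing touches it, then every point there is equivalent to a unique point of $g^{-1}([c,N'])$. Delete the block and add the weight $z(y)$ of each deleted point $y$ to $z(g^{-1}(y))$. In terms of the partition, this means overlaying the pulled-back partition of $[c,N']$ onto the domain. The number of weight intervals grows only by the number of pieces in the block, and the norms of the weight vectors grow additively.
\item \emph{Transmission.} If the range of a pairing $g_i$ lies inside the range of a larger pairing $g_j$, replace $g_i$ by $g_j^{-1}\circ g_i$. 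This keeps the orbits unchanged and moves $g_i$ leftwards, so that the maximal pairing becomes the only one touching the right end.
\item \emph{Periodic merging.} If the domain and range of an order-preserving pairing overlap (a translation by $s$ on an interval of length $\gg s$), replace it by the translation by $s$ restricted to a window of length about $2s$. Merge two such overlapping periodic pairings into one with period $\gcd$, in the manner of the Euclidean algorithm.
\end{itemize}
The weights are unaffected by transmission and merging, since these moves preserve the point set.

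The hard part is the complexity bound. I would show that a round of moves (transmit everything into the largest pairing, merge periodic pieces, truncate) decreases a potential such as $\sum_i \log(\text{width of } g_i)$ plus a term for $N'$ by a constant factor or a constant amount. Transmission and merging are the moves that need care here: merging behaves like the Euclidean algorithm, so it takes $O(\log N)$ steps, and each truncation strictly shrinks $N'$ by the width of the block it removes. From this I would obtain a polynomial bound on the number of rounds in $k$ and $\log N$. Each move costs a constant number of arithmetic operations on $O(\log N)$-bit integers, plus vector additions in $\ZZ^d$. The entries of these vectors stay bounded by $N\cdot D$, so they have $O(\log N+\log D)$ bits. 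It then remains to check that the number of weight intervals $m$ grows only polynomially. I would argue this from the fact that each truncation adds at most as many breakpoints as there are pairing endpoints plus existing breakpoints in the removed block. Together these give a running time polynomial in $k$, $m$, $d$, $\log D$ and $\log N$.
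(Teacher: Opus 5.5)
The paper does not prove this statement; it quotes \cite[Theorem~16]{agol-hass-thurston:computational-complexity-knot}, and your outline borrows the right ingredients from that source. The weight bookkeeping is indeed the easy part. Note, though, that a cut also deletes the breakpoints inside the removed block, so $m$ grows by $O(1)$ per cut; the bound as you state it would allow $m$ to double at every cut.

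The genuine gap is where you yourself place the difficulty. Nothing in the proposal shows that a number of rounds polynomial in $k$ and $\log N$ suffices. That bound is the whole content of the theorem, already for the unweighted version \zcref{thm:agol-hass-thurston orbit counting algorithm}; checking that valid moves preserve orbits and weights is routine. Moreover, no per-round decrease of the kind you suggest can hold for your moves. Take $N'=A+B$ with $B\gg A$, $g_1\colon[1,A]\to[B+1,A+B]$ and $g_2\colon[A+1,A+B-3]\to[A+3,A+B-1]$. Neither range contains the other, so nothing is transmitted, and your truncation removes only the point $A+B$. From then on the range of $g_1$ lies inside the range of the periodic pairing $g_2$. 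Each transmission by $g_2^{-1}$ moves it by the period $2$. Your truncation can remove only the part of the tail of $g_2$ not covered by its own domain, which is again $2$ points. So your procedure needs $\Theta(N)$ steps on this input. Three things are missing:
(i) transmission through a periodic pairing in a single step, by the appropriate multiple of its period;
(ii) truncation of the tail of a periodic pairing modulo its period;
(iii) an actual bound on the number of rounds of the accelerated procedure. This bound cannot come from a per-round decrease of your potential, because the first round of the example still removes a single point.

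Some of your moves are also wrong as stated.
\begin{itemize}
\item Restricting a translation by $s$ to a window of length about $2s$ does not preserve orbits: points outside the window lose their connection to their residue class. The valid replacement is the modular truncation (ii). It needs the tail $[c,N']$ to be touched by no other pairing, and it does change weights: each $y\in[c-s,c-1]$ must receive $\sum_{j\ge 1,\,y+js\le N'}z(y+js)$, which is piecewise constant and computable from the partition.
\item Your truncation rule does not exclude this periodic situation, since it only forbids \emph{other} pairings from touching the block; then $g^{-1}(y)$ may itself be deleted. Your justification that weights are unaffected ``since these moves preserve the point set'' therefore uses the wrong criterion: what matters is that the orbit partition is unchanged.
\item Merging periodic pairings of periods $s_1,s_2$ into one of period $\gcd(s_1,s_2)$ preserves orbits only when the overlap has length at least $s_1+s_2-\gcd(s_1,s_2)$, by the Fine--Wilf theorem. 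For instance, the translations by $2$ on $[1,4]\to[3,6]$ and by $3$ on $[5,9]\to[8,12]$ overlap on $[5,6]$, yet have three orbits, not one.
\item You need a step that deletes maximal intervals lying in no domain or range and outputs them, with their weights, as singleton orbits. Otherwise such a point at the right end blocks every truncation, and the procedure stalls before the pairings are exhausted.
\end{itemize}
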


To be even more precise, the algorithm as described in \cite{agol-hass-thurston:computational-complexity-knot} outputs a list of triples of the form $(r,s,\mathbf{w})$, signifying that each point in $[r,s]\cap\ZZ$ is a representative of a different orbit, all having the same weight $\mathbf{w}\in\ZZ^d$.
However, for our purposes, we can assume that the output of the algorithm is simply the list of weights attained by the orbits, together with their multiplicities.
We also remark that running the algorithm of \zcref{thm:agol-hass-thurston weighted orbit counting algorithm} with $d=0$ and the constant weight function $z$ is equivalent to running the basic orbit\=/counting algorithm of \zcref{thm:agol-hass-thurston orbit counting algorithm}.

\subsection{Applications of orbit-counting to normal surfaces}
\label{sec:applications of agol-hass-thurston}

Let $\TTT$ be a triangulation of a compact $3$\=/manifold $M$, and let $F$ be a normal surface in $M$.
A point $x$ of $F\cap\TTT^{(1)}$ can be fully determined by specifying an edge $e$ of $\TTT$, an orientation of $e$, and an integer between $1$ and $\card{e\cap F}$.
A transverse orientation of $F$ at $x$ is then simply determined by an orientation of $e$.
This is the input format we use in the proposition below.

\begin{proposition}[Computing orientability and transverse orientation of a normal surface]
\label{thm:transverse orientation of a normal surface}
There is an algorithm that takes as input
\begin{itemize}
\item a triangulation $\TTT$ of a compact orientable $3$\=/manifold $M$ with $t$ tetrahedra,
\item a connected normal surface $F$ in $M$,
\item two points $x,y\in F\cap\TTT^{(1)}$, and
\item a transverse orientation at $x$,
\end{itemize}
and outputs:
\begin{itemize}
\item whether $F$ is orientable or not;
\item assuming $F$ is orientable, the transverse orientation at $y$ that is compatible with the given one at $x$.
\end{itemize}
The running time of the algorithm is polynomial in $t$ and $\log w(F)$.
\end{proposition}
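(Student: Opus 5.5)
The plan is to reduce both questions to a single run of the orbit-counting algorithm of \zcref{thm:agol-hass-thurston orbit counting algorithm}, applied to a ``doubled'' set of local sides of $F$. Let $N$ be the number of points of $F\cap\TTT^{(1)}$, so $N=w(F)$. We index these points by integers in $[1,N]$, grouped edge by edge. On each edge $e$ we fix a reference orientation and list the points of $e\cap F$ in the order they appear along it. We then work on $[1,2N]\cap\ZZ$: the integer $x$ stands for the pair (point $x$, the side of $F$ pointing along the reference orientation of its edge), and $x+N$ stands for the opposite side. Orbits of suitable pairings on $[1,2N]$ will then correspond to components of the boundary of a regular neighbourhood of $F$, i.e.\ of the orientation double cover $\tilde F$.

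The pairings come from the normal arcs of $F$. Fix a triangle $R$ of the $2$\=/skeleton and a normal arc type $q$ in $R$, joining edges $e$ and $e'$ of $R$. The arcs of type $q$ connect a consecutive block of points on $e$ (those nearest the common vertex of $e$ and $e'$) to a consecutive block on $e'$, monotonically in the obvious order. Transporting a transverse direction along such an arc inside $R$ is determined by the position of $R$ relative to the reference orientations of $e$ and $e'$. Concretely, the side pointing towards the vertex $e\cap e'$ on $e$ corresponds to the side pointing towards that vertex on $e'$. So each arc type yields two pairings, each order-preserving or order-reversing. One maps the block of ``positive'' copies on $e$ to either the positive or the negative copies on $e'$, and the other does the same for the negative copies; the choice depends only on the combinatorics of $R$. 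Every point of $F\cap\TTT^{(1)}$ is joined by normal arcs to its neighbours in the $1$\=/skeleton of the triangulated surface, and continuing a transverse orientation along paths in $F$ is exactly following such arcs. Hence the orbits are the components of $\tilde F$. Normal arcs in triangles of $\boundary M$ can be included or omitted; including all triangles is simplest. There are $\bigO(t)$ triangles and $3$ arc types per triangle, so we get $k=\bigO(t)$ pairings. The block endpoints are computable from $\v F$ in time polynomial in $t$ and $\log w(F)$: the number of arcs of type $q$ is a sum of normal coordinates of an adjacent tetrahedron, and positions along an edge are determined by the counts of arcs cutting off each vertex.

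Since $F$ is connected, $\tilde F$ has one or two components, and it has two exactly when $F$ is two-sided. In the orientable manifold $M$, two-sidedness is equivalent to orientability of $F$. So the first output is ``orientable'' precisely when the algorithm of \zcref{thm:agol-hass-thurston orbit counting algorithm} reports $2$ orbits. For the second output, assume $F$ is orientable. The given transverse orientation at $x$ selects either $x$ or $x+N$. The compatible orientation at $y$ is whichever of $y$ and $y+N$ lies in the same orbit. To test this, we count orbits again after adding the single pairing $\{y\}\to\{x\}$ (or its variant with $x+N$). The count drops from $2$ to $1$ exactly when the two points were in different orbits. Each call runs in time polynomial in $k=\bigO(t)$ and $\log 2N=\bigO(\log w(F))$, as required.

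The main obstacle is the second paragraph: checking that the side-transport rule through a triangle is consistent. That is, the sign and the positive/negative swap must be read off correctly from the reference orientations of $e$ and $e'$, so that orbits really are the components of $\tilde F$ and not some twisted version. I would settle this by a local model in a single triangle. Relative to the common vertex $v=e\cap e'$, the arcs of type $q$ are nested around $v$, and ``towards $v$'' is a consistent side of each arc. So the pairing on positive copies goes to positive or negative copies according to whether both reference orientations point towards $v$, or exactly one does.
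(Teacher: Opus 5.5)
Your proof is correct. Its core idea is the same as the paper's: both read orientability, and compatibility of transverse orientations, off the components of the two-sheeted ``side'' cover of $F$ (the horizontal boundary of $\NNN(F)$), counted with the Agol--Hass--Thurston algorithm.

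The two implementations differ. The paper observes that $2\v{F}$ is the normal vector of a normal surface $G$, namely this horizontal boundary. It therefore reruns the existing component-counting algorithm on $G$, with normal discs as units and pairings from interior triangles. A transverse orientation at $x$ is then one of the two points of $G\cap\TTT^{(1)}$ adjacent to $x$. The side-transport bookkeeping you single out as the main obstacle is absorbed by normal surface theory. To decide whether $x'$ and $y'$ lie in the same component, the paper uses the weighted version of the algorithm (the ``orbit-tracking'' trick, reused later for sub-$2$-complexes). You instead build the pairings on the doubled edge points by hand; unwound, this is exactly the arc structure of $G$. You also replace orbit-tracking by adding the one-point pairing $\{y\}\to\{x\}$ and recounting, which needs only the unweighted algorithm. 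Your route is more self-contained; the paper's needs no local verification.

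Two minor slips, neither affecting the argument:
\begin{itemize}
\item Your final sentence should say that positive copies go to positive copies when \emph{both or neither} reference orientation points towards $v$. Your earlier invariant (``towards $v$ on $e$'' corresponds to ``towards $v$ on $e'$'') already gives this.
\item In your edge-point model, arcs in triangles of $\boundary M$ cannot in general be omitted. The corners of a normal disc are linked only through the disc's boundary arcs, so a single normal triangle in a one-tetrahedron ball would give six orbits. Since you include all triangles, this does not matter.
\end{itemize}
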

\begin{proof}
The key fact is that the vector $2\v{F}$ is the normal vector of a surface $G$ in $M$ which is the horizontal boundary of a regular neighbourhood of $F$ in $M$.
In particular, we see that $G$ is connected if and only if $F$ is not orientable.
Therefore, we can apply the algorithm of \zcref{thm:counting components of a normal surface} to the normal surface $G$ and deduce whether $F$ is orientable or not.
Assuming it is, we now need to compute the transverse orientation at $y$ that is compatible with the given one at $x$.
Thinking of $G$ as the unit normal bundle of $F$ in $M$, a transverse orientation at $x$ corresponds to a point $x'$ of $G\cap\TTT^{(1)}$ -- namely, one of the two points of $G$ that lie above $x$ -- which can be readily computed.
Finding the correct transverse orientation at $y$ essentially amounts to deciding which of the two points of $G$ that lie above $y$ belongs to the same connected component of $G$ as $x'$.
To this aim, we describe an ``orbit\=/tracking'' trick that will also be useful later.

Pick one of the two points of $G$ that lie above $y$ and call it $y'$.
In order to decide whether $x'$ and $y'$ lie in the same connected component of $G$, we run a slightly modified version of the algorithm of \zcref{thm:counting components of a normal surface}.
We equip the collection of pairings used therein with a weight function $\map{z}{[1,N]\cap\ZZ}{\ZZ^2}$; call the first coordinate of this $\ZZ^2$ the $x'$ coordinate, and the second one the $y'$ coordinate.
The $x'$ coordinate is set to $1$ on the normal discs of $G$ that are adjacent to $x'$, and $0$ elsewhere.
Similarly, the $y'$ coordinate is set to $1$ on the normal discs of $G$ that are adjacent to $y'$, and $0$ elsewhere.
We then run the algorithm of \zcref{thm:agol-hass-thurston weighted orbit counting algorithm} with this weight function.
If the algorithm outputs an orbit where both the $x'$ coordinate and the $y'$ coordinate are positive, then $x'$ and $y'$ lie in the same connected component of $G$, and hence $y'$ defines the correct transverse orientation at $y$.
Otherwise, the points $x'$ and $y'$ lie in different connected components of $G$, and hence the transverse orientation at $y$ is the opposite one.
\end{proof}

Recall that our combinatorial representation of a sub\=/$2$\=/complex $G$ of a normal surface $F$ in a triangulated $3$\=/manifold $M$ consists of two pieces of data: the edges of $\boundary_F G$ and a list containing one triangle of $F$ for each component of $G$.
If we want to make use of this representation, we are faced with two tasks.
Firstly, we need to be able to decide if a given representation is \emph{valid}, in the sense that it comes from an actual sub\=/$2$\=/complex of $F$.
Secondly, we need to be able to extract the relevant topological information about a sub\=/$2$\=/complex $G$ from its combinatorial representation.
This includes the Euler characteristic and number of boundary components of each component of $G$, but this is not enough: we also need to know how exactly the boundary components of $\abstr{G}$ are glued to the rest of $F$.
For this reason, we introduce the following definition.
If $F$ is oriented, then $\abstr{G}$ inherits an orientation, and hence each boundary component $b$ of $\abstr{G}$ is canonically oriented.
An \emph{$F$\=/boundary sequence} for $b$ is a sequence $e_1,\ldots,e_k$, where each $e_i$ is either an edge of $\boundary_FG$ or the symbol $\boundary F$, satisfying the following property: there exists a continuous orientation\=/preserving surjection $\map{f}{[0,k]}{b}$ that is an embedding on $(0,k)$ and such that, for each $1\le i\le k$, we have that
\begin{align*}
\embedd[G](f([i-1,i]))&=e_i&&\text{if $e_i$ is not $\boundary F$},\\
\embedd[G](f([i-1,i]))&\subs\boundary F&&\text{if $e_i$ is $\boundary F$}.
\end{align*}
Moreover, we ask that if $e_i=\boundary F$ for some $1\le i\le k$, then $e_{i+1}\neq\boundary F$ (where indices are taken modulo $k$).
Loosely speaking, one obtains an $F$\=/boundary sequence for $b$ by walking along $b$ (with the orientation induced by that of $\abstr{G}$) and recording the sequence of edges of $\boundary_FG$ that one encounters, interrupted by segments of $\boundary F$.
We remark that the $F$\=/boundary sequence for $b$ is unique up to cyclic permutations.

\begin{proposition}[Finding the components of a sub\=/$2$\=/complex of a normal surface]
\label{thm:finding the components of a sub-2-complex of a normal surface}
There is an algorithm that takes as input
\begin{itemize}
\item a triangulation of a compact oriented $3$\=/manifold $M$ with $t$ tetrahedra,
\item a connected transversely oriented normal surface $F$ in $M$,
\item a collection $\EEE$ of edges of $F$, and
\item a collection $\SSS$ of triangles of $F$,
\end{itemize}
and decides whether there exists a sub\=/$2$\=/complex $G$ of $F$ such that $\boundary_F G$ consists precisely of the edges of $\EEE$, and $\SSS$ contains at least one triangle of each component of $G$.
If this is the case, the algorithm also outputs, for each component $C$ of $G$:
\begin{itemize}
    \item the list of triangles in $\SSS$ that are contained in $C$;
    \item the Euler characteristic of $\abstr{C}$;
    \item the number of boundary components of $\abstr{C}$;
    \item for each component $b$ of $\boundary\abstr{C}$, an $F$\=/boundary sequence for $b$.
\end{itemize}
The running time of the algorithm is polynomial in $t$, $\log w(F)$, and the cardinalities of $\EEE$ and $\SSS$.
\end{proposition}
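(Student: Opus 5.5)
The plan is to encode the triangles of $F$ as an interval $[1,N]\cap\ZZ$ and use pairings, as in \zcref{thm:counting components of a normal surface}, with weighted orbits (\zcref{thm:agol-hass-thurston weighted orbit counting algorithm}). Fix $N$ to be the number of triangles of $F$. Order them so that triangles of the same type, meaning same tetrahedron, same normal disc type and same face, occupy consecutive integers in the natural order of parallel discs. This splits $[1,N]\cap\ZZ$ into $\bigO(t)$ intervals. Within a normal disc, the $3$ or $4$ triangles are glued to each other along internal edges; these gluings give order-preserving pairings between intervals of different types. Triangles in adjacent tetrahedra are glued across faces of $\TTT$ via the normal-arc pairings, exactly as in the matching equations. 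The edges in $\EEE$ must be removed from these gluings, so we cut each pairing interval at the positions of the edges of $\EEE$. Each such edge splits at most one pairing into at most two pieces, so the total number of pairings stays $\bigO(t+\card{\EEE})$, and each is describable in $\bigO(\log w(F))$ bits. The orbits of this cut collection are exactly the components of $F\cut\EEE$, meaning the closures of the regions of $F$ minus the edges of $\EEE$.

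Validity is checked using weights. A sub-$2$-complex $G$ with $\boundary_FG=\EEE$ and $\SSS$ meeting every component exists iff two conditions hold. First, let $\GGG$ be the union of the regions of $F\cut\EEE$ containing a triangle of $\SSS$; then every edge of $\EEE$ must have exactly one side in $\GGG$, or lie on one region on both sides and be an edge the region meets on one side only, which we handle as below. Second, no edge outside $\EEE$ may separate a region in $\GGG$ from a region outside. The second condition is automatic, since regions are separated only by edges of $\EEE$. So $G=\GGG$ is forced, and the candidate is determined.

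To test the first condition, give the weight function one coordinate per element of $\SSS\cup\EEE$, each coordinate split into two sides of the edge. Set coordinate $s$ to $1$ on the triangle $s$. For each edge $e\in\EEE$, set a "left" and a "right" coordinate to $1$ on the two triangles adjacent to $e$, using the transverse orientation of $F$ (\zcref{thm:transverse orientation of a normal surface}) to name sides consistently. This uses $d=\bigO(\card{\SSS}+\card{\EEE})$ coordinates and $m=\bigO(t+\card{\SSS}+\card{\EEE})$ weight intervals. From the orbit weights we read off which region contains which elements of $\SSS$ and which sides of each $e$. We then check that for every $e\in\EEE$ exactly one side lies in a region hit by $\SSS$. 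Edges on $\boundary F$ are excluded from $\boundary_FG$ by definition, and we check that none is in $\EEE$.

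For the topological outputs of each region $C$, add weight coordinates counting triangles, and count interior edges and vertices by fractional contributions. Scale by a common denominator: for example, weight each triangle by $6$ and subtract $3$ per triangle for edges, using half-edges at $\EEE$ and $\boundary F$ so that they are counted correctly. Vertices are the delicate part, since isolated singular vertices are not glued in $\abstr{C}$. I would instead compute $\chi(\abstr C)$ as $V-E+T$ where $V$ is counted via corners. A vertex of $\abstr C$ is a cyclic or linear fan of corners, so we run a second AHT instance whose points are corners of triangles (again $\bigO(t)$ types), paired across non-$\EEE$ edges. Its orbits are the vertices of $\abstr F\cut\EEE$, and each can be attributed to its region by a weight coordinate per region representative. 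Boundary components arise similarly from a third instance on the "boundary half-edges" (sides of edges in $\EEE$ and $\boundary F$ facing $C$), paired through corner fans. Its orbit count per region gives the number of boundary components.

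The main obstacle is producing the $F$-boundary sequences, since a boundary component may be traversed via exponentially many $\boundary F$ segments. However, its $\EEE$-edges are polynomially many, and consecutive $\boundary F$ segments are merged. Using the orbit-tracking trick from the proof of \zcref{thm:transverse orientation of a normal surface}, for each edge side $e\in\EEE$ we determine its successor along $b$ in the induced orientation. Take the oriented boundary path starting at the end of $e$ and follow corner fans and $\boundary F$ arcs. To do this, we run an AHT instance where the path elements are boundary half-edges and $\boundary F$ edges, pairings go through fans, and weights mark each $\EEE$ side. We then find which $\EEE$ side first follows $e$, and whether a $\boundary F$ segment intervenes. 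Finding the first one is not directly an orbit query, so I would cut the $\EEE$-sides out of this path graph, making each orbit an arc between consecutive $\EEE$-sides. The orbit containing the outgoing end of $e$ then has weight identifying its other endpoint, plus a coordinate counting $\boundary F$ edges. Chaining these successors gives the sequence in polynomial time. Components with no $\EEE$ edges are closed circles contained in $\boundary F$, recorded as the single symbol $\boundary F$.
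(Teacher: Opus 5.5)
Your set-up is sound and follows the paper. That covers the triangles ordered by type on $[1,N]\cap\ZZ$ with pairings across the edges not in $\EEE$. It also covers your validity test: no edge of $\EEE$ on $\partial F$, and exactly one side of each edge of $\EEE$ in an orbit meeting $\SSS$. The alternative clause about an edge having the same region on both sides should be dropped, since such an edge is never in $\partial_F G$. Your successor-chaining for $F$-boundary sequences is also sound, and more concrete than the paper's version. Once the vertices of $\EEE$ are cut out, each orbit of the $\partial F$-edge instance is an arc lying in a single region, so nothing has to be attributed to a region there.

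The gap is in the per-region counts. You run separate instances on corners and on boundary half-edges, then attribute each orbit to its region ``by a weight coordinate per region representative'' to obtain an ``orbit count per region''. The Agol--Hass--Thurston algorithm cannot do this. Its weights must be constant on polynomially many intervals, whereas membership in a region $C$ is defined by the orbits of a different instance and is in general not of that form. Consecutive triangles in a block are parallel copies rather than neighbours in $F$, so membership in $C$ can switch back and forth exponentially often along a block. A coordinate supported on the representative triangle only tags the few orbits passing through it. So these auxiliary instances give you global numbers of vertices and boundary curves, not numbers per component of $G$.

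For $\chi(\abstr{C})$, the fractional idea you started with and then abandoned suffices; it is what the paper means by $\chi$ being a linear function of $z(C)$. At a vertex of $F$ not on $\EEE$, all corners lie in one region and give a single vertex of $\abstr{C}$. Their number is determined by the type: $3$ or $4$ at the centre of a normal disc, and twice the degree of the corresponding edge of $\TTT$ otherwise. So giving each triangle the sum of the reciprocals over its corners (after clearing denominators) counts these vertices linearly. The $O(\card{\EEE})$ vertices on $\EEE$ have fans of $O(t)$ corners and can be corrected by direct inspection.

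For the number of boundary components there is no such shortcut. The closed components of $\partial F$ avoiding the vertices of $\EEE$ can be exponentially many and of different lengths. You must know how many lie in each $C$, also in order to output the right number of sequences consisting of the single symbol $\partial F$. The missing ingredient is the paper's fourth block of coordinates, which records in $z(C)$, for each triangle adjacent to $\EEE$, how many triangles of $C$ of the same type precede it. Counts of this kind at the breakpoints of the pairings let you restrict the adjacency pairings of boundary edges to the edges of $C$ and re-index them. This works because boundary edges adjacent through a vertex off $\EEE$ lie in the same region, so the restrictions are again monotone bijections between intervals. The result, built from $z(C)$ alone, is a pairing system on some $[1,N']\cap\ZZ$ whose orbits are exactly the components of $\partial\abstr{C}$.
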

\begin{proof}
Denote by $\TTT$ the triangulation of $M$, and by $\RRR$ the induced triangulation of $F$.
Recall that the triangles of $\RRR$ are grouped into types, as detailed in \zcref{sec:triangulations of normal surfaces}.
In particular, each tetrahedron of $\TTT$ contains at most $16$ types of triangles of $\RRR$: at most $4$ for the unique type of normal quadrilateral, and at most $3$ per type of normal triangle.
We can then arrange the triangles of $\RRR$ in an interval $[1,N]\cap\ZZ$, where triangles of the same type form contiguous blocks; within each block, the triangles are ordered so that triangles lying on consecutive normal discs are adjacent.
It is easy to define a system of pairings on this interval, such that two triangles of $\RRR$ are paired if and only if they share an edge of $\RRR$ that is not in $\EEE$.
We also define a weight function
\[
\map{z}{[1,N]\cap\ZZ}{\ZZ^{d}},
\]
where the $d$ coordinates encode several pieces of information for each triangle $R$ of $\RRR$.
\begin{enumarabic}
\item The first block of $16t$ coordinates keeps track of the type of $R$ (here, we think of types as being indexed by the integers $1,\ldots,16t$); namely, the $i$\=/th coordinate is $1$ if $R$ is a triangle of type $i$, and $0$ otherwise.
\item The second block of $\card{\SSS}$ coordinates keeps track of which triangle in $\SSS$ -- if any -- is equal to $R$.
\item The third block of $2\card{\EEE}$ coordinates keeps track of which edges in $\EEE$ are adjacent to $R$, and from which side.
\item There is also a fourth block of coordinates that keeps track, for each triangle $R'$ of the same type as $R$ that is adjacent to some edge in $\EEE$, whether $R'$ appears before or after $R$, according to one of the two natural orders of the normal discs they lie on; we consider this order to be fixed once and for all.
These coordinates will be used later to compute information about the boundary components of $\abstr{G}$.
\end{enumarabic}

Run the algorithm of \zcref{thm:agol-hass-thurston weighted orbit counting algorithm} on the interval $[1,N]\cap\ZZ$, the collection of pairings, and the weight function $z$ we described above.
We can now assess whether a sub\=/$2$\=/complex $G$ of $F$ as required in the statement exists.
If any edge in $\EEE$ lies in $\boundary F$, then the answer is ``no''.
If the union of the edges in $\EEE$, seen as a graph, has a vertex of degree $1$ that does not lie on $\boundary F$, then the answer is ``no''.
Each orbit in the output of the algorithm of \zcref{thm:agol-hass-thurston weighted orbit counting algorithm} either contains a triangle in $\SSS$, or it does not; this can be assessed by looking at the second block of coordinates of the weight of the given orbit.
If some edge in $\EEE$ is contained in $0$ or $2$ orbits containing a triangle in $\SSS$ (which can be assessed by looking at the third block of coordinates), then the answer is ``no''.
Otherwise, the answer is ``yes'', and each orbit containing a triangle in $\SSS$ corresponds to a component of $\abstr{G}$; in other words, the sub\=/$2$\=/complex $G$ is the union of the orbits that contain a triangle in $\SSS$.

Fix now a component $C$ of $\abstr{G}$.
The weight $z(C)$ already contains, in the second block of coordinates, the list of triangles in $\SSS$ that lie inside $C$.
It is also not hard to see that the Euler characteristic of $C$ is an explicitly computable linear function of the weight $z(C)$ -- namely, of the first block of coordinates.
Computing the number of boundary components of $C$ and their $F$\=/boundary sequences is a bit more involved.
First, we can use the data contained in $z(C)$ to construct an interval $[1,N']\cap\ZZ$ that represents the edges of $\boundary C$, together with a collection of pairings encoding the adjacency of edges of $\boundary C$.
Note that we need to use the fourth block of coordinates here, to identify which edges of $\EEE$ lie on $\boundary C$.
Running the algorithm of \citeauthor{agol-hass-thurston:computational-complexity-knot} will then output the number of components of $\boundary C$.

In order to compute the $F$\=/boundary sequences, start by picking an edge $e_1\in\EEE$ that lies in $\boundary C$.
Note that we can recover which side of $e_1$ the component $C$ lies on from $z(C)$.
An application of \zcref{thm:transverse orientation of a normal surface} will allow us to deduce the correct orientation of the component $b$ of $\boundary C$ containing $e_1$.
We can then start walking along $b$ according to this orientation, explicitly recording the sequence $e_1,\ldots,e_i$ of edges in $\EEE$ that we encounter.
This process will terminate either when we get back to $e_1$ -- in which case we have successfully computed the $F$\=/boundary sequence for $b$ -- or when we hit $\boundary F$.
If the latter happens, we set $e_{i+1}=\boundary F$.
In order to understand what the next edge $e_{i+2}\in\EEE$ is, we construct an interval $[1,N'']\cap\ZZ$ representing the edges of $\boundary C\cap\boundary F$, together with a collection of pairings encoding the adjacency of said edges.
Using the ``orbit-tracking'' trick described in the proof of \zcref{thm:transverse orientation of a normal surface}, we can find the component of $\boundary C\cap\boundary F$ that comes after $e_i$ in the $F$\=/boundary sequence of $b$, together with the following edge $e_{i+2}\in\EEE$.
We can repeat this process until we get back to $e_1$, thus completing the $F$\=/boundary sequence for $b$.
Using the same technique, we can compute the $F$\=/boundary sequences for all the components of $\boundary C$ that are not contained in $\boundary F$.
Once we have done this, since we know how many boundary components $C$ has, we can deduce the number of components of $\boundary C$ that are contained in $\boundary F$; the $F$\=/boundary sequence for these components is simply $\boundary F$.
\end{proof}

\begin{proposition}[Deciding containments of sub\=/$2$\=/complexes of normal surfaces]
\label{thm:deciding containment of sub-2-complexes of normal surfaces}
For a compact oriented $3$\=/manifold $M$ triangulated with $t$ tetrahedra, a connected transversely oriented normal surface $F$ in $M$, and a sub\=/$2$\=/complex $G$ of $F$, the following questions can be answered by an algorithm with the specified running time.
\begin{enumarabic}
\item Given a triangle $T$ of $F$, decide whether $T$ is contained in $G$, in polynomial time in $t$, $\log w(F)$, $\card{G}$, and $\length{\boundary_F G}$.
\item Given two triangles $T_1$ and $T_2$ in $G$, decide whether they belong to the same component of $G$, in polynomial time in $t$, $\log w(F)$, $\card{G}$, and $\length{\boundary_F G}$.
\item Given a sub\=/$2$\=/complex $G'$ of $F$, decide whether $G'$ is contained in $G$, in polynomial time in $t$, $\log w(F)$, $\card{G}$, $\length{\boundary_F G}$, $\card{G'}$, and $\length{\boundary_F G'}$.
\item Given a sub\=/$2$\=/complex $G'$ of $F$, decide whether $G'$ intersects $G$, and whether $\interior{G'}$ intersects $\interior{G}$, in polynomial time in $t$, $\log w(F)$, $\card{G}$, $\length{\boundary_F G}$, $\card{G'}$, and $\length{\boundary_F G'}$.
\end{enumarabic}
\end{proposition}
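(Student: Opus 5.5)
All four items reduce to runs of the weighted orbit-counting algorithm of \zcref{thm:agol-hass-thurston weighted orbit counting algorithm}, set up exactly as in the proof of \zcref{thm:finding the components of a sub-2-complex of a normal surface}. We are given $G$ in compressed form: the edges of $\boundary_F G$ and a list $\SSS$ of triangles, one per component. We arrange the triangles of $F$ in an interval $[1,N]\cap\ZZ$ by type, and pair two triangles whenever they share an edge of $F$ not in $\boundary_F G$. The orbits are then the components of $F\setminus\boundary_F G$ in the sense of the abstract cut-open surface, and $G$ is the union of the orbits that contain a triangle of $\SSS$.

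\textbf{Items 1 and 2.} For (1), we add one extra weight coordinate that is $1$ on $T$ and $0$ elsewhere, and a block of $\card{\SSS}$ coordinates marking the triangles of $\SSS$. The triangle $T$ lies in $G$ exactly when the orbit with positive $T$-coordinate also has a positive $\SSS$-coordinate. Item (2) uses the same run with two marker coordinates, one for $T_1$ and one for $T_2$, as in the orbit-tracking trick of \zcref{thm:transverse orientation of a normal surface}. The triangles belong to the same component of $G$ if and only if some single orbit has both coordinates positive. One caveat: components of $G$ are components of $\abstr{G}$, glued along edges only. Our pairings glue only along edges, so this matches the definition.

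\textbf{Item 3.} This needs a common refinement. Take $\EEE=\boundary_F G\cup\boundary_F G'$ and cut along all of these edges. Each orbit $O$ of the resulting pairings is a region contained in a single component of $F\setminus\EEE$, so $O$ lies entirely inside or entirely outside $G$, and likewise for $G'$. To decide membership of $O$ in $G$, we do not need $\SSS$ directly. Instead, we decide it from one representative triangle of $O$ by a query of type (1) against $G$, which is a separate run cut only along $\boundary_F G$.

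Obtaining a representative costs nothing extra. The algorithm of \zcref{thm:agol-hass-thurston weighted orbit counting algorithm} outputs triples $(r,s,\mathbf{w})$, and any $r$ is an explicit element of its orbit. The number of orbits of the refined system is at most polynomial in $\length{\boundary_F G}+\length{\boundary_F G'}+\card{G}+\card{G'}$. We may restrict attention to orbits adjacent to some edge of $\EEE$ or containing a triangle of $\SSS\cup\SSS'$. The remaining orbits are whole components of $F$ disjoint from $\EEE$, and since $F$ is connected there is at most one such orbit, namely all of $F$, which is handled trivially.

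Then $G'\subs G$ if and only if every orbit lying in $G'$ also lies in $G$. Since the query is polynomial for each of polynomially many orbits, this is polynomial overall.

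\textbf{Item 4.} With the same refinement, $\interior{G'}\cap\interior{G}\neq\emptyset$ if and only if some orbit lies in both $G$ and $G'$. The weaker question, whether $G'\cap G\neq\emptyset$, also allows intersection along edges or at isolated vertices.
\begin{itemize}
\item For edges, check whether some edge $e\in\boundary_F G\cap\boundary_F G'$ has one side in $G$ and the other in $G'$. The sides can be read from the third block of weight coordinates, as in the proof of \zcref{thm:finding the components of a sub-2-complex of a normal surface}.
\item For vertices, note that a vertex shared by $G$ and $G'$ without a shared triangle must lie on both $\boundary_F G$ and $\boundary_F G'$. Only polynomially many such vertices occur as endpoints of edges in $\EEE$, and for each one we check whether triangles of both complexes are incident to it. Incidence around a vertex can be tested via the type data and side data of the incident edges of $\EEE$; alternatively, we find one triangle incident to the vertex in each relevant sector and use query (1).
\end{itemize}

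\textbf{Main obstacle.} The hardest step is item (4) at the level of vertices, that is, deciding incidence of triangles of $G$ and $G'$ at a shared vertex using only compressed data. My first approach is to walk around the link of each such vertex. The walk crosses only polynomially many sectors delimited by edges of $\EEE$, and each sector is determined by its bounding edges. This reduces the check to edge-side data already produced by the algorithm.
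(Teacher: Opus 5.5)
Your proposal is correct and is essentially the paper's argument: orbit-tracking for (1) and (2), then reducing (3) and (4) to polynomially many type-(1) membership queries. Where you query representatives of the orbits of the common refinement cut along $\boundary_F G\cup\boundary_F G'$, the paper queries the triangles meeting $\boundary_F G\cup\boundary_F G'$, and it handles (3) through the complement $\closure{F\setminus G}$; these amount to the same thing. The ``main obstacle'' you flag is not a real one: because $\boundary_F G\subs G$ and $\boundary_F G'\subs G'$, once the interiors are disjoint, $G$ and $G'$ meet if and only if $\boundary_F G$ and $\boundary_F G'$ share a point, which can be read off directly from the two edge lists, so your vertex-incidence test is automatically satisfied and no walk around vertex links is needed.
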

\begin{proof}
The first two questions can be easily answered with the ``orbit\=/tracking'' trick described in the proof of \zcref{thm:transverse orientation of a normal surface}.
The third question reduces to the fourth one, since $G'$ is contained in $G$ if and only if $\interior{G'}$ is disjoint from $F\setminus G$; note that the sub\=/2\=/complex $\closure{F\setminus G}$ can be computed in polynomial time, since $\boundary_F\closure{F\setminus G}=\boundary_F G$, and for each edge $e$ of $\boundary_F G$, we can decide which of its two adjacent triangles belongs to $\closure{F\setminus G}$.
Therefore, we only need to show how to answer the fourth question.

We have seen in the proof of \zcref{thm:finding the components of a sub-2-complex of a normal surface} how to compute the list of components of $\abstr{G}$ and $\abstr{G'}$; by checking all possible pairs of components, we can suppose, without loss of generality, that $\abstr{G}$ and $\abstr{G'}$ are connected.
If $\boundary_F G$ and $\boundary_F G'$ are both empty, then $G$ and $G'$ intersect if and only if they are contained in the same component of $F$, which can be decided with an ``orbit\=/tracking'' trick; the same holds for $\interior{G}$ and $\interior{G'}$.
Otherwise, it is not hard to see that $\interior{G}$ and $\interior{G'}$ intersect if and only if there is a triangle of $F$ that is contained in $G\cap G'$ and intersects $\boundary_F G$ or $\boundary_F G'$.
Since the number of such triangles is bounded above by a polynomial in $t$, $\length{\boundary_F G}$, and $\length{\boundary_F G'}$, we can check all possible triangles in polynomial time.
Finally, if $\interior{G}$ and $\interior{G'}$ are disjoint, then $G$ and $G'$ intersect if and only if $\boundary_F G$ and $\boundary_F G'$ intersect.
This can be checked by direct inspection, thus providing an answer to the fourth question in the statement.
\end{proof}

\section{Guts and parallelity bundles}

\subsection{Pre-sutured manifolds and sub-3-complexes}

Sutured manifolds are a well\=/established tool in the study of $3$\=/manifolds, introduced by \citeauthor{gabai:foliations-topology-3} in \cite{gabai:foliations-topology-3}.
In this article, we will need sutured manifolds to describe candidate interval bundle structures on $3$\=/manifolds.
In particular, some of the items in our certificate will be ``$3$\=/manifolds with sutures'' that, in a valid certificate, will always be suture\=/preservingly homeomorphic to product interval bundles; however, if the certificate is invalid, this may not be the case, and in fact the sutures may not even be annuli or tori.
For this reason, we introduce a very lax variant of sutured manifolds, admitting arbitrary sutures, to fit our needs.

\begin{definition}[Pre\=/sutured manifold]
\label{def:pre-sutured manifold}
A \emph{pre\=/sutured manifold} is a compact oriented $3$\=/manifold $M$ together with a decomposition of $\boundary M$ into three compact surfaces $\boundary_0 M$, $\boundary_1 M$, and $\boundary_v M$; we require that $\boundary_0 M$ and $\boundary_1 M$ are disjoint, and that $\boundary_v M$ and $\boundary_0 M\cup\boundary_1 M$ intersect precisely along their boundaries.
The subsurface $\vboundary M\subs \boundary M$ is called the \emph{vertical boundary} of $M$.
We also write $\hboundary M$ for $\boundary_0 M\cup\boundary_1 M$, and we call it the \emph{horizontal boundary} of $M$.
\end{definition}

The terminology we use is borrowed from interval bundles, rather than from standard sutured manifold theory.
For a compact connected surface $F$, there is a unique orientable interval bundle $M$ over $F$.
Its horizontal boundary $\hboundary M$ is a two\=/sheeted cover of $F$, while its vertical boundary $\vboundary M$ is a disjoint union of annuli.
Whenever we say that a $3$\=/manifold is an interval bundle over a surface, we will always implicitly endow it with this natural pre\=/sutured manifold structure.
We will mostly be dealing with \emph{product interval bundles}, that is, $3$\=/manifolds of the form $M=F\times[0,1]$, where $F$ is a compact orientable surface.
In this case, note that $\vboundary M=\boundary F\times[0,1]$ and $\boundary_i M=F\times\{i\}$ for $i\in\{0,1\}$.
When $M$ is a \emph{twisted interval bundle} over a connected non\=/orientable surface, there is no natural way to split its horizontal boundary into $\boundary_0 M$ and $\boundary_1 M$; in this case we will allow both possible assignments -- that is, $\boundary_0 M=\hboundary M$ and $\boundary_1 M=\emptyset$ or vice versa.

An arc properly embedded in the vertical boundary of a pre\=/sutured manifold $M$ is \emph{vertical} if it intersects $\boundary_0M$ and $\boundary_1M$ each in a single point.
A subset of the vertical boundary of $M$ is \emph{vertical} if it is a disjoint union of (possibly infinitely many) vertical arcs; in particular, a compact connected vertical subset of $\vboundary M$ is necessarily a disc or an annulus.

Two pre\=/sutured manifolds $M$ and $N$ are \emph{homeomorphic as pre\=/sutured manifolds} if there is an orientation\=/preserving homeomorphism $\umap{M}{N}$ sending $\boundary_i M$ to $\boundary_i N$ for $i\in\{0,1\}$ -- and, consequently, $\vboundary M$ to $\vboundary N$.

A \emph{pre\=/sutured triangulation} is a triangulation of a pre\=/sutured manifold $M$, such that $\hboundary M$ and $\vboundary M$ are simplicial subsurfaces of $M$.
Note that the additional data -- that is, the surfaces $\boundary_0 M$, $\boundary_1 M$, and $\boundary_v M$ -- can be encoded with a number of binary digits that is linear in the size of the triangulation.
We say that a pre\=/sutured triangulation of a compact $3$\=/manifold $M$ is \emph{suitable} if $M$ is homeomorphic as a pre\=/sutured manifold to $\boundary_0 M\times[0,1]$.

Similarly to the $2$\=/dimensional setting, we will sometimes need to deal with subsets of a $3$\=/manifold that are submanifolds away from a lower\=/dimensional singular locus.
Let $M$ be a compact orientable $3$\=/manifold with a polyhedral cell structure -- meaning that $M$ is obtained by gluing a collection of polyhedral pieces along their faces.
This polyhedral cell structure might be a triangulation, or it might arise from cutting a triangulated $3$\=/manifold along a normal surface, as we will see in \zcref{sec:cutting along a normal surface}.
A \emph{sub\=/$3$\=/complex} of $M$ is a union $X$ of cells of $M$ such that the \emph{singular locus} of $X$ -- that is, the set of points of $x\in X$ such that no open neighbourhood of $x$ in $M$ intersects $X$ in an open $3$\=/ball or half\=/ball -- is a compact $1$\=/manifold properly embedded in $M$.
Like in the $2$\=/dimensional case, the sub\=/$3$\=/complex $X$ can be thought of as an abstract compact $3$\=/manifold $\abstr{X}$, together with an immersion $\map{\embedd[X]}{\abstr{X}}{M}$ such that $\embedd[X](\abstr{X})=X$.
The $3$\=/manifold $\abstr{X}$ can be obtained by taking the cells of $M$ contained in $X$ and gluing them along their common faces (but not along common isolated edges).
The immersion $\embedd[X]$ is an embedding, except at the points of $\abstr{X}$ that get mapped to the singular locus of $X$.
Like in the $2$\=/dimensional case, we will often blur the distinction between $X$ and $\abstr{X}$; in particular, by ``components of $X$'', we will mean the components of $\abstr{X}$, or possibly their images under $\embedd[X]$.

Similarly to what we did for sub\=/$2$\=/complexes of surfaces, we can define $\thick{X}$ and $\thin{X}$ for a sub\=/$3$\=/complex $X$ of $M$.
The definition is the same: we let $\thick{X}$ be the union of $X$ with a small regular neighbourhood of $\boundary X$ in $M$, and we let
\[
\thin{X}=\closure{M\setminus(\thick{\closure{M\setminus X}})}
\]
(note that $\closure{M\setminus X}$ is also a sub\=/$3$\=/complex of $M$).

A \emph{pre\=/sutured subcomplex} structure on $X$ is defined analogously to \zcref{def:pre-sutured manifold}, with the additional requirements that the singular locus of $X$ should be properly embedded in $\vboundary X$.
Note that a pre\=/sutured subcomplex structure on $X$ induces a pre\=/sutured manifold structure on $\abstr{X}$, by pull\=/back via $\embedd[X]$.
Conversely, a pre\=/sutured manifold structure on $\abstr{X}$ induces a pre\=/sutured subcomplex structure on $X$, provided that the preimage of the singular locus of $X$ under $\embedd[X]$ is properly embedded in $\vboundary\abstr{X}$.
If $X$ is a pre\=/sutured subcomplex of $M$, by \emph{suitable pre\=/sutured triangulation} of $X$ we mean the image of a suitable pre\=/sutured triangulation $\TTT$ of $\abstr{X}$ under $\embedd[X]$, provided that the preimage of the singular locus of $X$ under $\embedd[X]$ is a union of edges of $\TTT$.

\subsection{Cutting along a normal surface}
\label{sec:cutting along a normal surface}

If $F$ is a compact surface properly embedded in a compact $3$\=/manifold $M$, we can \emph{cut $M$ along $F$} to obtain a new compact $3$\=/manifold.
More precisely, we define $M\cut F$ to be the closure in $M$ of $M\setminus\NNN(F)$, where here and in the following $\NNN$ denotes a closed regular neighbourhood.
If, additionally, the $3$\=/manifold $M$ is oriented and the surface $F$ is transversely oriented, then $M'=M\cut F$ has a natural pre\=/sutured manifold structure.
In particular, we set $\vboundary M'=M'\cap\boundary M$, so that $\hboundary M'$ is the union of two parallel copies of $F$ in $M$; by convention, we set $\boundary_0 M'$ to be one of these copies, and $\boundary_1 M'$ to be the other, such that the transverse orientation of $F$ points from $\boundary_1 M'$ to $\boundary_0 M'$ at each point of $F$.
When $M$ is a compact oriented $3$\=/manifold and $F$ is a transversely oriented surface properly embedded in $M$ -- which we will assume for the rest of this section -- we will always implicitly endow $M\cut F$ with this pre\=/sutured manifold structure, as depicted in \zcref{fig:cutting along a normal surface:pre-sutured}.
Moreover, in this setting, the surface $F$ inherits a natural orientation; we will not emphasise this, but we will always implicitly assume that $F$ is oriented in this way.

The fact that $\NNN(F)$ is an interval bundle over $F$ provides two homeomorphisms
\[
\map{\pospushoff{(-)}}{F}{\boundary_0 M'}\quad\text{and}\quad\map{\negpushoff{(-)}}{F}{\boundary_1 M'},
\]
that we call the \emph{positive push\=/off} and the \emph{negative push\=/off} respectively; they are only defined up to isotopy, and they are depicted in \zcref{fig:cutting along a normal surface:push-offs}.

\begin{myfigure}
\centering
\begin{subfigure}{0.45\textwidth}
\centering
\myfiguresource[scale=0.3]{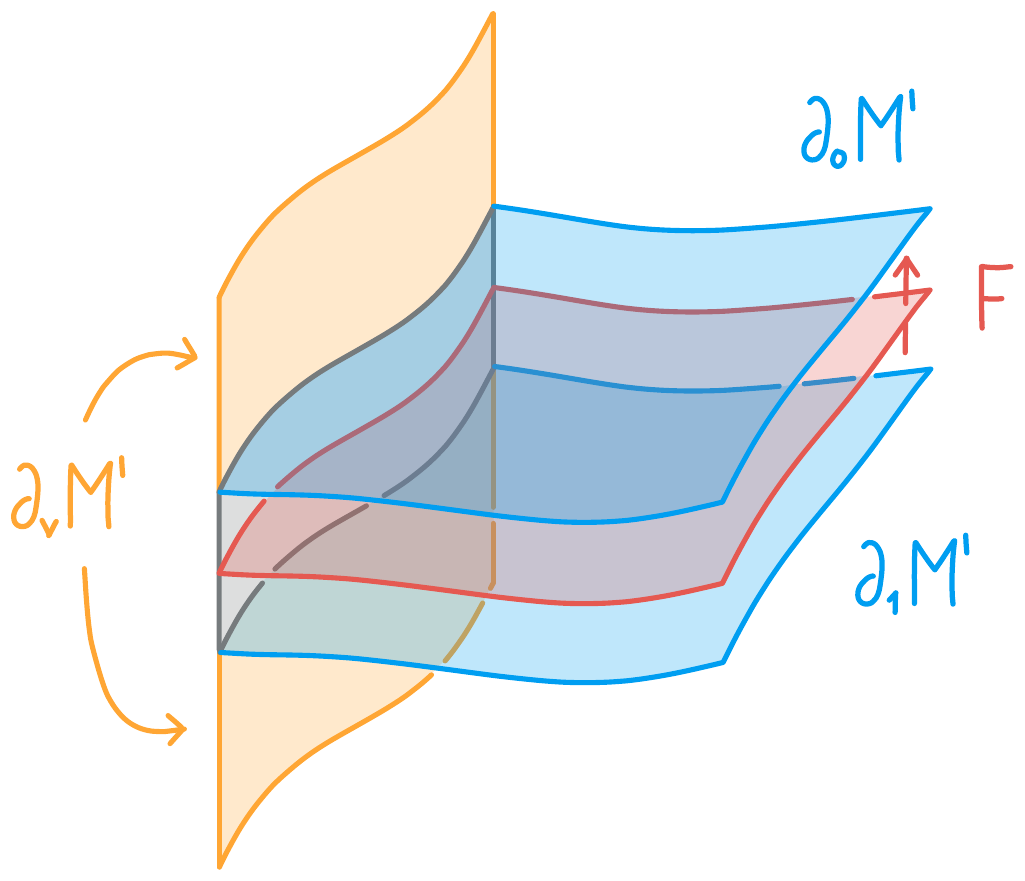}
\caption{}
\label{fig:cutting along a normal surface:pre-sutured}
\end{subfigure}
\hfill
\begin{subfigure}{0.45\textwidth}
\centering
\myfiguresource[scale=0.3]{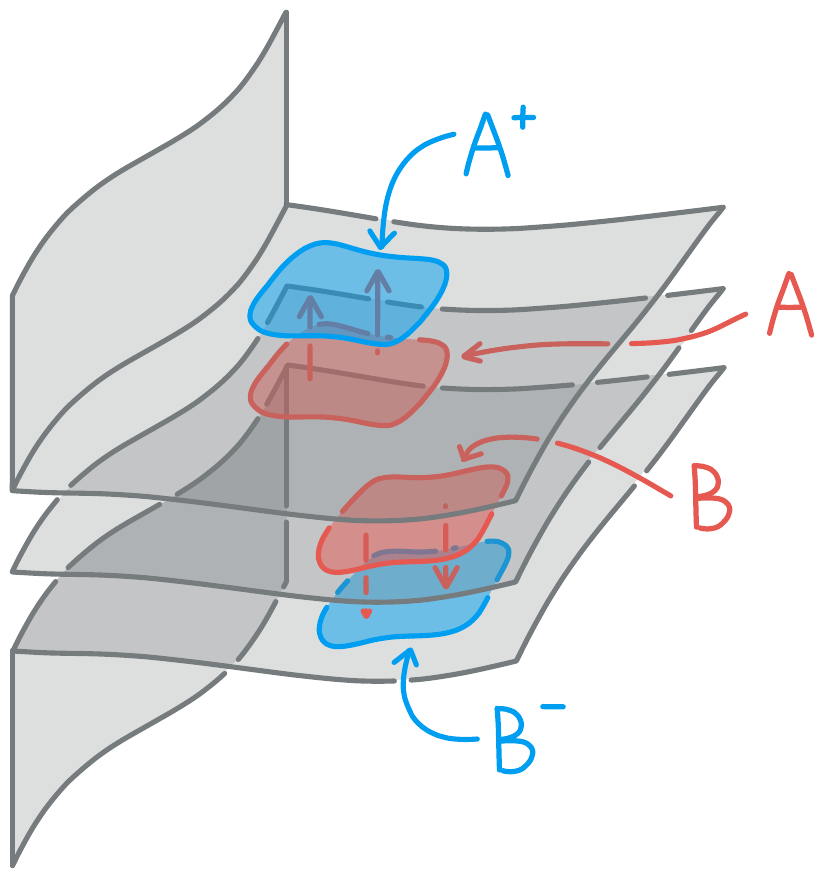}
\caption{}
\label{fig:cutting along a normal surface:push-offs}
\end{subfigure}
\caption{\subref{fig:cutting along a normal surface:pre-sutured} The subsurfaces $\boundary_0 M'$, $\boundary_1 M'$, and $\vboundary M'$ of $\boundary M'$ inducing the pre\=/sutured manifold structure on $M\cut F$; the transverse orientation of $F$ is represented by an arrow.
\subref{fig:cutting along a normal surface:push-offs} The positive and negative push\=/offs of the normal surface $F$; two arbitrary subsets $A$ and $B$ of $F$ are depicted, together with their push\=/offs $\pospushoff{A}\subs\boundary_0 M'$ and $\negpushoff{B}\subs\boundary_1 M'$.}
\end{myfigure}

When $M$ is described by a triangulation $\TTT$ and $F$ is normal, we can put more structure on $M\cut F$.
Let $T$ be a tetrahedron of $\TTT$.
A \emph{piece} of $M'$ is a component of $T\cap M'$ (here, we think of $T$ as an abstract tetrahedron).
A \emph{parallelity piece} of $M'$ is a piece of $M'$ that lies between two normal discs of the same type.
If a piece is not a parallelity piece, then we call it a \emph{gut piece}.
See \zcref{fig:cutting along a normal surface:parallelity and gut pieces} for an example of parallelity and gut pieces in a tetrahedron.
The union of all parallelity pieces in $M'$ is called the \emph{parallelity bundle}, and the union of all gut pieces is called the \emph{guts}.

\begin{myfigure}
\centering
\begin{subfigure}{0.45\textwidth}
\centering
\myfiguresource[scale=0.3]{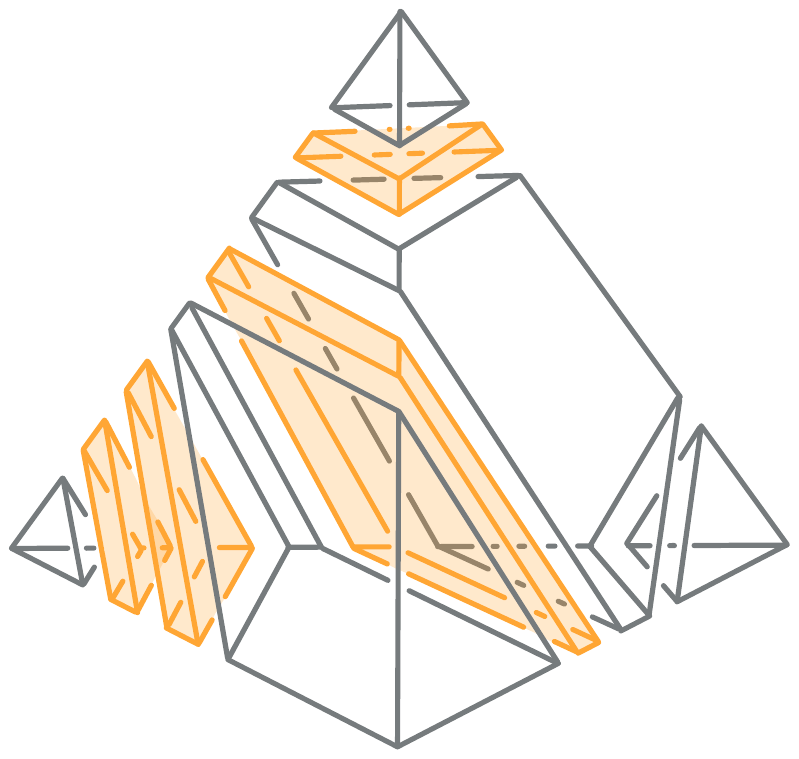}
\caption{}
\label{fig:cutting along a normal surface:parallelity pieces}
\end{subfigure}
\hfill
\begin{subfigure}{0.45\textwidth}
\centering
\myfiguresource[scale=0.3]{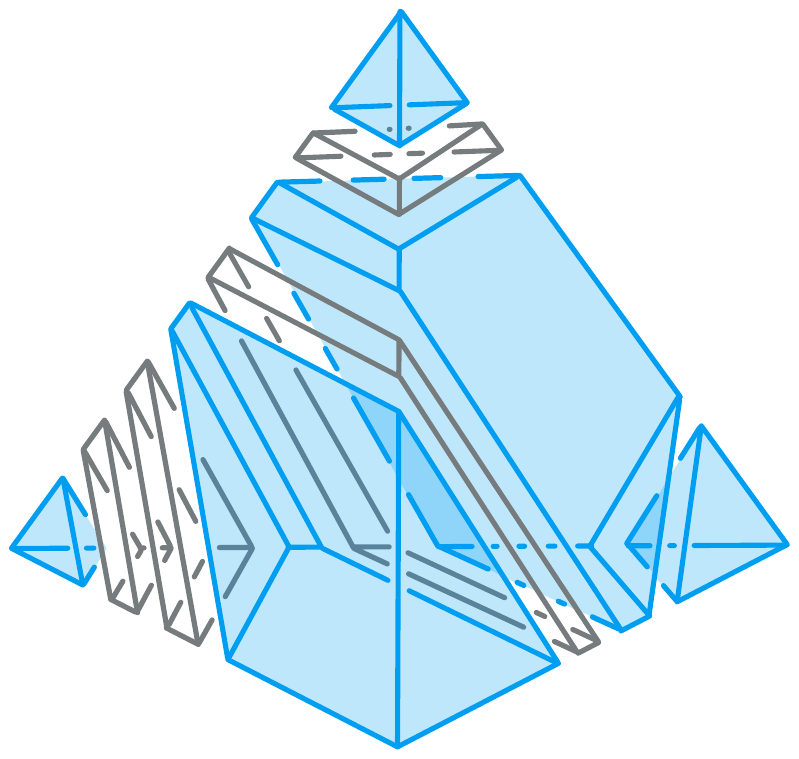}
\caption{}
\label{fig:cutting along a normal surface:gut pieces}
\end{subfigure}
\caption{\subref{fig:cutting along a normal surface:parallelity pieces} The parallelity pieces of $M'$ in a tetrahedron $T$ of $\TTT$ are the components of $T\cap M'$ that lie between two normal discs of the same type.
\subref{fig:cutting along a normal surface:gut pieces} The gut pieces of $M'$ in $T$ are the components of $T\cap M'$ that are not parallelity pieces.}
\label{fig:cutting along a normal surface:parallelity and gut pieces}
\end{myfigure}

Denote by $X$ the guts of $M'$, and by $Y$ the parallelity bundle.
Note that $X$ and $Y$ are sub\=/$3$\=/complexes of $M'$ (with the polyhedral cell structure whose cells are the pieces of $M'$), and the same holds for every component of $X$ and $Y$; we call a component of $X$ a \emph{gut component}, and a component of $Y$ a \emph{parallelity component}.
We remark that $Y$ is an interval bundle over some compact $2$\=/complex, since it is obtained by gluing together parallelity pieces, each of which is a product interval bundle over a normal disc.
Therefore, the submanifold $\thick{Y}$ of $M'$ is an interval bundle over some compact surface; we call this interval bundle structure the \emph{intrinsic interval bundle structure} of $\thick{Y}$.
Each component of $\thick{Y}$, endowed with its intrinsic interval bundle structure, is either a product interval bundle over an orientable surface, or a twisted interval bundle over a non\=/orientable surface; we will mostly be dealing with the former case, but we cannot a priori exclude the latter.
Note that, importantly, the intrinsic interval bundle structure of $\thick{Y}$ need not agree with the pre\=/sutured manifold structure of $M'$, in the sense that $\boundary_0\thick{Y}$ might intersect $\boundary_1 M'$ (or vice versa).
It is, however, true that
\[
\hboundary\thick{Y}=\thick{Y}\cap\hboundary M.
\]
There is some ambiguity in the choice of intrinsic interval bundle structure for $\thick{Y}$ -- namely, for each component $C$ of $\thick{Y}$, the surfaces $\boundary_0 C$ and $\boundary_1 C$ can be interchanged.
When $C$ is a product interval bundle intersecting both $\boundary_0 M$ and $\boundary_1 M$, we require that $\boundary_i C=\boundary i_M \cap C$ for $i\in\{0,1\}$.
Otherwise, we allow both choices.

Suppose -- as we will always implicitly assume -- that $\thin{X}$ is chosen so that
\[
\thin{X}\cap\thin{Y}=\boundary\thin{X}\cap\boundary\thin{Y}.
\]
Even though $\thin{X}$ is not necessarily an interval bundle, we can still endow it with a pre\=/sutured manifold structure, by setting
\begin{align*}
\boundary_0\thin{X}&=\thin{X}\cap\boundary_0 M',\\
\boundary_1\thin{X}&=\thin{X}\cap\boundary_1 M',\text{ and}\\
\vboundary\thin{X}&=\thin{X}\cap(\vboundary M'\cup \thick{Y}).
\end{align*}

The guts $X$ and parallelity bundle $Y$ of $M'$ admit pre\=/sutured subcomplex structures compatible with the pre\=/sutured manifold structures of $\thin{X}$ and $\thick{Y}$, defined by
\begin{align*}
\boundary_i X&=X\cap\boundary_i M'&\text{for $i\in\{0,1\}$,}\\
\boundary_i Y&=Y\cap\boundary_i\thick{Y}&\text{for $i\in\{0,1\}$.}
\end{align*}
We will always endow $X$ and $Y$ with these pre\=/sutured subcomplex structures, and $\abstr{X}$ and $\abstr{Y}$ with the induced pre\=/sutured manifold structures.
We call the pre\=/sutured structures on $Y$ and on $\abstr{Y}$ \emph{intrinsic}, again to emphasise that they need not be compatible with the pre\=/sutured manifold structure of $M'$.
We will also say that a subset $\Omega\subs\vboundary X$ is a vertical subset of $\vboundary X$ to mean that $\embedd[X]^{-1}(\Omega)$ is a vertical subset of $\vboundary\abstr{X}$, and similarly for $Y$.

\subsection{Triangulating the guts}
\label{sec:triangulating the guts}

Let us keep the notation of the previous paragraphs.
Recall that $F$ inherits a natural triangulation from $\TTT$.
We now describe how to triangulate the horizontal boundary and the guts of $M'$.
Firstly, note that $\hboundary M'$ is itself a normal surface, since it intersects each tetrahedron of $\TTT$ in a collection of normal discs; more precisely, there is a natural one\=/to\=/two correspondence between the normal discs of $F$ and the normal discs of $\hboundary M'$.
Therefore, we can triangulate $\hboundary M'$ as a normal surface, using $3$ triangles for each normal triangle and $4$ triangles for each normal quadrilateral.
With these canonical triangulations of $F$ and $\hboundary M'$, the positive and negative push\=/offs of $F$ can be naturally chosen to be simplicial isomorphisms.
More precisely, for each tetrahedron $T$ of $\TTT$, the positive push\=/off can be taken to map each normal disc $D\subs F\cap T$ to the normal disc $\boundary_0 M'\cap\NNN(D)$ simplicially; this is shown in \zcref{fig:triangulating the guts:simplicial push-off}.
A similar statement holds for the negative push\=/off.

In this setting, we can define the \emph{transfer map}.
This is a simplicial map, defined on the largest sub\=/$2$\=/complex of $F$ whose positive push\=/off is contained in $Y$.
To define it, let $D$ be a normal disc of $F$ contained in a tetrahedron $T$ of $\TTT$; suppose that $\pospushoff{D}$ is contained in a parallelity piece of $T$.
Then there is a unique normal disc $E$ in $T$, of the same type as $D$, such that either $\pospushoff{E}$ or $\negpushoff{E}$ cobounds a parallelity piece of $T$ with $\pospushoff{D}$.
Suppose, for concreteness, that $\pospushoff{E}$ cobounds a parallelity piece $P$ with $\pospushoff{D}$ (the case where $\pospushoff{E}$ is replaced by $\negpushoff{E}$ is analogous).
Let $\map{f}{\pospushoff{D}}{\pospushoff{E}}$ be the simplicial isomorphism induced by the interval bundle structure of $P$.
Then the transfer map $\transfermap$ sends a point $x\in D$ to the unique point $\transfermap(x)\in E$ such that
\[
\pospushoff{\transfermap(x)}=f(\pospushoff{x}).
\]

We now construct a triangulation of the guts of $M'$, building it piece by piece.
Let $P$ be a gut piece of $M'$; in other words, it is a component of $X\cap T$ for some (abstract) tetrahedron $T$ of $\TTT$.
We can think of $P$ as a polyhedron, with some faces coming from $P\cap\hboundary X$, and the other faces coming from $P\cap \boundary T$.
We triangulate the faces of $P\cap\hboundary X$ according to the triangulation of $\hboundary M'$ we have already defined, as shown in \zcref{fig:triangulating the guts:gut piece:1}.
Then, we note that $P\cap Y$ is a union of four\=/sided faces contained in $\boundary T$.
Some of these -- namely, those that intersect both $\boundary_0 M'$ and $\boundary_1 M'$ -- are vertical subsets of $\vboundary X$; denote the union of these faces by $V_P$.
We triangulate each face $R$ of $V_P$ by adding a diagonal edge.
In order to remove ambiguity, we describe how to pick one of the two possible diagonal edges consistently.
Let $e$ be the edge of $R$ lying on $\boundary_0X$.
Of the two edges of $R$ that are vertical in $\vboundary X$, let $e'$ be the one such that $e$ can be rotated counter\=/clockwise inside $R$ to reach $e'$; the counter\=/clockwise direction is taken by looking at $T$ from the outside (this is well\=/defined since $M$ is oriented).
The diagonal edge is then chosen so that one of its endpoints is $e\cap e'$, as depicted in \zcref{fig:triangulating the guts:gut piece:2}.

We then triangulate the remaining faces of $P$ -- that is, those in that are contained in $\boundary T$ but not in $V_P$ -- by coning over a vertex.
More precisely, let $t$ be the number of tetrahedra of $\TTT$.
We fix a total order on the vertices of the gut pieces of $M'$; here, we mean that if two vertices come from possibly different gut pieces but are identified in $M$, then they should only be counted once.
Since there are at most $6t$ gut pieces, independently of $F$, we can assume that this order is part of the data describing the triangulation $\TTT$.
Then, each relevant face of $P$ can be triangulated by coning over its smallest vertex, as shown in \zcref{fig:triangulating the guts:gut piece:3}.
Finally, we triangulate the interior of $P$ by coning the triangulation of $\boundary P$ over the smallest vertex of $P$.

\begin{myfigurepage}
\centering
\begin{subfigure}{0.45\textwidth}
\centering
\myfiguresource[scale=0.3]{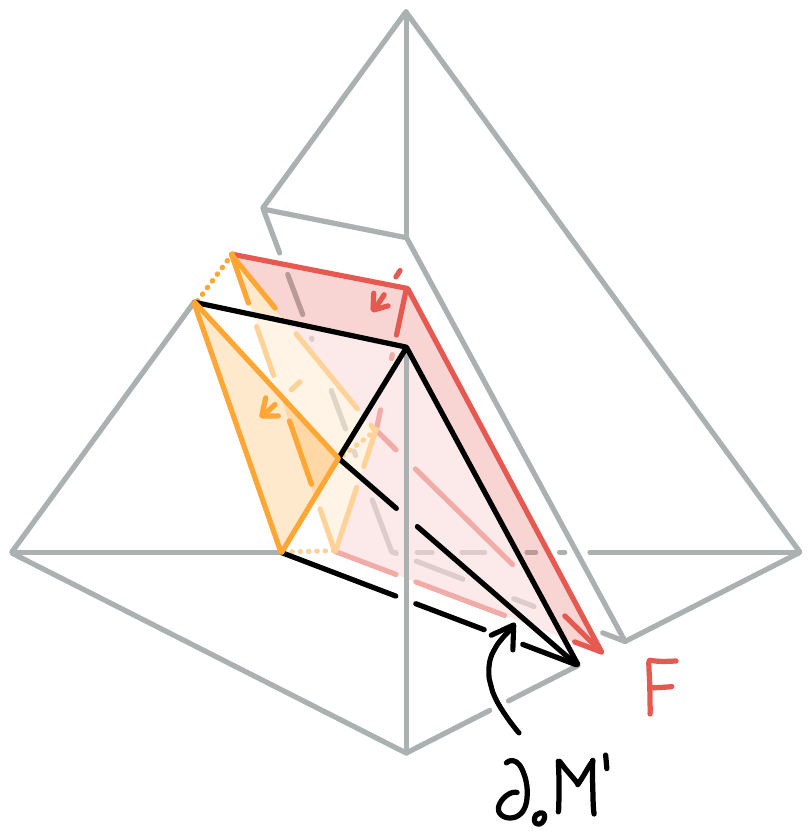}
\caption{}
\label{fig:triangulating the guts:simplicial push-off}
\end{subfigure}
\hfill
\begin{subfigure}{0.45\textwidth}
\centering
\myfiguresource[scale=0.3]{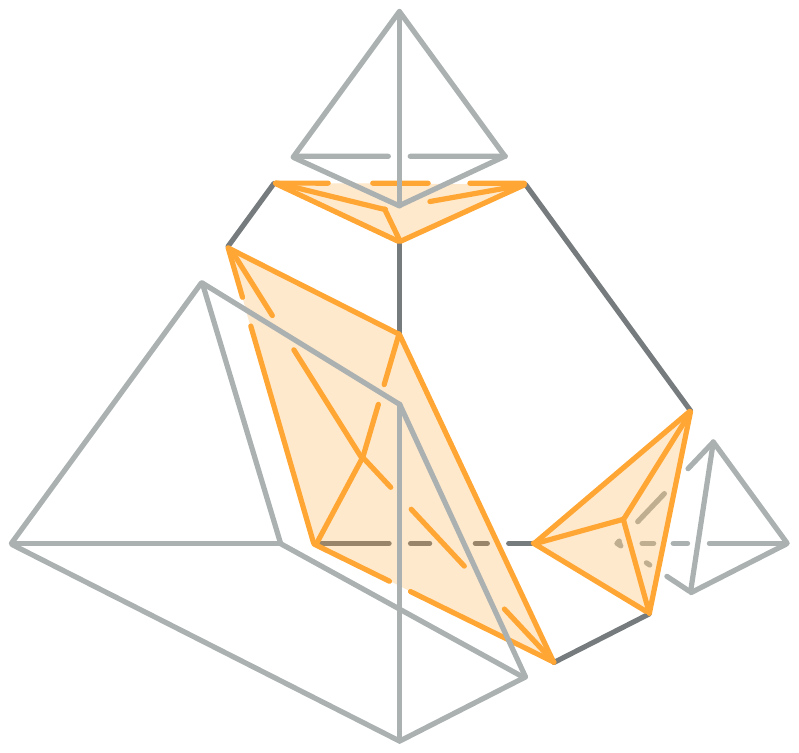}
\caption{}
\label{fig:triangulating the guts:gut piece:1}
\end{subfigure}\\[2em]
\begin{subfigure}{0.45\textwidth}
\centering
\myfiguresource[scale=0.3]{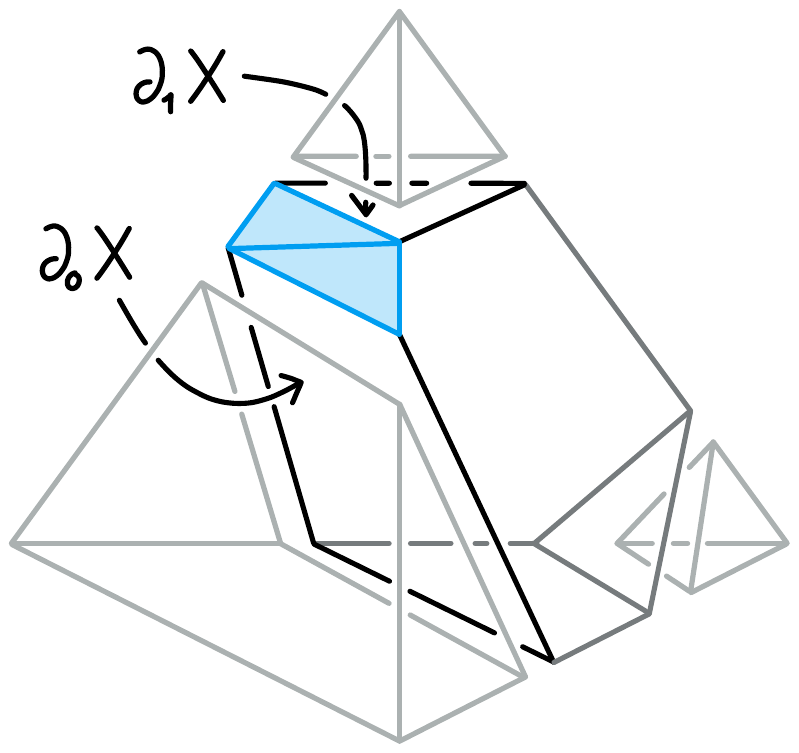}
\caption{}
\label{fig:triangulating the guts:gut piece:2}
\end{subfigure}
\hfill
\begin{subfigure}{0.45\textwidth}
\centering
\myfiguresource[scale=0.3]{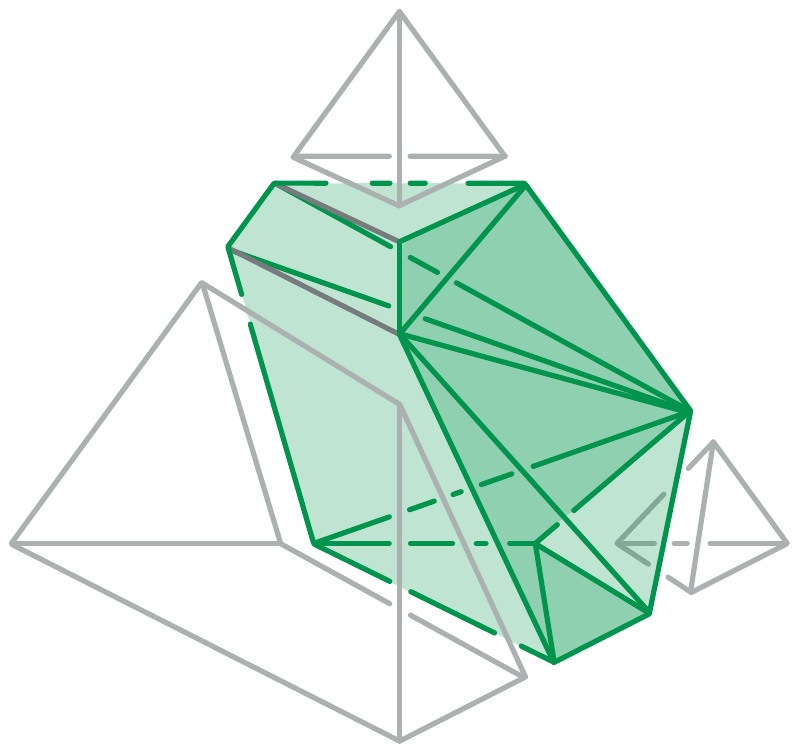}
\caption{}
\label{fig:triangulating the guts:gut piece:3}
\end{subfigure}
\caption{\subref{fig:triangulating the guts:simplicial push-off} The positive push\=/off of $F$ maps each normal disc of $F$ to a normal disc of $\boundary_0 M'$ simplicially. The arrow on $F$ represents the transverse orientation of $F$, while the arrow from a triangle of $F$ to a triangle of $\boundary_0 M'$ represents the positive push\=/off.
\subref{fig:triangulating the guts:gut piece:1} The faces of a gut piece that lie on $\hboundary X$ are triangulated according to the triangulation of $\hboundary M'$.
\subref{fig:triangulating the guts:gut piece:2} The faces of the gut piece that lie in $Y$ and are vertical in $\vboundary X$ are four\=/sided and triangulated with two triangles.
\subref{fig:triangulating the guts:gut piece:3} The remaining faces of the gut piece are triangulated by coning over vertices.}
\end{myfigurepage}

By repeating this construction for each gut piece of $M'$, we obtain a triangulation $\RRR$ of $\abstr{X}$.
Note that triangulations of different gut pieces are compatible along their common faces, since these faces are triangulated by coning over the same vertex.
By construction, the triangulation $\RRR$ is in fact a pre\=/sutured triangulation, compatible with the pre\=/sutured manifold structure of $\abstr{X}$.
This triangulation induces a pre\=/sutured triangulation of $X$, compatible with the pre\=/sutured sub\=/$3$\=/complex structure of $X$; we will blur the distinction between the two, thinking of $\RRR$ as both a triangulation of $\abstr{X}$ and a triangulation of $X$.
The restriction of $\RRR$ to $\hboundary\abstr{X}$ is flapless by \zcref{rmk:the triangulation of a normal surface is flapless}.
Moreover, for each gut piece $P$, the vertical subset $V_P$ of $\vboundary\abstr{X}$ is simplicial.

We have already remarked how the number of gut pieces of $M'$ is at most linear in $t$.
The following elementary result gives more granular bounds concerning the triangulation of the guts.

\begin{proposition}[Bounds on the triangulation of the guts]
\label{thm:triangulation of the guts is small}
Let $\TTT$ be a triangulation of a compact oriented $3$\=/manifold $M$ with $t$ tetrahedra.
Let $F$ be a transversely oriented normal surface in $M$, and let $M'=M\cut F$.
Then the pre\=/sutured triangulation $\RRR$ of the guts $X$ of $M'$ satisfies:
\begin{enumroman}
\item $\card{\RRR}\le 50t$;
\item $\area{\hboundary X}\le 32t$;
\item $\area{\vboundary X}\le 36t$;
\end{enumroman}
\end{proposition}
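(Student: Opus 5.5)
The proof is a counting argument carried out one tetrahedron at a time. All three quantities are sums over gut pieces, so I will fix a tetrahedron $T$ of $\TTT$ and bound the contribution of the gut pieces inside $T$, aiming for at most $50$ tetrahedra of $\RRR$, at most $32$ triangles of $\hboundary X$, and at most $36$ triangles of $\vboundary X$. Summing over the $t$ tetrahedra then gives (i)--(iii).

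\textbf{Step 1: the gut pieces in $T$.} I will first classify them using the structure of normal discs. In $T$ there is at most one quadrilateral type and at most four triangle types. The complement of the normal discs in $T$ splits into slabs between parallel discs of the same type, which are parallelity pieces, and at most $6$ gut pieces. These are:
\begin{itemize}
\item the four ``corner'' regions near the vertices, each lying beyond the outermost triangle of its type, or containing the vertex if there is no such triangle;
\item the central region;
\item when quadrilaterals are present, the central region may split into two pieces on either side of the quad stack.
\end{itemize}
This matches the bound of $6t$ gut pieces already used in the text. A corner region is a polyhedron cut off near a vertex. The central region is $T$ truncated by at most four triangles, or by at most two triangles and one quadrilateral on each side.

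\textbf{Step 2: the horizontal boundary, giving (ii).} Each gut piece meets $\hboundary X$ in the push-offs of the outermost normal discs facing it. The total number of such faces in $T$ is bounded by the number of disc-type sides. There are $4$ triangle types, each with two sides, contributing at most $3$ triangles per side. The quadrilateral type has two sides, contributing at most $4$ triangles per side. Counting carefully which sides actually face gut pieces should give $4\cdot 2\cdot 3+2\cdot 4=32$ triangles. That is the bound in (ii).

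\textbf{Step 3: the vertical boundary, giving (iii).} Each face of a gut piece lying in $Y$ is a four-sided face on a face of $\partial T$, sitting between two consecutive normal arcs of the same type. I will count these faces per face of $T$. Each of the $4$ faces of $T$ has at most $3$ normal arc types, and each arc type adjoins gut pieces on at most two sides. This gives a bounded number of quadrilaterals, each triangulated with $2$ triangles, and I will try to land the count at $36$.

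\textbf{Step 4: the tetrahedra, giving (i).} Each gut piece is triangulated by coning its boundary triangulation to a vertex. So $\card{\RRR}$ restricted to $T$ is at most the total number of boundary triangles of the gut pieces in $T$, minus those containing the cone vertex. The boundary triangles come from three sources: the $\hboundary$ faces, the vertical faces, and the remaining faces in $\partial T$, where each $k$-gon is coned into $k-2$ triangles. Faces lying in $\partial T$ are not shared between two gut pieces within the same $T$, so there is no double counting here. Subtracting the triangles that contain the cone vertex should bring the total down to $50$.

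The main obstacle is bookkeeping: for each configuration of discs (with and without a quadrilateral type, and with various triangle types absent), I have to bound the number of polygon sides so that the explicit constants $32$, $36$ and $50$ really hold. I would handle this by identifying the worst-case configuration, which is presumably the one where a quadrilateral type and all four triangle types are present, and then checking the totals there.
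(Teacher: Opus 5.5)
Your overall strategy matches the paper's: you bound everything per tetrahedron $T$ and use that each gut piece is coned from one of its vertices. Your Step~2 is essentially the paper's count. $\hboundary X\cap T$ consists of the two extreme push\=/offs of each disc type present, i.e.\ $n_t\le 8$ normal triangles and $n_q\le 2$ quadrilaterals, giving $3n_t+4n_q\le 32$.

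Steps~3 and~4, however, are not carried out (``I will try to land the count at 36'', ``should bring the total down to 50''), and Step~3 is set up incorrectly. The vertical boundary $\vboundary X$ is not just the four\=/sided faces against $Y$. It also contains every gut face lying in $\boundary M$, and these can be polygons with many sides: if a face $R$ of $T$ lies in $\boundary M$, the corner triangles and the central polygon all lie in $\vboundary X$. Your tally of two quadrilaterals per arc type per face would also give $4\cdot 3\cdot 2\cdot 2=48$, not $36$. In fact, seen from $T$, each face carries at most one four\=/sided gut face between parallel arcs: the one between the triangle stack and the quadrilateral stack of the single arc type that the quadrilaterals meet. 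So counting such quadrilaterals cannot be what produces the constant.

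The missing ingredient is a per\=/face count that controls \emph{all} gut faces on $\boundary T$ at once, whatever lies across the face.
\begin{itemize}
\item In a face $R$ of $T$, $\hboundary X\cap R$ consists of $m\le 8$ normal arcs: at most two from each of the three triangle types at the vertices of $R$, plus two quadrilateral arcs.
\item These arcs bound $m/2$ strips. Their complement in $R$ consists of $1+m/2$ gut polygons with $3+2m$ sides in total.
\item Coning therefore yields $1+m\le 9$ triangles per face, which gives (iii) with $36=4\cdot 9$.
\item Summing over the four faces, with $\sum_R m_R=3n_t+4n_q$, the gut pieces of $T$ have $4+6n_t+8n_q$ boundary triangles.
\end{itemize}

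For (i) you then need two further facts.
\begin{itemize}
\item The number of gut pieces in $T$ is exactly $1+n_t/2+n_q/2$. Your Step~1 overcounts this, since a corner piece exists only if triangles of that type are present. For the subtraction you need this lower bound, not the upper bound $6$.
\item Each cone vertex lies in at least three boundary triangles.
\end{itemize}
Together these give at most $1+\frac{9}{2}n_t+\frac{13}{2}n_q\le 50$ tetrahedra.

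The coupling between the number of boundary triangles and the number of pieces is essential. Bounding the horizontal and $\boundary T$ contributions separately by $32$ and $36$ and subtracting $3$ for a single piece only gives $65$. Likewise, checking only the ``presumably'' worst configuration is justified only once you have a formula like this one that is monotone in $n_t$ and $n_q$.
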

\begin{proof}
Fix an (abstract) tetrahedron $T$ of $\TTT$.
It is easy to see that $\hboundary X\cap T$ is a union of $n_t\le8$ normal triangles ($0$ or $2$ per vertex of $T$) and $n_q\in\{0,2\}$ normal quadrilaterals.
As a consequence, the portion of $\hboundary X$ contained in $T$ is triangulated with
\[
3n_t+4n_q\le32
\]
triangles; this proves the second claim.

Let us now focus on a face $R$ of $T$.
Denote by $r$ the intersection $\hboundary X\cap R$.
By our previous remark on $\hboundary X\cap T$, we see that $r$ is a union of $m\le 8$ normal arcs in $R$.
A simple counting argument reveals that $X\cap R$ has $1+m/2$ polygonal components, and the total number of sides of these components is $3+2m$.
Since each component of $X\cap R$ is triangulated by coning over a vertex, the total number of triangles in $X\cap R$ is
\begin{equation}
\label{eqn:triangulation of the guts is small:triangles in R}
\area{X\cap R}=3+2m-2(1+m/2)=1+m\le 9.
\end{equation}
Since each tetrahedron has $4$ faces, and $\vboundary X\subs\TTT^{(2)}$, this proves the third claim.

Finally, let us shift our attention back to the (arbitrary) tetrahedron $T$.
A counting argument similar to the above shows that $X\cap T$ has $1+n_t/2+n_q/2$ polyhedral components -- namely, the gut pieces of $T$.
We have already seen that $\hboundary X\cap T$ is triangulated with $3n_t+4n_q$ triangles.
The intersection $\hboundary X\cap\boundary T$ consists of $3n_t+4n_q$ normal arcs.
By applying \zcref{eqn:triangulation of the guts is small:triangles in R} to each face of $T$, we see that $X\cap\boundary T$ is triangulated with $4+3n_t+4n_q$ triangles.
Therefore, the total number of triangles in the triangulation of $\boundary(X\cap T)$ is
\[
(3n_t+4n_q)+(4+3n_t+4n_q)=4+6n_t+8n_q.
\]
Since each vertex of the triangulation of $X\cap T$ has degree at least $3$, and each component of $X\cap T$ is triangulated by coning the triangulation of its boundary over a vertex, we conclude that the number of tetrahedra of $X\cap T$ is at most
\begin{align*}
\area{\boundary(X\cap T)}-3\card{X\cap T}&=4+6n_t+8n_q-3\left(1+\frac{1}{2}n_t+\frac{1}{2}n_q\right)\\
&=1+\frac{9}{2}n_t+\frac{13}{2}n_q\le1+36+13=50.
\end{align*}
This concludes the proof of the first claim.
\end{proof}

We remark that the construction of the triangulation $\RRR$ of $\abstr{X}$ is very explicit.
Given the bounds of \zcref{thm:triangulation of the guts is small}, we can construct this triangulation algorithmically, in polynomial time in $t$ and $\log w(F)$.

Finally, we introduce some notation for neighbourhoods of sub\=/$2$\=/complexes of $F$ in $M'$.
Let $T$ be a triangle of $F$.
If $\pospushoff{T}$ is contained in a parallelity piece of $M'$, then we write $\posnbhd{T}$ for the unique parallelity piece containing $\pospushoff{T}$.
If, instead, the positive push\=/off of $T$ is contained in a gut piece of $M'$, then we write $\posnbhd{T}$ for the unique gut component containing $\pospushoff{T}$.
For a sub\=/$2$\=/complex $G$ of $F$, we let
\[
\posnbhd{G}=\bigcup_{T\subs G}\posnbhd{T},
\]
where the union is taken over all triangles $T$ of $G$; note that $\posnbhd{G}$ is a sub\=/$3$\=/complex of $M'$.
An analogous definition is given for $\negnbhd{G}$ in terms of the negative push\=/off.

\subsection{Parallelity bundle of least-weight fibres}

\begin{definition}[Vertical surface in interval bundle]
Let $F$ be a compact orientable surface.
A surface $A$ properly embedded in $F\times[0,1]$ is \emph{vertical} if it can be isotoped preserving $\boundary F\times[0,1]$ to a union of interval fibres of the form $\{x\}\times[0,1]$.
\end{definition}

In the context of the proof of \zcref{thm:parallelity bundle of a least-weight fibre}, we will need to work in the general setting of \emph{$3$\=/manifolds with boundary pattern}, of which we recall the fundamentals here; we refer the reader to \cite[Section~3.3.2]{matveev:algorithmic-topology-classification} for the full details.
Let $M$ be a compact $3$\=/manifold, and let $\Gamma\subs\boundary M$ be a union of disjoint curves, called the \emph{boundary pattern}.
In fact, the boundary pattern $\Gamma$ is allowed to be an arbitrary graph, but we will not need this level of generality here.
A subset of $M$ is \emph{clean} if it is disjoint from $\Gamma$.
A surface $F\subs M$ is said to be \emph{properly embedded} in $(M,\Gamma)$ if it is properly embedded in $M$ and transverse to $\Gamma$.
A boundary\=/compressing disc $D$ is \emph{trivial} if $\boundary D\cap F$ cuts off a clean disc from $F$.
If $F$ admits a non\=/trivial clean boundary\=/compressing disc in $M$, we say that it is \emph{boundary\=/compressible} in $(M,\Gamma)$; otherwise, it is \emph{boundary\=/incompressible} in $(M,\Gamma)$.

\begin{lemma}[Essential surfaces in interval bundles are vertical]
\label{thm:essential surfaces in interval bundles are vertical}
Let $M=F\times[0,1]$ for some compact orientable surface $F$, and consider the boundary pattern $\Gamma=\boundary F\times\{0,1\}$.
Let $A$ be a surface properly embedded in $(M,\Gamma)$ that is incompressible and boundary\=/incompressible.
Suppose that no component of $A$ is a surface without boundary or a disc intersecting $\Gamma$ at most twice, and that no component of $\boundary A$ is contained in $\vboundary M$.
Then $A$ is vertical in $M$.
\end{lemma}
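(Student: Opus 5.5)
We prove the lemma by induction on a complexity of $F$ (say, the number of arcs in a fixed system of properly embedded arcs cutting $F$ into a disc), following the classical argument of Waldhausen for incompressible surfaces in product $I$\=/bundles; compare \cite[Section~3.3.2]{matveev:algorithmic-topology-classification}. Throughout, the boundary pattern $\Gamma=\boundary F\times\{0,1\}$ enters in two ways. It lets us use clean boundary\=/compressing discs whose arcs lie in the horizontal boundary or in the vertical boundary. It also rules out the degenerate components excluded by the hypotheses: closed surfaces, discs meeting $\Gamma$ at most twice, and boundary curves contained in $\vboundary M$.

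\emph{Base case: $F$ a disc.} Here $M$ is a ball and $\boundary M$ is a sphere decomposed by $\Gamma$ into two horizontal discs and a vertical annulus. Since $A$ is incompressible and $M$ is irreducible, every component of $A$ is a disc. Its boundary is a curve on the sphere $\boundary M$, and it meets $\Gamma$ in at least four points because of the hypotheses. We want to show that $\boundary A$ meets each of the two circles of $\Gamma$ exactly twice, crossing the vertical annulus as two vertical arcs. Suppose instead that $\boundary A$ contains an arc of $\boundary A\cap\vboundary M$ with both endpoints on the same circle of $\Gamma$, or an arc of $\boundary A\cap\hboundary M$ that is inessential in the horizontal disc relative to $\Gamma$. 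Take an outermost such arc. It cobounds with $\Gamma$ a clean bigon in $\boundary M$, and this bigon gives a clean boundary\=/compressing disc for $A$. The compression is non\=/trivial because $\boundary A$ meets $\Gamma$ at least four times, so $A$ would be boundary\=/compressible, a contradiction. It follows that every arc in the vertical annulus is vertical and every horizontal arc is a chord joining two points of $\Gamma$. Each horizontal disc is cut by these chords, so consecutive vertical arcs alternate, and incompressibility together with the fact that no curve of $\boundary A$ lies in $\vboundary M$ forces exactly two vertical arcs per component. Such a disc is isotopic to $\alpha\times[0,1]$ for a properly embedded arc $\alpha$ of $F$, that is, it is vertical.

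\emph{Inductive step.} Choose an essential properly embedded arc $\alpha$ in $F$ (or an essential simple closed curve, if $F$ is closed or an annulus; the closed case would be handled similarly with a vertical torus or annulus). Let $V=\alpha\times[0,1]$, a vertical disc, and put $A$ in general position with respect to $V$. First we remove simple closed curves of $A\cap V$, using incompressibility of $A$ and irreducibility of $M$. Next we remove inessential arcs of $A\cap V$ via outermost\=/arc arguments in $V$. An outermost inessential arc cuts off a disc of $V$ that is a clean boundary\=/compressing disc for $A$. Boundary\=/incompressibility says this disc is trivial, so it cobounds a clean disc with $A$, and an isotopy of $A$ (which preserves $\vboundary M$ setwise) removes the arc. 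After these moves every arc of $A\cap V$ is vertical in $V$, so $A\cap V$ is a union of vertical arcs. Now cut $M$ along $V$ to obtain $(F\cut\alpha)\times[0,1]$, with boundary pattern extended by the two copies of $\boundary\alpha\times\{0,1\}$. The cut surface $A\cut(A\cap V)$ is still incompressible and boundary\=/incompressible, and it still satisfies the hypotheses: new boundary curves consist partly of vertical arcs, so none lies in $\vboundary$, and no new low\=/intersection discs arise, since a disc meeting $\Gamma$ at most twice would give a boundary compression back in $M$. By induction the cut surface is vertical. Regluing along vertical arcs in $V$ matches fibres, after adjusting the isotopies to agree on $V$, which can be done because the induction is applied to a fibre structure compatible with $V$. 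Hence $A$ is vertical.

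The main obstacle is the bookkeeping in the inductive step. We must check that after cutting, the hypotheses (boundary\=/incompressibility relative to the enlarged pattern, and no bad disc components) are preserved. We must also make the isotopies on the two sides compatible on $V$. I would handle the second point by strengthening the inductive statement to allow prescribing that the isotopy fixes a given vertical subsurface of $\vboundary M$ on which $A$ is already vertical.
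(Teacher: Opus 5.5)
Your route differs from the paper's, which uses no induction. The paper shows by an outermost-arc argument that every component of $A\cap\vboundary M$ is a vertical arc. It then checks that $A$ is essential in Johannson's sense: a boundary-compressing disc meeting $\Gamma$ once is enlarged by a disc of $\vboundary M$ to a clean one. Finally it quotes \cite[Proposition~5.6]{johannson:homotopy-equivalences-3}.

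Your hierarchy argument has a genuine gap at exactly the point you flagged. You claim that $A\cut(A\cap V)$ is still boundary\=/incompressible in $(F\cut\alpha)\times[0,1]$ with respect to the pattern $\boundary(F\cut\alpha)\times\{0,1\}$, the only pattern for which the lemma is available as inductive hypothesis. This claim is false, and your base case shows it.

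Let $F$ be a disc and let $A$ be a disc component meeting $\Gamma$ at least four times. Take an arc $\delta$ of $\boundary A\cap(F\times\{0\})$ that is outermost in $F\times\{0\}$; it cuts off a half\=/disc $H$ with $\boundary H=\delta\cup\gamma$ and $\gamma\subset\Gamma$. Put $A$ in product position in a collar of $\boundary M$. Then $H\times\{\varepsilon\}$ is a clean disc meeting $A$ in an arc parallel to $\delta$, and meeting $\vboundary M$ in an arc parallel to $\gamma$. That arc of $A$ separates the two endpoints of $\delta$ from the remaining (at least two) points of $\boundary A\cap\Gamma$, so the compression is non\=/trivial.

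Hence no non\=/empty surface in $D^2\times[0,1]$ satisfies the hypotheses. The vertical squares your base case ends with are boundary\=/compressible. Your step ``incompressibility forces exactly two vertical arcs'' has no content, since every disc is incompressible. Cutting a non\=/empty $A$ along a complete arc system of $F$ always leaves a non\=/empty surface in copies of $D^2\times[0,1]$. So the hypotheses must be lost at some stage of your induction, whatever choices you make.

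A concrete instance: let $A=\beta\times[0,1]$ with $\beta$ a parallel copy of the essential arc $\alpha$. Then $A$ is essential in $M$ and $A\cap V=\emptyset$. After cutting, let $E$ be the rectangle between $\beta$ and one copy of $\alpha$. Then $E\times\{\varepsilon\}$ is a clean, non\=/trivial boundary\=/compressing disc whose boundary arc runs along the new vertical boundary. In $M$ that arc passes through $V$, in the interior of $M$, so it gives no compression of $A$ there.

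The repair would be to enlarge the pattern after each cut by the vertical edges of the copies of $V$, so that discs like $E\times\{\varepsilon\}$ are no longer clean. You would also have to replace clean boundary\=/incompressibility by Johannson's notion, where boundary\=/compressing discs meeting the pattern once must cut off discs meeting it at most once. But then your inductive statement is no longer the lemma. Formulating and proving it amounts to reproving Johannson's proposition, which the paper avoids by citing it.

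There are also smaller problems.
\begin{itemize}
\item The ``bigon'' cut off by an outermost vertical arc contains an arc of $\Gamma$. It is not clean and must first be pushed off $\Gamma$ into $\hboundary M$.
\item In the inductive step you need, but only prove in the base case, that $\boundary A\cap\vboundary M$ consists of vertical arcs. This is what lets you choose $\boundary\alpha$ disjoint from $\boundary A$, so that no arc of $A\cap V$ cuts off a corner of $V$.
\item When $F$ is an annulus, cutting along the core curve does not reduce complexity, so you should use a spanning arc instead.
\end{itemize}
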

\begin{proof}
We can assume without loss of generality that $A$ is connected, and moreover that $M$ is connected.
If any component of $A\cap\vboundary M$ is an arc with both endpoints on $\boundary_0 M$ or $\boundary_1 M$, then we immediately see that $A$ is boundary\=/compressible.
Therefore, every component of $A\cap\vboundary M$ is a vertical arc.

We now wish to invoke \cite[Proposition~5.6]{johannson:homotopy-equivalences-3} to conclude that $A$ must be vertical.
To this aim, we must first translate the definition of ``essential'' from \cite[Definition~3.1]{johannson:homotopy-equivalences-3} to our setting.
We say that a surface $S$ properly embedded in $(M,\Gamma)$ is \emph{essential in the sense of \cite{johannson:homotopy-equivalences-3}} if
\begin{itemize}
\item $S$ is incompressible in $M$, and
\item for any (not necessarily clean) boundary\=/compressing disc $D$ for $S$ in $M$ that intersects $\Gamma$ at most once, the arc $D\cap S\subs S$ cuts a disc off of $S$ that intersects $\Gamma$ at most once.
\end{itemize}
Then \cite[Proposition~5.6]{johannson:homotopy-equivalences-3} implies the following: if $F$ is not a sphere or a disc, then any surface $S$ properly embedded in $(M,\Gamma)$ that is essential in the sense of \cite{johannson:homotopy-equivalences-3} is horizontal or vertical in $M$, provided that no component of $S$ is a sphere or a disc intersecting $\Gamma$ at most three times; in fact, ``three'' can be replaced by ``two'' here, since $S$ will necessarily intersect $\Gamma$ an even number of times.
Note that, if $F$ were a sphere or a disc, then $A$ would necessarily be compressible or boundary\=/compressible.
Moreover, the surface $A$ cannot be horizontal, because $\boundary A$ intersects $\hboundary M$ by assumption.

Therefore, all we need to show is that $A$ is essential in the sense of \cite{johannson:homotopy-equivalences-3}.
We already know that $A$ is incompressible in $M$.
Let $D$ be a boundary\=/compressing disc for $A$ in $M$ that intersects $\Gamma$ at most once.
If $D$ is clean, then by boundary\=/incompressibility of $A$ we know that $D\cap A$ cuts a clean disc off of $A$.
Otherwise, the boundary of $D$ is naturally split into three non\=/empty arcs: let $a=D\cap A$, $b_v=D\cap\vboundary M$, and $b_0=D\cap\hboundary M$.
Without loss of generality, assume that $b_0\subs\boundary_0 M$.
Let $c$ be the subarc of $A\cap\vboundary M$ that connects an endpoint of $b_v$ to $\boundary_0 M$; the existence of such a subarc is guaranteed by our previous observation that every component of $A\cap\vboundary M$ is a vertical arc.
This situation is depicted in \zcref{fig:essential surfaces in interval bundles are vertical:a}.

The arc $b_v\cup c$ cuts a disc $D_v$ off of $\vboundary M$.
The disc $D\cup D_v$ can be isotoped slightly off of $\vboundary M$ to a clean boundary\=/compressing disc $D'$ for $A$.
Since $A$ is boundary\=/incompressible, we deduce that the arc $D'\cap A$ cuts a clean disc off of $A$.
It is then easy to conclude (see \zcref{fig:essential surfaces in interval bundles are vertical:b}) that $a$ cuts a disc off of $A$ that intersects $\Gamma$ exactly once -- in particular, the point of intersection is $\boundary c\setminus b_v$.
This concludes the proof, since $A$ is essential in the sense of \cite{johannson:homotopy-equivalences-3} and therefore vertical in $M$.\qedhere

\begin{myfigure}
\begin{subfigure}{0.45\textwidth}
\centering
\myfiguresource[scale=.3]{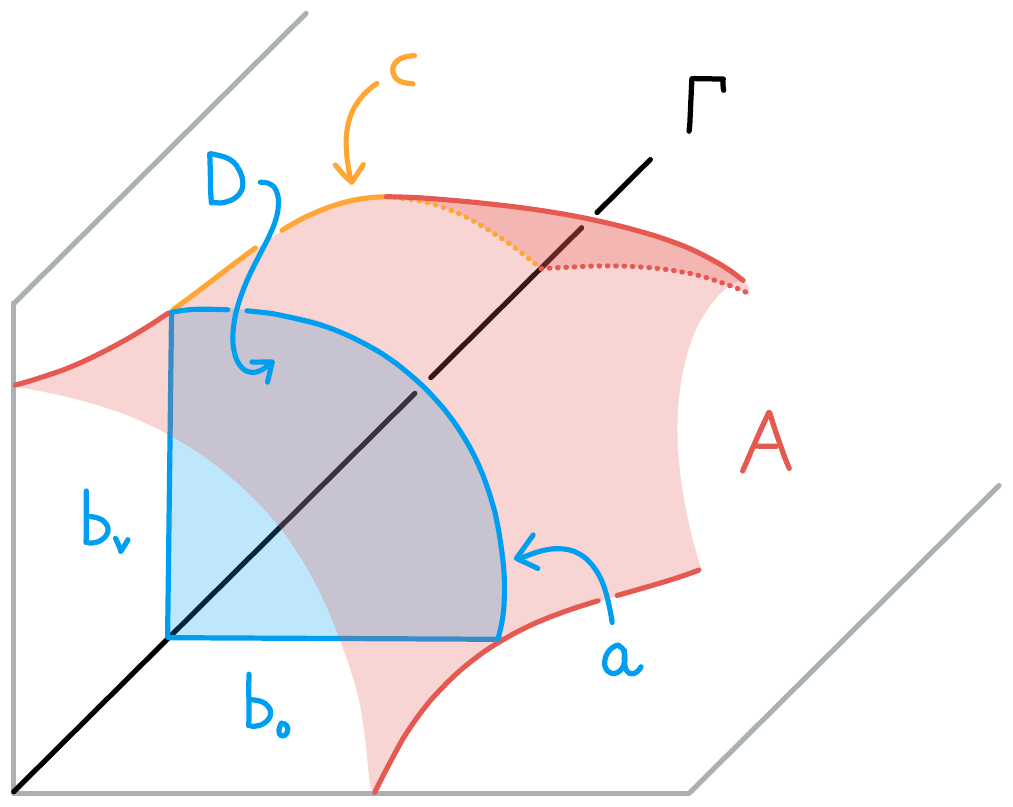}
\caption{}
\label{fig:essential surfaces in interval bundles are vertical:a}
\end{subfigure}\hfill
\begin{subfigure}{0.45\textwidth}
\centering
\myfiguresource[scale=.3]{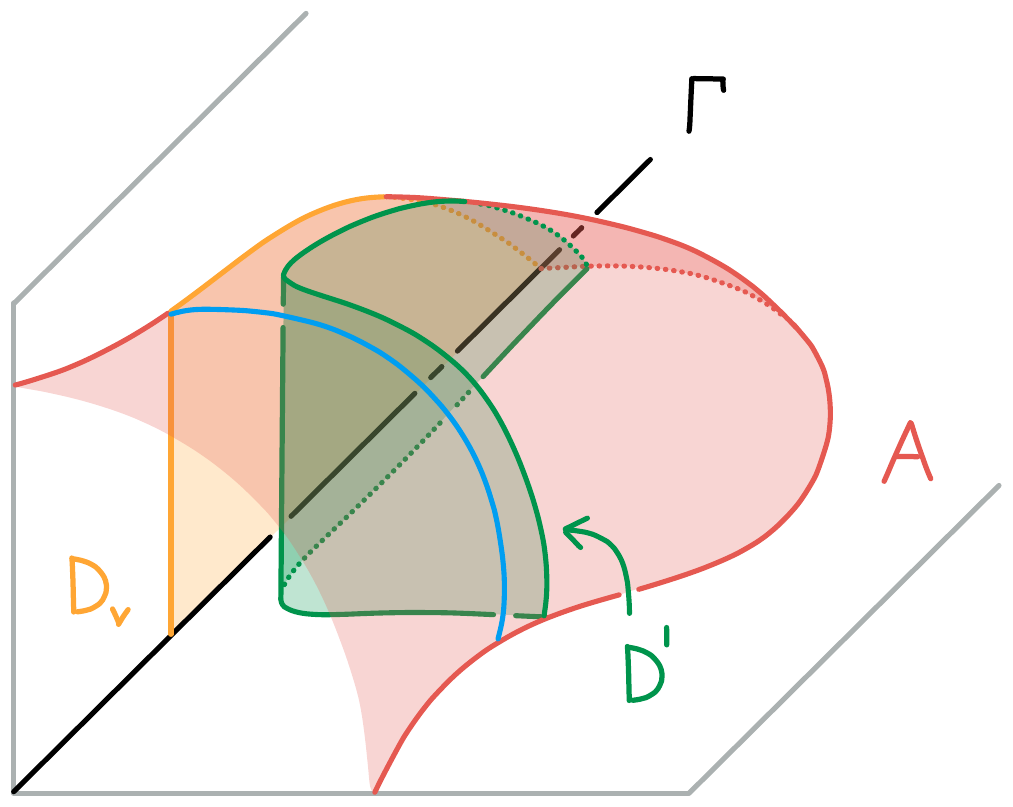}
\caption{}
\label{fig:essential surfaces in interval bundles are vertical:b}
\end{subfigure}
\caption{\subref{fig:essential surfaces in interval bundles are vertical:a} The boundary\=/compressing disc $D$ for $A$ intersects $\Gamma$ at most once.
\subref{fig:essential surfaces in interval bundles are vertical:b} The disc $D\cup D_v$ can be isotoped to a clean boundary\=/compressing disc $D'$ for $A$.}
\end{myfigure}
\end{proof}

\begin{definition}[Square in an interval bundle]
Let $M=F\times[0,1]$ for some compact orientable surface $F$.
A \emph{square} is a disc properly embedded in $M$ that intersects $\boundary F\times\{0,1\}$ transversely and exactly four times.
\end{definition}

\begin{definition}[{Cutting a $D^2\times[0,1]$ off of an interval bundle}]
Let $M=F\times[0,1]$ for some compact orientable surface $F$.
We say that an annulus or square $A$ properly embedded in $M$ \emph{cuts a $D^2\times[0,1]$ off of $M$} if there is an embedding $\map{f}{D^2\times[0,1]}{M}$ such that
\[
f(D^2\times\{0,1\})\subs\hboundary M\quad\text{and}\quad A\subs f(\boundary D^2\times[0,1])\subs A\cup\vboundary M,
\]
and moreover $f^{-1}(A)$ is vertical in $D^2\times[0,1]$.
We refer to the image of $f$ as a $D^2\times[0,1]$ cut off of $M$ by $A$.
\end{definition}

There are essentially four ways in which an annulus or square can cut a $D^2\times[0,1]$ off of $M$, as shown in \zcref{fig:cutting a disc cross I off}.

\begin{myfigure}
\foreach \i in {1,2,3,4}{%
\begin{subfigure}{0.24\textwidth}
\centering
\myfiguresource[scale=.25]{cutting-a-disc-cross-I-off-\i}
\caption{}
\label{fig:cutting a disc cross I off:\i}
\end{subfigure}
\ifnumcomp{\i}{<}{4}{\hfill}{}%
}
\caption{\subref{fig:cutting a disc cross I off:1} An annulus cutting a (possibly knotted) $D^2\times[0,1]$ off of $M$ that connects $\boundary_0 M$ and $\boundary_1 M$.
\subref{fig:cutting a disc cross I off:2} An annulus cutting a (possibly knotted) $D^2\times[0,1]$ off of $M$ that connects $\boundary_0 M$ (or $\boundary_1 M$) to itself.
\subref{fig:cutting a disc cross I off:3} A square cutting a $D^2\times[0,1]$ off of $M$ that connects $\boundary_0 M$ and $\boundary_1 M$.
\subref{fig:cutting a disc cross I off:4} A square cutting a $D^2\times[0,1]$ off of $M$ that connects $\boundary_0 M$ (or $\boundary_1 M$) to itself.}
\label{fig:cutting a disc cross I off}
\end{myfigure}

\begin{proposition}[Parallelity bundle of a least\=/weight fibre]
\label{thm:parallelity bundle of a least-weight fibre}
Let $\TTT$ be a triangulation of a compact connected orientable $3$\=/manifold $M$ that fibres over the circle, and let $F$ be a least\=/weight normal fibre of $M$.
Denote by $M'$ the $3$\=/manifold $M\cut F\homeo F\times[0,1]$, by $X$ the guts of $M'$, and by $Y$ the parallelity bundle of $M'$.
Let $A$ be a component of $\thin{X}\cap\thick{Y}$.
Then $A$ is vertical in $M'$ or cuts a $D^2\times[0,1]$ off of $M'$.
\end{proposition}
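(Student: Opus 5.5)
The plan is to show that each component $A$ of $\thin{X}\cap\thick{Y}$ is an essential surface in $(M',\Gamma)$, with $\Gamma=\boundary F\times\{0,1\}$, or else yields a weight-reducing isotopy of $F$. In the essential case I will invoke \zcref{thm:essential surfaces in interval bundles are vertical} to conclude verticality. In the inessential case I will use least-weight minimality of $F$ (annular simplifications in the spirit of Lackenby--Purcell) to show that $A$ cuts a $D^2\times[0,1]$ off.

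First I will set up the topology of $A$. Since $A\subs\vboundary\thin{X}$ lies in the frontier of $\thick{Y}$, it is a component of the vertical boundary of the intrinsic interval bundle $\thick{Y}$. Hence it is either an annulus, made of vertical fibres of the intrinsic structure, or a square: a disc whose boundary alternates between $\hboundary M'$ and $\vboundary M'$ in four arcs, occurring when the vertical boundary component of $\thick{Y}$ meets $\boundary M$. In either case $A$ is properly embedded in $M'$ with $\boundary A\cap\hboundary M'$ consisting of two curves or two arcs. No component of $\boundary A$ lies in $\vboundary M'$, and a square meets $\Gamma$ exactly four times. So the side hypotheses of \zcref{thm:essential surfaces in interval bundles are vertical} hold automatically, and only incompressibility and boundary-incompressibility in $(M',\Gamma)$ remain.

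Second, I will handle compressions. If an annulus $A$ compresses in $M'\homeo F\times[0,1]$, then each boundary curve of $A$ bounds a disc in $\hboundary M'$, since $\hboundary M'$ is incompressible in the product. I will then argue that $A$, together with these discs, bounds a ball in $M'$ (using irreducibility of $F\times[0,1]$), giving the knotted pictures (a) or (b) of \zcref{fig:cutting a disc cross I off}. For clean boundary-compressions of an annulus, the compression turns it into a disc with boundary running between $\boundary_0$ and $\boundary_1$ or returning to one side. Similarly, an inessential square has its two horizontal arcs cutting clean discs off $\hboundary M'$, giving pictures (c) or (d). One orientation case needs attention: an annulus whose two boundary curves both lie on $\boundary_0M'$ and are essential there. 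In $F\times[0,1]$ such a curve pair forces $A$ to be boundary-parallel into $\boundary_0 M'$.

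Third, and I expect this to be the main obstacle, I must rule out this boundary-parallel-but-not-product configuration using least weight. The claim is that if $A$ is parallel into $\hboundary M'$ through a region $W$ with $W\cap X$ nonempty, then one can isotope $F$ across $W$. Concretely, I will replace the part of $F$ corresponding to $\hboundary\thick{Y}$ on one side with the portion on the other side, as in an annular simplification. This strictly reduces weight, because the guts region swept across contains normal discs meeting the $1$-skeleton, while the parallelity part contributes equally on both sides via the transfer map. This contradicts least weight. The delicate points are checking that the replacement surface is isotopic to $F$ (which uses that $M'$ is a product, so parallel regions are products) and that the weight comparison is strict. The same argument applies to a square parallel into $\hboundary M'$. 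Every remaining case is either essential, hence vertical by \zcref{thm:essential surfaces in interval bundles are vertical}, or of the $D^2\times[0,1]$ type, which proves the proposition.
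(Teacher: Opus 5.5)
Your overall strategy matches the paper's: reduce to \zcref{thm:essential surfaces in interval bundles are vertical}, then rule out the inessential configurations that do not cut off a $D^2\times[0,1]$ via annular simplification and least weight. Two steps fail as written.

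\textbf{The case analysis of a compressible annulus is incomplete.} Compressing $A$ along $D$ gives discs $A_1,A_2$ that bound balls $B_1,B_2$ together with discs $D_1,D_2\subs\hboundary M'$. When $D_1,D_2$ lie on the same component $\boundary_iM'$, the balls may be \emph{nested}, $B_1\subs B_2$. In that case $A$ does not cut a $D^2\times[0,1]$ off $M'$: the two horizontal discs would have to be disjoint, but the discs in $\boundary_iM'$ bounded by $\boundary A$ are $D_1\subs D_2$. Nor need $A$ be parallel into $\hboundary M'$, since the tube joining $A_1$ to $A_2$ inside $B_2\setminus B_1$ can be knotted. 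So this case is covered neither by your step 2, which claims one always gets pictures (a) or (b), nor by your step 3, which only treats boundary-parallel annuli and squares. The paper disposes of it by a trivial annular simplification, $F'=(\boundary_iM'\setminus D_2)\cup A\cup D_1$. (Separately, a clean boundary-compression of an annulus needs both boundary curves on one $\boundary_iM'$. It yields a disc with boundary in $\boundary_iM'$, not one ``running between $\boundary_0$ and $\boundary_1$''.)

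\textbf{Your justification that the weight drops strictly does not work.} Since $A$ misses $\TTT^{(1)}$, the simplified surface has $w(F')=w(F)-w(A')$ (or $w(F)-w(D_2\setminus D_1)$). You therefore need the swept subsurface of $\hboundary M'$ to meet $\TTT^{(1)}$. Saying that the swept guts contain normal discs meeting the $1$\=/skeleton does not give this. Let $\pi$ be the projection $\vboundary\thick{Y}\to\vboundary Y$. The points of $\TTT^{(1)}$ on the normal discs adjacent to $A$ are corners of the quadrilaterals forming $\pi(A)\subs\vboundary X\cap\vboundary Y$. These corners lie in $\thick{Y}$, on the far side of $\boundary A$ whenever the swept region is on the guts side. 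If every edge segment of the swept gut pieces lies in $\pi(A)$, then $w(A')=0$, and $F'$ is just another least\=/weight representative, with no contradiction. The transfer\=/map remark does not address this.

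The missing idea is the paper's edge\=/dragging step. Because $\pi(A)$ is simplicial, it contains an edge segment $e$ of $M'\cap\TTT^{(1)}$ with endpoints on $\hboundary M'$. In the equality case both endpoints of $e$ lie on $F'$, joined in $F'$ by a strip running along a vertical arc $a\subs A$ with $\pi(a)=e$. Pushing $a$ across $e$ removes both points from $F'\cap\TTT^{(1)}$, giving a surface isotopic to $F$ of strictly smaller weight. The paper uses this in the trivial and essential annular simplifications and in the square\=/parallel\=/into\=/$\hboundary M'$ case. With it, and with the nested\=/ball case added, your outline becomes the paper's proof.
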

\begin{proof}
Let $\Gamma=\boundary\hboundary M'$.
We analyse two cases, depending on whether $A$ is an annulus or a square.

\paragraph{When $A$ is an annulus.}
If $A$ is not vertical, then by \zcref{thm:essential surfaces in interval bundles are vertical} it must be compressible or boundary\=/compressible in $(M',\Gamma)$.
Suppose that $A$ is compressible in $M'$, and let $D$ be a non\=/trivial compressing disc for $A$.
Compressing $A$ along $D$ yields two disjoint discs $A_1$ and $A_2$, with $\boundary A_1$ and $\boundary A_2$ contained in $\hboundary M'$.
Since $\hboundary M'$ is incompressible in $M'$, the boundary of $A_1$ bounds a disc $D_1$ in $\hboundary M'$; by irreducibility of $M'$, we deduce that the sphere $A_1\cup D_1$ bounds a $3$\=/ball $B_1$ in $M'$ (when $F$ is a sphere, the $3$\=/manifold $M'$ is not irreducible, but this statement is still true up to replacing $D_1$ with the other disc in $\hboundary M'$ having the same boundary).
Similarly, we find a $3$\=/ball $B_2$ in $M'$ whose boundary is the union of $A_2$ and a disc $D_2\subs\hboundary M'$.
If $B_1$ and $B_2$ are disjoint, then $A$ cuts a $D^2\times[0,1]$ off of $M'$, namely $B_1\cup B_2\cup\NNN(D)$; this situation is depicted in \zcref{fig:cutting a disc cross I off:1} or \ref{fig:cutting a disc cross I off:2}.
If, instead, one of the two $3$\=/balls is contained in the other -- say, $B_1\subs B_2$, then we consider the surface
\[
F'=(\boundary_iM'\setminus D_2)\cup A\cup D_1,
\]
where $\boundary_iM'$ is the component of $\hboundary M'$ containing $D_1$ and $D_2$; see \zcref{fig:trivial annular simplification}.

\begin{myfigure}
\begin{subfigure}{0.45\textwidth}
\centering
\myfiguresource[scale=.27]{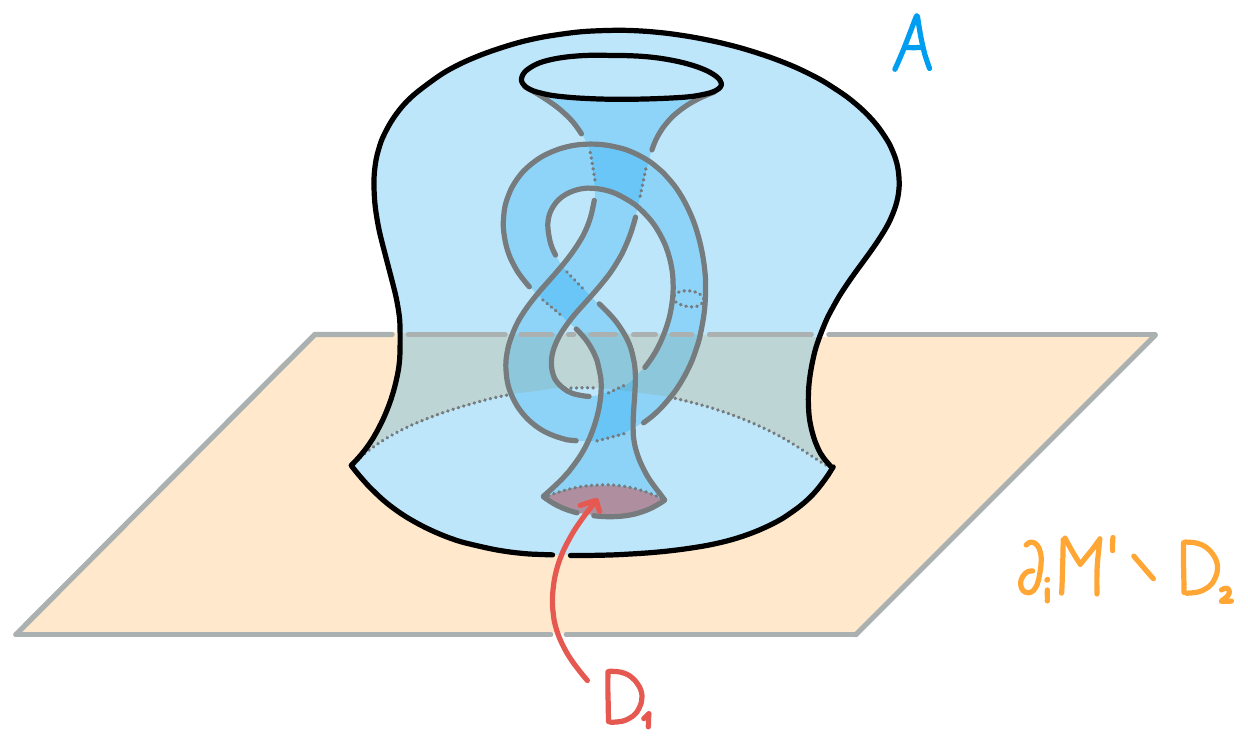}
\caption{}
\label{fig:trivial annular simplification}
\end{subfigure}\hfill
\begin{subfigure}{0.45\textwidth}
\centering
\myfiguresource[scale=.27]{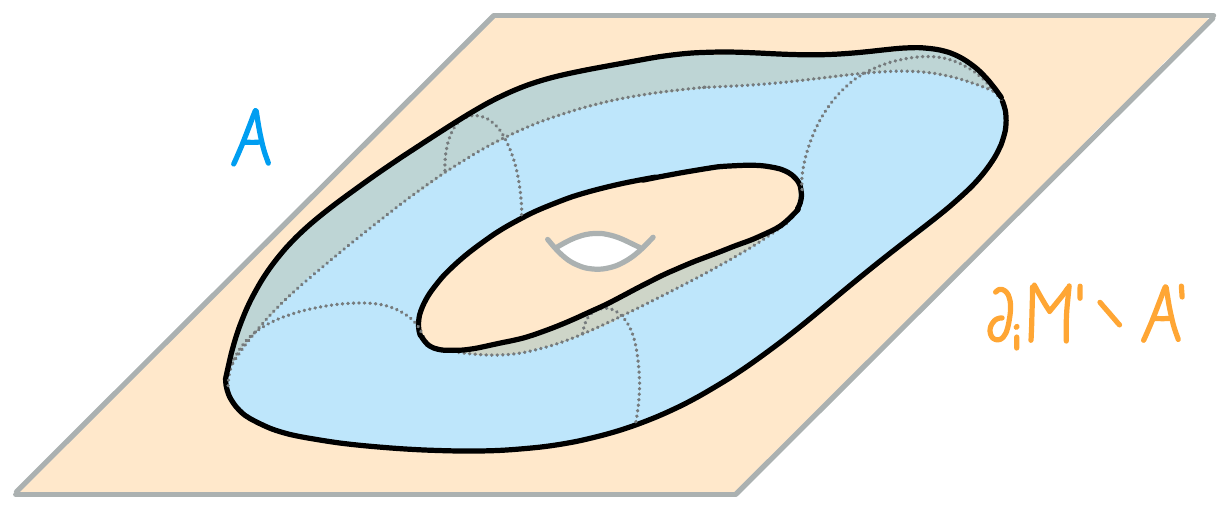}
\caption{}
\label{fig:essential annular simplification}
\end{subfigure}
\caption{\subref{fig:trivial annular simplification} In a trivial annular simplification, the surface $F'$ is the union of three pieces: the surface $\boundary_iM'\setminus D_2$, the annulus $A$, and the disc $D_1$.
\subref{fig:essential annular simplification} In an essential annular simplification, the surface $F'$ is the union of two pieces: the surface $\boundary_iM'\setminus A'$ and the annulus $A$.}
\end{myfigure}

This construction is called \emph{trivial annular simplification}, and is described in \cite[\S6.4]{lackenby-purcell:triangulation-complexity-fibred}.
Note that $F'$ is isotopic to $\boundary_iM'$ in $M$, and hence to $F$.
The annulus $A$ does not intersect the edges of $\TTT$; therefore, we see that
\[
w(\boundary_iM')-w(F')=\lvert(D_2\setminus D_1)\cap\TTT^{(1)}\rvert,
\]
where weights are taken with respect to $\TTT$.
However, since $w(\boundary_iM')=w(F)$, by minimality of $F$ we deduce that $D_2\setminus D_1$ must be disjoint from the $1$\=/skeleton of $\TTT$, and moreover that $w(F')=w(F)$.
Recall that $A$ is a component of $\thin{X}\cap\thick{Y}$.
If we let $\pi$ be the natural projection of $\vboundary\thick{Y}$ onto $\vboundary Y$, induced by the retraction $\umap{\thick{Y}}{Y}$, then $\pi(A)$ is a simplicial subset of $\vboundary X\cap\vboundary Y$; in particular, it will contain a component $e$ of the intersection $M'\cap\TTT^{(1)}$.
Note that, since $D_2\setminus D_1$ cannot intersect $e$, we necessarily have that $e$ has an endpoint on $D_1$ and the other on $\boundary_iM'\setminus D_2$.
Let $a$ be a vertical arc in $A$ such that $\pi(a)=e$.
There is an isotopy of $M'$ that takes $a$ to $e$ and then pushes it slightly past $e$; this isotopy takes $F'$ to a surface $F''$ such that
\[
F''\cap\TTT^{(1)}=(F'\cap\TTT^{(1)})\setminus e.
\]
See \zcref{fig:parallelity bundle of a least-weight fibre:isotopy} for a depiction of this isotopy.
The surface $F''$ is isotopic to $F$ in $M$, and has weight strictly less than that of $F$, contradicting the minimality of $F$.

\begin{myfigure}
\myfiguresource[scale=.3]{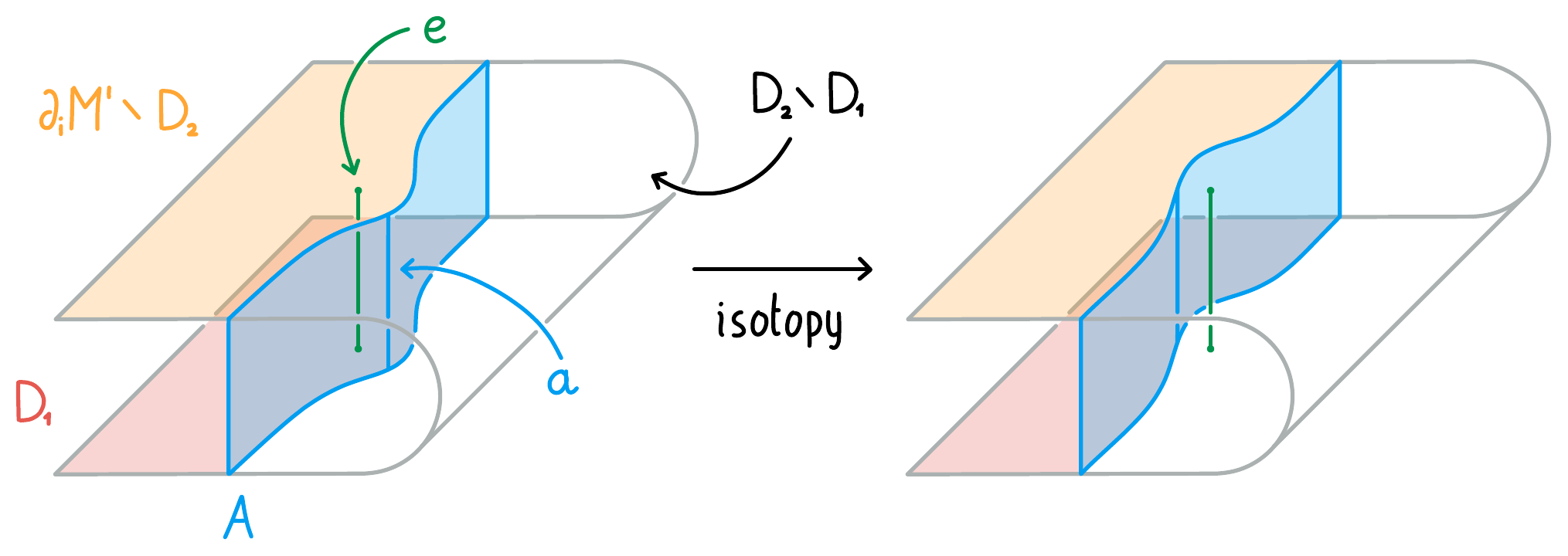}
\caption{Dragging $a$ to $e$ and then past it produces an isotopy of $M'$ that sends the surface $F'=(\boundary_iM'\setminus D_2)\cup A\cup D_1$ to a surface $F''$ that does not intersect $e$ and, hence, has weight strictly less than that of $F'$.}
\label{fig:parallelity bundle of a least-weight fibre:isotopy}
\end{myfigure}

Suppose now that $A$ is incompressible and boundary\=/compressible in $(M',\Gamma)$, and let $D$ be a non\=/trivial clean boundary\=/compressing disc for $A$.
Compressing $A$ along $D$ yields a disc $A_1$, with $\boundary A_1$ contained in $\hboundary M'$.
Like above, we find that $A_1$ cobounds a $3$\=/ball $B_1$ with $\hboundary M'$.
We see that $D$ cannot be contained in $B_1$, since $A$ is incompressible in $M'$.
Therefore, the annulus $A$ cobounds a solid torus (namely, $B_1\cup\NNN(D)$) with an annulus $A'$ in $\hboundary M'$.
Denote by $\boundary_iM'$ the component of $\hboundary M'$ containing $A'$, and consider the surface
\[
F'=(\boundary_iM'\setminus A')\cup A;
\]
see \zcref{fig:essential annular simplification}.
This is an instance of \emph{essential annular simplification} as described in \cite[\S6.4]{lackenby-purcell:triangulation-complexity-fibred}.
Note that $F'$ is isotopic to $\boundary_iM'$ in $M$, and hence to $F$.
Moreover, the same argument as above shows that $F'$ can be isotoped to a general position surface with weight strictly less than that of $F$, contradicting the minimality of $F$.

\paragraph{When $A$ is a square.}
If $A$ is not vertical, then by \zcref{thm:essential surfaces in interval bundles are vertical} it must be boundary\=/compressible in $(M',\Gamma)$.
Let $D$ be a non\=/trivial clean boundary\=/compressing disc for $A$.
Compressing $A$ along $D$ yields two disjoint discs $A_1$ and $A_2$, with $\boundary A_1$ and $\boundary A_2$ each intersecting $\Gamma$ twice.
It is easy to see that $A_1$ and $A_2$ cobound $3$\=/balls -- say, respectively, $B_1$ and $B_2$ -- with $\boundary M'$.

Suppose first that the boundary\=/compressing disc $D$ intersects $\hboundary M'$, and is therefore disjoint from $\vboundary M'$.
If one of the two $3$\=/balls is contained in the other, say $B_1\subs B_2$, then $A$ cuts a $D^2\times[0,1]$ off of $M'$, namely $B_2\setminus(B_1\cup\NNN(D))$; this situation is depicted in \zcref{fig:cutting a disc cross I off:4}.
Otherwise, the two $3$\=/balls are disjoint.
Let $B$ be the closure of the component of $M'\setminus A$ intersecting $B_1$ and $B_2$; then $B$ is itself a $3$\=/ball, and it is the union of $B_1$, $B_2$, and a neighbourhood of $D$.
Let $A'=B\cap\hboundary M'$, and let $\boundary_iM'$ be the component of $\boundary_h M'$ containing $A'$.
Consider the surface
\[
F'=(\boundary_iM'\setminus A')\cup A.
\]
Like above, we can argue that $F'$ is isotopic to $F$ in $M$, and it can be isotoped to a general position surface with weight strictly less than that of $F$, contradicting the minimality of $F$.

Suppose now that the boundary-compressing disc $D$ intersects $\vboundary M'$ -- and is therefore disjoint from $\hboundary M'$.
If $B_1\subs B_2$ or $B_2\subs B_1$, then in fact $A$ also admits a boundary\=/compressing disc that intersects $\hboundary M'$, and we reduce to the previous case.
Otherwise, the two $3$\=/balls are disjoint, and it is then easy to see that $A$ cuts a $D^2\times[0,1]$ off of $M'$, namely $B_1\cup B_2\cup\NNN(D)$; this situation is depicted in \zcref{fig:cutting a disc cross I off:3} or \ref{fig:cutting a disc cross I off:4}.
\end{proof}

\section{The certificate}

% Remark on the logical order of things

\subsection{Algorithms for interval bundles}

Consider the following decision problem.

\begin{algoproblem}[\problemname{Sutured interval bundle recognition}]
\label{prb:interval bundle recognition}
\Input{a pre\=/sutured triangulation $\TTT$ of a compact $3$\=/manifold.}
\Output{whether $\TTT$ is a suitable pre\=/sutured triangulation of $F\times[0,1]$ for some compact orientable surface $F$.}
The size of the input is measured by the number of tetrahedra in $\TTT$.
\end{algoproblem}

It turns out that a positive answer to this question can be certified in polynomial time.

\begin{theorem}[{\nameref{prb:interval bundle recognition} is in \NP}]
\label{thm:interval bundle recognition is in NP}
The problem \nameref{prb:interval bundle recognition} is in \NP{}.
\end{theorem}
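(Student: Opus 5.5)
We need to certify that a pre-sutured triangulation $\TTT$ of a compact $3$\=/manifold $N$ is suitable, that is, that $N$ is homeomorphic as a pre\=/sutured manifold to $\boundary_0 N\times[0,1]$. The strategy is to reduce the question to recognising $3$\=/balls, which is in \NP{} by \cite{schleimer:sphere-recognition-lies} (and, in its ball version, by \cite{ivanov:computational-complexity-basic}). The reduction cuts $N$ along a system of vertical discs given in normal form.

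First, the verifier runs some polynomial-time combinatorial checks. It checks that $\boundary_0 N$, $\boundary_1 N$ and $\vboundary N$ are simplicial subsurfaces that partition $\boundary N$ as in \zcref{def:pre-sutured manifold}. It checks that $\vboundary N$ is a union of annuli, each with one boundary curve in $\boundary_0 N$ and one in $\boundary_1 N$. It checks that $\boundary_0 N$ and $\boundary_1 N$ are orientable surfaces with the same Euler characteristic, componentwise matched through $\vboundary N$. All of these are direct computations on a triangulation of size $t$.

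The certificate is a normal surface $\Delta$ in $N$, given by its vector. It is a disjoint union of properly embedded discs, each meeting $\vboundary N$ in vertical arcs or not at all, and each meeting $\boundary_0 N$ and $\boundary_1 N$ in single arcs; these are the vertical discs $\alpha_j\times[0,1]$, where the arcs $\alpha_j$ cut $\boundary_0 N$ into a single disc per component (closed components of $\boundary_0 N$ handled similarly). In a genuine product, \zcref{thm:essential surfaces in interval bundles are vertical} lets us take such a system essential with respect to the boundary pattern $\Gamma=\boundary\hboundary N$. It can therefore be normalised in the sense of \zcref{thm:normalising surfaces}, adapted to boundary patterns, and chosen as a sum of boundedly many fundamental surfaces. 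By \zcref{thm:bound on weight of fundamental surfaces} its weight is then at most exponential in $t$, so it has polynomial bit length. The main difficulty is making this normalisation rigorous, so that both the number of discs and their weights stay bounded. I expect to control the disc count via the Euler characteristic, which forces at most $O(t)$ arcs. I expect to control the weights by a Kneser–Haken type finiteness argument, with the disc system taken as a sum of vertex surfaces.

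The verifier then proceeds as follows. Using \zcref{thm:counting components of a normal surface} and the Euler characteristic, computed as a linear function of the vector, it checks that each component of $\Delta$ is a disc. Using the orbit-tracking trick of \zcref{thm:transverse orientation of a normal surface}, it checks that each disc meets $\boundary_0 N$ and $\boundary_1 N$ in one arc each. Next it cuts $N$ along $\Delta$. The guts have size $O(t)$ by \zcref{thm:triangulation of the guts is small}. The parallelity bundle is a product $I$-bundle, but it may be large. This is the second obstacle. I would handle it by not cutting explicitly. Instead, I would ask the certificate to supply, for each component of $N\cut\Delta$, a ball-recognition certificate for a triangulation obtained from the guts, with the parallelity pieces collapsed, since these are product regions over discs. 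The verifier must also check that each component's boundary is a sphere split into exactly two horizontal discs and one vertical annulus. Finally, a component that is a ball with this boundary pattern is a product $D^2\times[0,1]$, and the gluing along the vertical discs respects the product structure. This reassembles a homeomorphism $N\cong\boundary_0 N\times[0,1]$ preserving $\boundary_0$, $\boundary_1$ and $\vboundary$, so a certificate passing all checks proves suitability.
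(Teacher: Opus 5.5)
There are genuine gaps, in exactly the two steps that carry the argument. For comparison, the paper does not reprove this statement. It invokes \cite[Theorem~12.1]{lackenby:efficient-certification-knottedness} after a polynomial-time check that the pre-sutured triangulation is genuinely sutured. It repairs the case of closed $F$ by adding to the certificate a bounded-weight vertical normal annulus to cut along first, as in \cite[Section~11]{lackenby-schleimer:recognising-elliptic-manifolds}.

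\textbf{Existence of $\Delta$.} Nothing you invoke produces $\Delta$. \zcref[S]{thm:essential surfaces in interval bundles are vertical} goes the wrong way for your purpose: you need vertical $\Rightarrow$ essential. \zcref[S]{thm:normalising surfaces} explicitly excludes disc components, and in any case only says that normalising does not increase weight; neither result gives a weight bound. Exchange arguments of Jaco--Tollefson type yield at best individual product discs among the fundamental or vertex surfaces. The fundamental summands of a least-weight \emph{system} need not be discs, vertical, or disjoint. So nothing shows that some disjoint family cutting $N$ into balls has weight $2^{\mathrm{poly}(t)}$; Kneser--Haken finiteness bounds numbers of surfaces, not weights. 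The natural route is to cut along one disc at a time and control the complexity of the successively cut manifolds, and that is the real content of Lackenby's theorem.

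\textbf{Closed $F$.} On a component of $N$ in which $\boundary_0 N$ is a closed surface, $\vboundary N$ is empty. The connected boundary of any properly embedded disc then lies in a single component of $\boundary N$, so it cannot meet both $\boundary_0 N$ and $\boundary_1 N$: your discs do not exist there. ``Handled similarly'' is precisely the gap in Lackenby's published proof that the paper flags. One must first cut along a vertical annulus, and $S^2\times[0,1]$, whose vertical annuli are compressible, needs yet another argument.

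\textbf{Verification.} This step fails as stated, because the parallelity bundle of $N\cut\Delta$ is not a union of products over discs. Its components are $I$-bundles whose horizontal boundaries are planar subsurfaces of the two copies of $\Delta$, and these can be annuli or have many boundary components. For example, if an interior vertex of $\TTT$ lies between two parallel discs of $\Delta$, the gut pieces around it punch a hole in the parallelity region between them. Part of the vertical boundary of such a component can also lie on $\boundary N$ rather than against the guts. Collapsing these components changes the topology, so ball certificates for the collapsed complexes certify nothing about $N\cut\Delta$.

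You would instead need two things before the ``two discs and an annulus'' test is even meaningful:
\begin{itemize}
\item retriangulate these components with polynomially many tetrahedra; \zcref{thm:triangulating interval bundles efficiently} does not directly apply, since it needs the prescribed part to meet every vertical boundary component, which can fail here;
\item compute by orbit counting how the pieces of $\boundary_0 N$, $\boundary_1 N$ and $\vboundary N$ cut out by $\boundary\Delta$, possibly exponentially many, distribute over the components.
\end{itemize}
Your final reassembly is fine: a ball with that boundary pattern is $D^2\times[0,1]$, and gluing along vertical squares preserves the product structure. What is missing is everything that feeds into it.
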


This result appears as Theorem 12.1 in \cite{lackenby:efficient-certification-knottedness}, stated in greater generality for handle structures instead of pre\=/sutured triangulations.
In fact, to be precise, since we are given as input a pre\=/sutured triangulation instead of a \emph{sutured triangulation}, a pre\=/processing step is required, to verify that the given pre\=/sutured triangulation is indeed a sutured triangulation.
In practice, for a given pre\=/sutured triangulation of a $3$\=/manifold $M$, one simply needs to check that $\vboundary M$ is a union of annuli, each of which intersects both $\boundary_0 M$ and $\boundary_1 M$; moreover, that $\boundary_0 M$ and $\boundary_1 M$ both intersect each component of $M$.
These checks can be easily carried out in polynomial time in the size of the triangulation.
More crucially, the reader should be aware that the published proof of \cite[Theorem~12.1]{lackenby:efficient-certification-knottedness} contains a gap, in that it only works for sutured triangulations with non\=/empty sutures -- in other words, when the surface $F$ has non\=/empty boundary.
However, this issue can easily be fixed by cutting the $3$\=/manifold along a vertical normal annulus with bounded weight, which can be provided as part of the certificate; this is discussed at the beginning of \cite[Section~11]{lackenby-schleimer:recognising-elliptic-manifolds}.

It should come as no surprise that the minimum number of tetrahedra needed to triangulate an interval bundle over a planar surface $F$ is linear in the number of boundary components of $F$.
For our fibredness certificate, we will need a more refined version of this estimate, that bounds the number of tetrahedra needed to extend a given triangulation of a vertical subset of $\boundary F\times[0,1]$ to a triangulation of $F\times[0,1]$.
The bound is linear in the number of triangles of the prescribed triangulation, provided that the vertical subset touches every component of $\boundary F\times[0,1]$.

\begin{proposition}[Triangulating interval bundles over planar surfaces efficiently]
\label{thm:triangulating interval bundles efficiently}
Let $M=F\times[0,1]$ for some compact orientable planar surface $F$.
Let $\Omega\subs\boundary F$ be a union of finitely many circles or closed intervals with non\=/empty interiors; suppose that $\Omega$ intersects every component of $\boundary F$.
Let $\TTT_0$ be a triangulation of $\Omega\times[0,1]\subs\vboundary M$ with $t_0$ triangles, such that every point in $\boundary\Omega\times\{0,1\}$ is a vertex of $\TTT_0$.
Then there is a triangulation $\TTT$ of $M$ such that:
\begin{enumarabic}
\item the restriction of $\TTT$ to $\Omega\times[0,1]$ is $\TTT_0$;
\item the restriction of $\TTT$ to $\hboundary M$ is flapless;
\item the number of tetrahedra of $\TTT$ is at most $26t_0$.
\end{enumarabic}
\end{proposition}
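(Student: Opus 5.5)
We build the triangulation explicitly, first cutting $M$ into balls. Let $b_1,\dots,b_n$ be the components of $\boundary F$. Since $F$ is planar and $\Omega$ meets every $b_j$, we have $n$ at most the number of components of $\Omega$. Moreover, each component of $\Omega\times[0,1]$ carries at least two triangles of $\TTT_0$, because $\boundary\Omega\times\{0,1\}$ consists of vertices and the interior of $\Omega$ is non\=/empty. Hence $n\le t_0/2$. We also fix $n-1$ disjoint properly embedded arcs $\alpha_1,\dots,\alpha_{n-1}$ in $F$, each with endpoints in $\Omega$, such that cutting $F$ along them gives a single disc $P$. One way is a ``tree'' of arcs joining $b_1$ to each other $b_j$, with endpoints chosen in the interior of $\Omega\cap b_j$. (If $n=1$, $F$ is a disc and no arcs are needed.) Cutting $M$ along the vertical squares $\alpha_i\times[0,1]$ produces the ball $P\times[0,1]$, and each square appears twice on its boundary.

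The plan has three steps.
\begin{enumarabic}
\item Subdivide $\TTT_0$ so that the endpoints of each $\alpha_i$ cross $\Omega\times[0,1]$ along vertical edges. I would rather choose the endpoints so that $\boundary\alpha_i\times[0,1]$ already lies along edges of $\TTT_0$. If no vertical edge is available, the cheapest option is to take the arcs' endpoints inside triangles and cone. Condition (1) forbids modifying $\TTT_0$ itself, however. I therefore intend to pick each endpoint on $\boundary\Omega$ when $\Omega\cap b_j$ is an interval, since $\boundary\Omega\times[0,1]$ is a vertical segment from a vertex on $\boundary_0$ to a vertex on $\boundary_1$. For circle components I would pick an endpoint where a vertical path of edges exists, or otherwise insert a thin collar product layer of tetrahedra. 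The collar would be triangulated so that its inner side has a vertical edge, at a cost of $O(1)$ tetrahedra per triangle of $\TTT_0$.
\item Triangulate each square $\alpha_i\times[0,1]$ with two triangles, using one diagonal.
\item Triangulate the complementary part of $\vboundary M$, namely $(\boundary F\setminus\Omega)\times[0,1]$, as a union of squares, each with two triangles. There are at most as many such squares as components of $\Omega$, so at most $t_0/2$ of them.
\end{enumarabic}

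After these steps the boundary sphere of $P\times[0,1]$ is triangulated by the following pieces:
\begin{itemize}
\item $\TTT_0$;
\item the $4(n-1)$ triangles of the two copies of the squares $\alpha_i\times[0,1]$;
\item the squares of step 3;
\item the two horizontal discs $P\times\{0\}$ and $P\times\{1\}$.
\end{itemize}
Each horizontal disc is a polygon whose sides are already triangulated. I triangulate it by coning from an added interior vertex, which guarantees flaplessness: every triangle then has exactly one edge on $\boundary\hboundary M$. This is condition (2) for $\hboundary M$, whose boundary is $\boundary F\times\{0,1\}$. The number of boundary edges of each horizontal disc is $O(t_0)$, since each triangle of $\TTT_0$ contributes at most one horizontal edge and the other pieces contribute $O(n)$ edges.

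Finally, I triangulate the ball by coning the boundary sphere from an interior point. The number of tetrahedra then equals the number of boundary triangles, which is
\[
t_0+4(n-1)+2\cdot(\text{squares of step 3})+2\cdot(\#\text{ edges of }\boundary P\times\{0\}).
\]
With careful counting this sum should come out below $26t_0$. Regluing along the squares gives $\TTT$, provided the identifications are simplicial, which holds by construction. The glued complex is a genuine triangulation in the paper's loose sense, since only cell complexes with simplicial gluings are required.

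The step I expect to be the main obstacle is step 1, handling circle components of $\Omega$ where no vertical edge of $\TTT_0$ is available without altering $\TTT_0$. The collar trick resolves it, but it must be counted carefully to keep the constant at $26$. Most of the slack in the constant goes to that collar and to the coning of the horizontal discs.
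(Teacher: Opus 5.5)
Your overall architecture is the same as the paper's: cut $M$ along vertical squares into a ball, cone the two horizontal polygons from interior points to get flaplessness, then cone the ball. However, the step you flag as the main obstacle is not resolved, and it comes from insisting that the cutting squares be pairwise disjoint. A star of disjoint arcs needs $n-1$ pairwise disjoint vertical edge paths on $b_1\times[0,1]$, and a chain needs two on each inner component. When that component lies entirely in $\Omega$, the triangulation $\TTT_0$ need not contain them. For example, an annulus made of two triangles (a square with two opposite sides identified, plus a diagonal) has a single vertex on each boundary circle, so any two vertical edge paths share both endpoints. Your collar fix is only named. You do not say how to triangulate the collar with $\TTT_0$ on one side and a different triangulation on the other. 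You do not say how its horizontal annuli, which become part of $\hboundary M$, stay flapless, or what it costs, and the collar tetrahedra are missing from your final count. You also misdiagnose the difficulty. A single vertical edge path always exists: a shortest edge path from the vertices of $b_j\times\{0\}$ to those of $b_j\times\{1\}$ is an embedded arc meeting each boundary circle once. For $b_j\not\subseteq\Omega$, a component of $\boundary\Omega\times[0,1]$ also works. Only disjoint families of such paths can fail to exist.

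The missing idea is to let the squares share an arc. The paper picks one vertical simplicial arc $a_i$ on each $b_i\times[0,1]$ and relabels so that $a_1$ is the shortest. It then uses squares $S_2,\dots,S_m$ with $S_i\cap\vboundary M=a_1\cup a_i$ and $S_i\cap S_j=a_1$; this fan still cuts $M$ into a ball.

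Your counts also need repair, independently of this. The vertical arcs, and the sides $\boundary\Omega\times[0,1]$ of the complementary regions, are edge paths of $\TTT_0$ of arbitrary length. So $S_i$ needs $\ell_1+\ell_i$ triangles, not two. A complementary region needs as many triangles as it has edges on its vertical sides, not two; this is how the paper extends $\TTT_0$ to all of $\vboundary M$ with at most $2t_0$ triangles. Choosing $\ell_1$ minimal is what controls the repeated use of $a_1$, via $(m-1)\ell_1\le\ell_1+\dots+\ell_m$. With these corrected counts one gets $t_3<13t_1\le 26t_0$. Your remark that the sum ``should come out below $26t_0$'' leaves the explicit constant unproved, and the paper later relies on it.
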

\begin{proof}
It is not restrictive to assume that $M$ is connected.
We start by finding a triangulation $\TTT_1$ of $\vboundary M$ that agrees with $\TTT_0$ on $\Omega\times[0,1]$.
Denote by $b_1,\dots,b_m$ the components of $\boundary F$; for each $1\le i\le m$, we construct a triangulation of $b_i\times[0,1]$.
Let $\Omega_i=\Omega\cap b_i$, and let $s_i$ be the number of triangles of $\TTT_0$ contained in $\Omega_i\times[0,1]$.
If $b_i\subs\Omega$, then we simply triangulate $b_i\times[0,1]$ by restricting $\TTT_0$.
Otherwise, denote by $k$ the number of components of $\Omega_i$, and by $h$ the number of edges of $\TTT_0$ that are contained in $\boundary\Omega_i\times[0,1]$; a simple counting argument shows that $s_i\ge h$.
The closure of the complement of $\Omega_i\times[0,1]$ in $b_i\times[0,1]$ is a union of topological discs; each of these discs has a natural cell structure of its boundary, with one edge on $b_i\times\{0\}$, one on $b_i\times\{1\}$, and the remaining edges on $\boundary\Omega_i\times[0,1]$ coming from $\TTT_0$.
The number of these discs is $k$, and the combined simplicial length of their perimeters is exactly $2k+h$.
Therefore, their union can be triangulated with exactly $h$ triangles, as shown in \zcref{fig:triangulating interval bundles efficiently:triangulating vboundary}.

\begin{myfigure}

\myfiguresource[scale=.3]{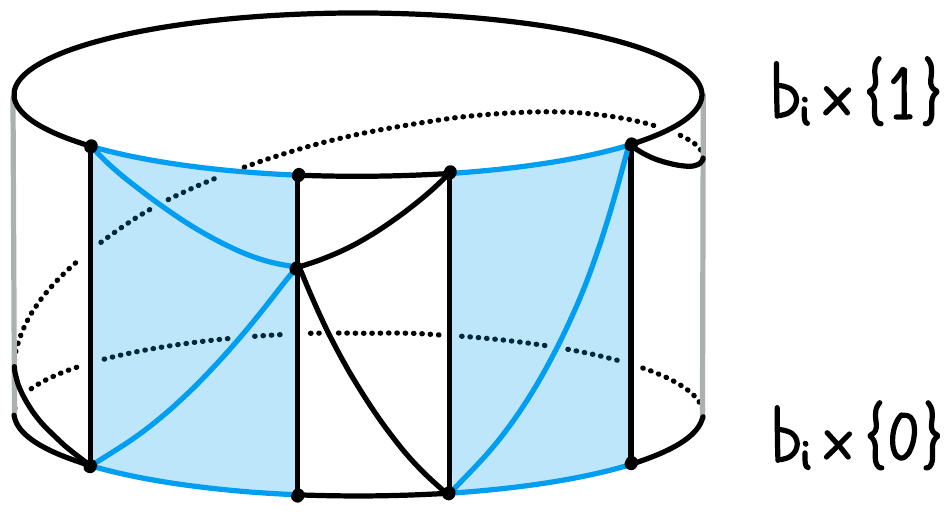}
\caption{The triangulation $\TTT_1$ restricted to $b_i\times[0,1]$.
The edges and triangles of $\TTT_1$ that are not in $\TTT_0$ are highlighted.}
\label{fig:triangulating interval bundles efficiently:triangulating vboundary}
\end{myfigure}

In conclusion, we have constructed a triangulation of $b_i\times[0,1]$ with at most $2s_i$ triangles (recall that $h\le s_i$) that agrees with $\TTT_0$ on $\Omega_i\times[0,1]$.
Repeating this construction for every boundary component $b_i$ yields a triangulation $\TTT_1$ of $\vboundary M$ that agrees with $\TTT_0$ on $\Omega\times[0,1]$ and has
\begin{equation}\label{eqn:triangulating interval bundles efficiently:t1}
t_1\le2s_1+\dots+2s_m=2t_0
\end{equation}
triangles.

The next step is to find a triangulation of a union $S$ of vertical squares in $M$ that agrees with $\TTT_1$ on $S\cap\vboundary M$, such that $M\cut S$ is a $3$\=/ball.
To this aim, for each $1\le i\le m$, fix a simplicial arc $a_i\subs b_i\times[0,1]$ that is vertical in $M$.
Denote by $\ell_i$ the simplicial length of $a_i$, and -- without loss of generality -- assume that $\ell_1\le\ell_i$ for every $2\le i\le m$.
Find squares $S_2,\dots,S_m$ in $M$ such that $S_i\cap\vboundary M=a_1\cup a_i$ for each $2\le i\le m$, and $S_i\cap S_j=a_1$ for all $1\le i<j\le m$.
The boundary of each $S_i$ has a natural cell structure with exactly $\ell_1+\ell_i+2$ edges; therefore, we can triangulate $S_i$ with $\ell_1+\ell_i$ triangles.
Let $\TTT_2$ be the triangulation of $S=S_2\cup\dots\cup S_m$ obtained from this construction, as shown in \zcref{fig:triangulating interval bundles efficiently:S}. The number $t_2$ of triangles of $\TTT_2$ is exactly
\begin{equation}\label{eqn:triangulating interval bundles efficiently:t2}
t_2=(m-1)\ell_1+\ell_2+\dots+\ell_m.
\end{equation}

Finally, we construct a triangulation of $M$, by exploiting the fact that $B=M\cut S$ is a topological $3$\=/ball.
The triangulations $\TTT_1$ and $\TTT_2$ induce a cell structure on the boundary of $B$, consisting of $t_1+2t_2$ triangles and two polygons $P_0$ and $P_1$, where $P_i=B\cap\boundary_i M$ for $i\in\{0,1\}$; this is depicted in \zcref{fig:triangulating interval bundles efficiently:B}.
Note that, for each $i\in\{0,1\}$, the number of edges of $P_i$ is exactly $r_i+2m-2$, where $r_i$ is the number of edges of $\TTT_1$ that are contained in $\boundary_i M$.
We can then triangulate each $P_i$ with $r_i+2m-2$ triangles, by coning over a point in the interior of $P_i$; this construction guarantees that the triangulation of each $P_i$ is flapless, and it produces a triangulation of $\boundary B$ with exactly
\[
t_1+2t_2+r_0+r_1+4m-4
\]
triangles.
By coning over a vertex of $B$, we obtain a triangulation $\TTT_3$ of $B$ whose number $t_3$ of tetrahedra is at most
\begin{equation}\label{eqn:triangulating interval bundles efficiently:bound on t3}
t_3\le t_1+2t_2+r_0+r_1+4m-5.
\end{equation}
We conclude by gluing this triangulation of $B$ along $S$ to obtain a triangulation $\TTT$ of $M$.

\begin{myfigure}
\begin{subfigure}[b]{.42\linewidth}
\centering
\myfiguresource[scale=.29]{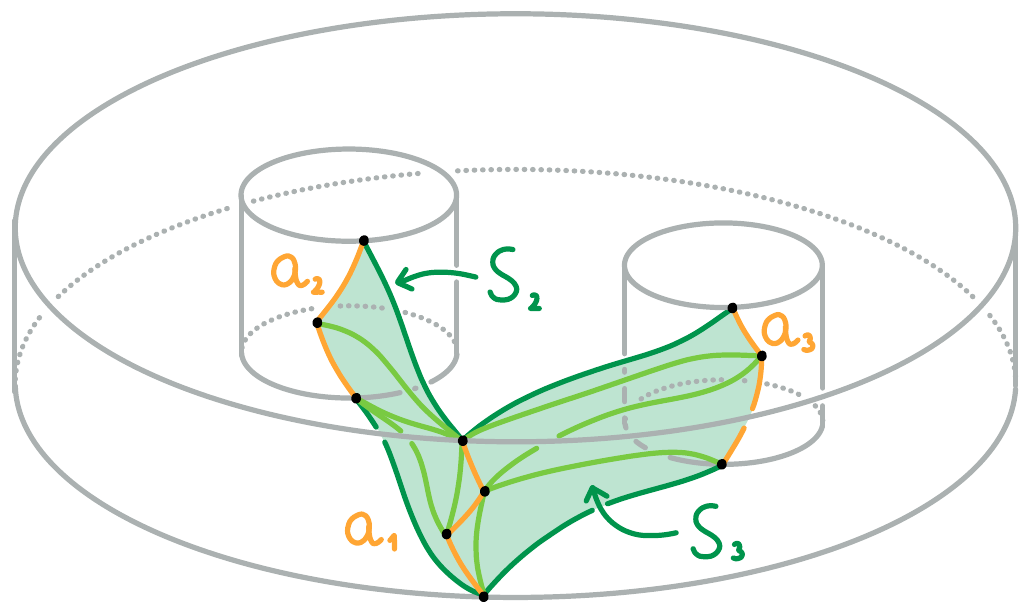}
\caption{}
\label{fig:triangulating interval bundles efficiently:S}
\end{subfigure}\hfill
\begin{subfigure}[b]{.48\linewidth}
\centering
\myfiguresource[scale=.29]{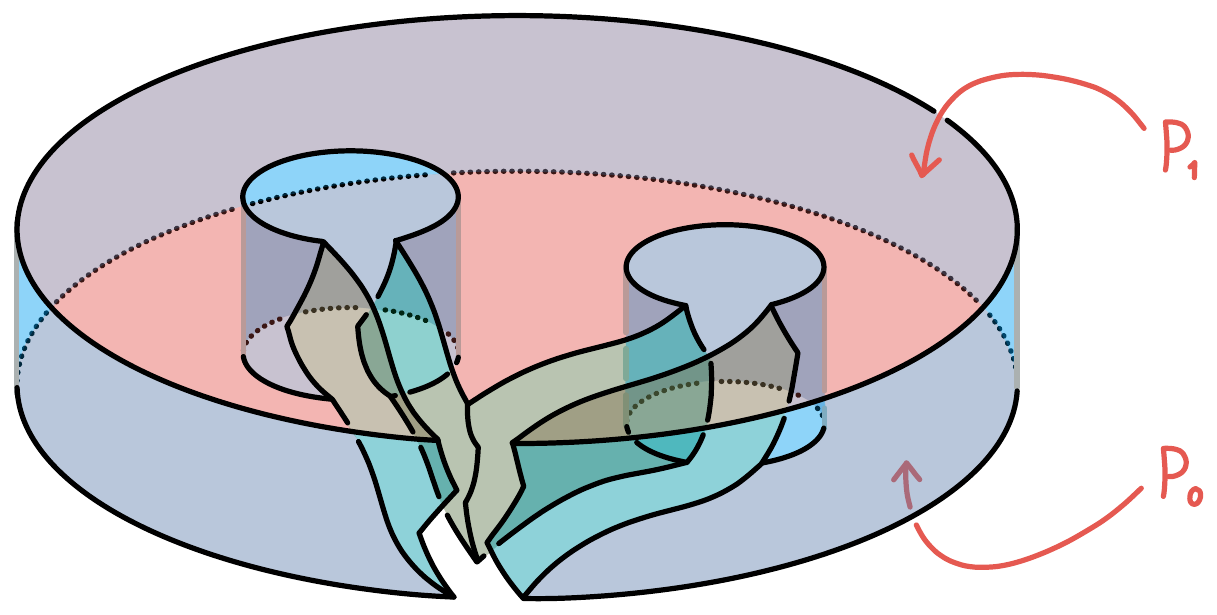}
\caption{}
\label{fig:triangulating interval bundles efficiently:B}
\end{subfigure}
\caption{\subref{fig:triangulating interval bundles efficiently:S} For each $2\le i\le m$, the square $S_i$ can be triangulated with $\ell_1+\ell_i$ triangles. \subref{fig:triangulating interval bundles efficiently:B} The topological $3$\=/ball $B=M\cut S$ has a natural cell structure, consisting of triangles coming from $\TTT_1$ and (two copies of) $\TTT_2$, and two polygons $P_0$ and $P_1$.}
\end{myfigure}

By construction, the restriction of $\TTT$ to $\Omega\times[0,1]$ is exactly $\TTT_0$, and the restriction of $\TTT$ to $\hboundary M$ is flapless, so all that is left is bounding the number of tetrahedra of $\TTT$ -- or, equivalently, of $\TTT_3$.
Let $s$ be the number of edges of $\TTT_1$; note that, trivially, we have the bound $s\le 3t_1$.
We also remark that, equally trivially, we have that $r_i\ge m$ for $i\in\{0,1\}$.
Elementary counting arguments show that
\begin{align}
\ell_1+\dots+\ell_m+r_0+r_1&\le s\le 3t_1,
\label{eqn:triangulating interval bundles efficiently:sum of l and r}\\
m\ell_1\le\ell_1+\dots+\ell_m&\le 3t_1-2m.
\label{eqn:triangulating interval bundles efficiently:m times l1}
\end{align}
We can finally bound the number of tetrahedra of $\TTT_3$ as follows:
\begin{alignaside*}
t_3&\le t_1+2t_2+r_0+r_1+4m-5&\aside{by \zcref{eqn:triangulating interval bundles efficiently:bound on t3}}\\
&=t_1+2(m-1)\ell_1+2\ell_2+\dots+2\ell_m+r_0+r_1+4m-5&\aside{by \zcref{eqn:triangulating interval bundles efficiently:t2}}\\
&\le 4t_1+(2m-3)\ell_1+\ell_2+\dots+\ell_m+4m-5&\aside{by \zcref{eqn:triangulating interval bundles efficiently:sum of l and r}}\\
&\le 7t_1+2(m-2)\ell_1+2m-5&\aside{by \zcref{eqn:triangulating interval bundles efficiently:m times l1}}\\
&\le13t_1-4\ell_1-2m-5&\aside{by \zcref{eqn:triangulating interval bundles efficiently:m times l1}}\\
&< 13t_1\\
&\le 26t_0&\aside{by \zcref{eqn:triangulating interval bundles efficiently:t1}.\qedhere}
\end{alignaside*}
\end{proof}

\subsection{Overview of the certificate}
\label{sec:overview of the certificate}

The remainder of this section is devoted to providing a certificate for fibredness of $3$\=/manifolds.
The precise construction of this certificate, carried out in \zcref{sec:certificate:construction}, is very technical and involved; we aim to make it less overwhelming, by providing a high level overview and some motivation in the following paragraphs.

Suppose we are given a triangulated compact orientable $3$\=/manifold $M$ with $t$ tetrahedra, that is guaranteed to be fibred.
The first piece of information that we include in the certificate is, quite naturally, a normal fibre $F$ of $M$.
To keep the certificate polynomial in size, we must ensure that the logarithm of the weight of $F$ is polynomial in $t$; the existence of such a fibre is proved in \zcref{thm:fibre of small weight,thm:toroidal fibre of small weight}.
One could imagine that simply providing the fibre would be sufficient to certify fibredness of $M$.
This is not completely unreasonable, although the burden is then shifted to the verifier, who must check that the given normal surface $F$ is indeed a fibre of $M$.
In fact, the verifier could apply \cite[Theorem~9.3]{lackenby:efficient-certification-knottedness} to cut $M$ along $F$, obtaining a pre\=/sutured manifold $M'=M\cut F$, and then check that $M'$ is a product interval bundle.
We do not know whether this last step be performed in polynomial time; however, it can be \emph{certified} in polynomial time, thanks to \cite[Theorem~12.1]{lackenby:efficient-certification-knottedness} (see the statement in \zcref{thm:interval bundle recognition is in NP}).
Therefore, a certificate consisting of a normal fibre $F$ and an auxiliary certificate (as provided by \cite[Theorem~12.1]{lackenby:efficient-certification-knottedness}) showing that $M\cut F$ is a product interval bundle would prove that $M$ is fibred, and could be verified in polynomial time.

This na\"ive approach, however, has a crucial flaw.
Namely, the size of this certificate would be polynomial in $t$ \emph{and} in the Euler characteristic of $F$, which itself is not in general polynomial in $t$, as the example we gave in \zcref{sec:introduction}.
This dependence cannot be avoided, since the minimum number of tetrahedra needed to triangulate $M\cut F$ is linear in $\abs{\chi(F)}$.
Therefore, we need a more refined strategy to certify that $F$ is a fibre of $M$ \emph{without} directly cutting $M$ along $F$.
We will see that this can be achieved, at the cost of a more complicated certificate.

For the sake of simplicity, suppose that the guts $X$ and the parallelity bundle $Y$ of $M'$ are both submanifolds of $M'$ (in the general case, we would be working with $\thin{X}$ and $\thick{Y}$ instead).
Suppose, moreover, that $M$ is closed; this assumption is not necessary, but it simplifies the exposition.
One could optimistically imagine a situation in which the annuli $X\cap Y$ are all vertical in $M'$.
This would imply that $Y$ is a product interval bundle, whose interval bundle structure agrees with that of $M'$.
The guts $X$ would also be homeomorphic as a pre\=/sutured manifold to a product interval bundle, whose interval bundle structure again agrees with that of $M'$.
In this case, certifying that $M$ is fibred -- or, equivalently, that the pre\=/sutured manifold $M'$ is a product interval bundle -- would be exceedingly easy: one simply needs to provide a proof that $X$ (with its pre\=/sutured manifold structure) is a product interval bundle, which can be done thanks to \cite[Theorem~12.1]{lackenby:efficient-certification-knottedness} (again, see the statement in \zcref{thm:interval bundle recognition is in NP}).
The key observation to keep in mind here is that, even though the parallelity bundle $Y$ could be huge, we do not need to deal with it, since it is guaranteed to be an interval bundle; conversely, we have no guarantee on the guts $X$, but the size of the triangulation of $X$ is polynomial in $t$ (see \zcref{thm:triangulation of the guts is small}).

Unfortunately, the annuli $X\cap Y$ will not in general be vertical in $M'$.
However, this optimistic situation forms the core of our certificate.
One can show, as we have done in \zcref{thm:parallelity bundle of a least-weight fibre}, that if the fibre $F$ is chosen to be least\=/weight, then each annulus in $X\cap Y$ is either vertical, or it bounds a ``tube'' in $M'$; the second case is depicted in \zcref{fig:cutting a disc cross I off:1,fig:cutting a disc cross I off:2}.
This means that we can decompose $M'$ into three submanifolds:
\begin{itemize}
\item the union $Y_v$ of the parallelity components that are vertical in $M'$;
\item the remaining parallelity components $Y_t$, that are contained in a union of tubes $T$;
\item the guts $X$.
\end{itemize}
The reader can find a graphical representation this decomposition in \zcref{fig:decomposition induced by a least-weight fibre:overview}, where $Y_v$ is denoted by $\posnbhd{F_v}$ and $Y_t$ is denoted by $\posnbhd{F_0}\cup\negnbhd{F_1}$.
The reason for the different notation is that working with subsurfaces of $F$ rather than submanifolds of $M'$ is more convenient for algorithmic purposes.
The relevant subsurfaces are $F_g$, $F_v$, $F_0,$ and $D_0$, representing the traces on $\boundary_0M'\homeo F$ of $X$, $Y_v$, $Y_t$, and $T$ respectively; the subsurfaces $F_g'$, $F_v'$, $F_1$, and $D_1'$ play the same role for $\boundary_1 M'\homeo F$.

In order to show that $M$ is fibred, we will then provide a certificate consisting of a least\=/weight fibre $F$ and a decomposition of $M'$ into the three submanifolds $X$, $Y_v$, and $Y_t$ (as explained above, this decomposition is actually given in terms of subsurfaces of $F$).
The submanifold $Y_t$ could be very large, but -- since it is a product interval bundle over planar surfaces -- it can be retriangulated with a number of tetrahedra that is linear in $t$, as shown in \zcref{thm:triangulating interval bundles efficiently}.
By attaching this triangulation to the standard triangulation of the guts $X$, we obtain a triangulated $3$\=/manifold $N$ that is homeomorphic to $X\cup Y_t$, such that the size of the triangulation of $N$ is linear in $t$ (recall that the triangulation of the guts is ``small'', as seen in \zcref{thm:triangulation of the guts is small}).
Like in the above simplified case, the parallelity components $Y_v$ could be huge, but we do not need to deal with them.
All the verifier needs to do is check that the $3$\=/manifold $N$ (with a suitable pre\=/sutured manifold structure) is a product interval bundle, which again can be done thanks to \cite[Theorem~12.1]{lackenby:efficient-certification-knottedness}.
This will guarantee that $N$ and $Y_v$ glue along their common boundary to give a product interval bundle structure to $M'$.

\begin{myfigurepage}
\myfiguresource[scale=.25]{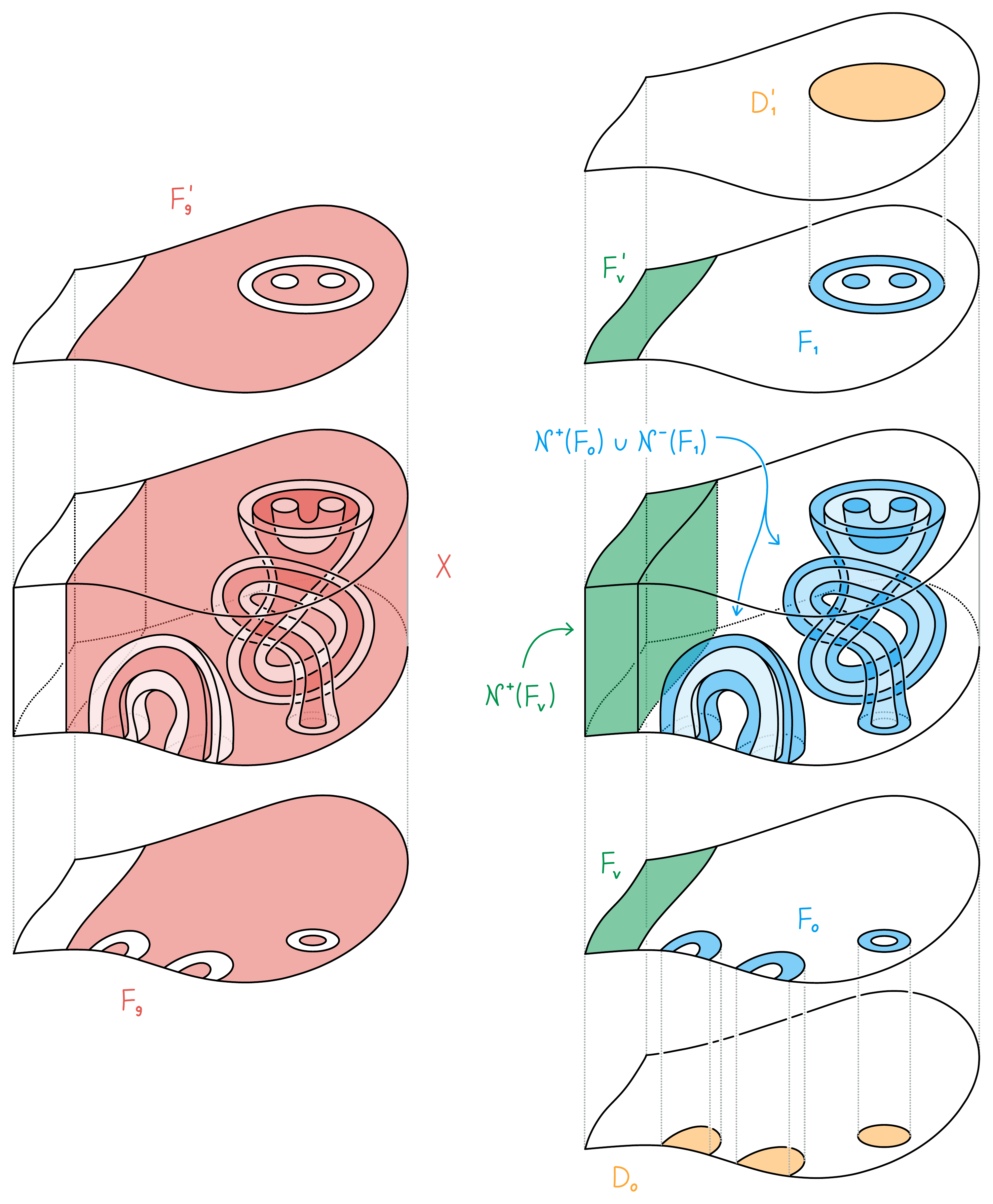}
\caption{Some objects that are part of the fibredness certificate.
The top and bottom portions of the figure represent sub\=/$2$\=/complexes of $F$, namely $F_v$, $F_0$, $F_1$, $F_g$, $F_v'$, $F_g'$, and $D_1'$.
Note that $D_1'$ is not technically part of the certificate, but it is used in its construction.
The middle row of the figure depicts a portion of $M'$.
Alluding to the fact that, by \zcref{thm:correctness of fibredness certificate}, if the certificate is valid then $F$ is a fibre of $M$, we have drawn $M'$ as an interval bundle, with $\boundary_0 M'=\pospushoff{F}$ at the bottom and $\boundary_1 M'=\negpushoff{F}$ at the top.
On the left, we show the guts $X$ of $M'$.
On the right, the parallelity bundle $Y$ of $M'$ is shown, partitioned into the sub\=/$3$\=/complexes $\posnbhd{F_v}$ and $\posnbhd{F_0}\cup\negnbhd{F_1}$.}
\label{fig:decomposition induced by a least-weight fibre:overview}
\end{myfigurepage}

To (loosely) summarise, not all parallelity components are guaranteed to be vertical in $M'$.
However, the ones that are not are contained in tubes.
We retriangulate these tubes with few tetrahedra and attach them to the guts to obtain a $3$\=/manifold $N\homeo X\cup Y_t$.
The triangulation of $N$ is given as part of the certificate, together with an auxiliary certificate (provided by \cite[Theorem~12.1]{lackenby:efficient-certification-knottedness}) showing that $N$ is a product interval bundle.
The verifier can check that $N$ is indeed a product interval bundle, and this guarantees that $M'=(X\cup Y_t)\cup Y_v$ is also a product interval bundle -- or, in other words, that $F$ is a fibre of $M$.
% This certificate can also be used to recover the monodromy of the fibration; however, we defer this result to \cite{baroni:certifying-hyperbolicity-fibred}.

\subsection{Construction of the certificate}
\label{sec:certificate:construction}

Let $\TTT$ be a triangulation of a compact connected oriented $3$\=/manifold $M$, and let $F$ be a transversely oriented least\=/weight normal fibre of $M$.
We now endeavour to construct a certificate of the fact that $F$ is a fibre of $M$; we encourage the reader to refer to \zcref{fig:decomposition induced by a least-weight fibre:overview} for a visual representation of this construction.
Let $M'=M\cut F$, and denote by $X$ and $Y$ the guts and the parallelity bundle of $M'$ respectively.
Let $F_g$ and $F_g'$ be the sub\=/$2$\=/complexes of $F$ such that $\pospushoff{F_g}=\boundary_0 X$ and $\negpushoff{(F_g')}=\boundary_1 X$.

\paragraph{The sub-2-complexes $F_v$, and $F_v'$, $F_0$, and $F_1$.}
Let $A$ be the union of the components of $\thin{X}\cap\thick{Y}$ that are vertical in $M'$, and let $A'$ be the union of the other components; by \zcref{thm:parallelity bundle of a least-weight fibre}, every component of $A'$ cuts a $D^2\times[0,1]$ off of $M'$.
Note that if a component of $\thick{Y}$ intersects $A$, then the interval bundle structure of that component agrees with the interval bundle structure of $M'$; in particular, said component does not intersect $A'$.
We let $F_v$ be the largest sub\=/$2$\=/complex of $F$ such that $\thick{\posnbhd{F_v}}$ is the union of the components of $\thick{Y}$ that intersect $A$; by the above remark, we see that $\thick{\posnbhd{F_v}}$ is disjoint from $A'$.
Similarly, denote by $F_v'$ the largest sub\=/$2$\=/complex of $F$ such that $\negnbhd{F_v'}=\posnbhd{F_v}$.
We then let $F_0$ and $F_1$ be the sub\=/$2$\=/complexes of $F$ such that the intersection of the remaining components of $\thick{Y}$ -- that is, those that intersect $A'$ -- with $\hboundary M'$ is $\thick{\pospushoff{F_0}\cup\negpushoff{F_1}}$.
We also have that $F_v$ is the closure in $F$ of $F\setminus(F_g\cup F_0)$, and $F_v'$ is the closure in $F$ of $F\setminus(F_g'\cup F_1)$.
In particular, we remark that $F_0$ and $F_v$ are disjoint, and so are $F_1$ and $F_v'$.
Moreover, we have that $F_0$ and $F_g$ have disjoint interiors, and so do $F_1$ and $F_g'$.

\paragraph{The sub-2-complexes $D_0$ and $D_1'$.}
Consider a component $C$ of $\thick{\posnbhd{F_0}}$, endowed with its intrinsic interval bundle structure; in particular, we have that $\vboundary C\subs A'\cup\vboundary M'$.
Recall that every component of $A'$ cuts a $D^2\times[0,1]$ off of $M'$.
There must be a component of $A'\cap C$ that cuts a $D^2\times[0,1]$ containing $C$ off of $M'$; otherwise, the interval bundle structure of $C$ would agree with that of $M'$, contradicting the fact that $C$ intersects $A'$.
Let $A''$ be such a component of $A'\cap C$, and denote by $B$ a $D^2\times[0,1]$ cut off by $A''$ that contains $C$.
It is clear from \zcref{fig:cutting a disc cross I off} that the intersection $\boundary_0 M'\cap B$ is $\thick{\pospushoff{E}}$ for a sub\=/$2$\=/complex $E$ of $F$ such that $\thick{E}$ is a union of $0$, $1$, or $2$ disjoint discs, each of which intersects $\boundary F$ in at most one arc.

Let $\DDD$ be the set of all the sub\=/$2$\=/complexes obtained this way; more precisely, set
\[
\DDD=\left\{E:
\begin{matrix*}[l]
\text{$C$ is a component of $\thick{\posnbhd{F_0}}$,}\\
\text{$A''$ is a component of $C\cap A'$,}\\
\text{$B$ is a $D^2\times[0,1]$ cut off of $M'$ by $A''$ such that $C\subs B$,}\\
\text{$E$ is a sub\=/$2$\=/complex of $F$ such that}\\
\text{$\thick{\pospushoff{E}}$ is a component of $\boundary_0M'\cap B$}
\end{matrix*}
\right\}.
\]
We claim that it is impossible for two sub\=/$2$\=/complexes $E_1$ and $E_2$ in $\DDD$ to intersect but not be contained one in the other.
Suppose for a contradiction that this is the case, and let $B_i=\thick{\posnbhd{E_i}}$ for $i\in\{1,2\}$, with the interval bundle structure induced by the fact that each of them is a $D^2\times[0,1]$ cut off by some non\=/vertical annulus or square, say $A_1$ and $A_2$ respectively.
Note that $\thick{E_1}$ and $\thick{E_2}$ are two discs in $F$ that intersect, are not contained one in the other, and such that $\boundary_F\thick{E_1}$ and $\boundary_F\thick{E_2}$ are disjoint.
Therefore, it follows that $F=\thick{E_1}\cup\thick{E_2}$ -- in particular, the surface $F$ is a sphere or a disc -- and hence that $M'=B_1\cup B_2$.
Moreover, the interval bundle structures of $B_1$ and $B_2$ agree on their intersection, and therefore extend to an interval bundle structure on $M'$, which has to agree with the original one on $M'$, since it has the same horizontal boundary.
However, this contradicts the fact that $A_1$ and $A_2$ are not vertical in $M'$.

We now let $D_0$ be the union of all the sub\=/$2$\=/complexes in $\DDD$.
By the claim above, any two maximal sub\=/$2$\=/complexes in $\DDD$ are disjoint, and therefore each component of $\thick{D_0}$ is a disc intersecting $\boundary F$ in a single (possibly empty) arc.
Moreover, it follows from the definition of $\DDD$ that $F_0$ is contained in $D_0$.
Finally, we show that $D_0$ is disjoint from $F_v$.
To this aim, let $E$ be an element of $\DDD$, and let $B=\posnbhd{E}$.
Note that every component of $\thick{Y}$ is either disjoint from or contained in $B$.
If, for the sake of contradiction, some component $C$ of $\thick{\posnbhd{F_v}}$ is contained in $B$, then $B$ contains a vertical arc.
It is then easy to see that this contradicts the assumption that $B$ is a $D^2\times[0,1]$ cut off by a non\=/vertical annulus or square; we refer the reader to \zcref{fig:cutting a disc cross I off} once again.
This shows that $D_0$ and $F_v$ are disjoint.

Repeating the same exact constructions for components of $\thick{\negnbhd{F_1}}$, we obtain a sub\=/$2$\=/complex $D_1'$ of $F$ such that $F_1\subs D_1'$ and $\thick{D_1'}$ is a union of disjoint discs, each of which intersects $\boundary F$ in a single arc; moreover, the sub\=/$2$\=/complex $\negpushoff{(D_1')}$ of $\boundary_1 M$ is disjoint from $\posnbhd{F_v}$.
We remark that, by constructing $D_0$ and $D_1'$, we have also indirectly proved that $\thick{F_0}$ and $\thick{F_1}$ are planar surfaces.

\paragraph{The 3-manifolds $N$, $N'$, $N''$ and the sub-2-complexes $G_0''$ and $G_1''$.}
Let $N_0'=X$, $N_0''=\posnbhd{F_0}\cup\negnbhd{F_1}$, $N_0=N_0'\cup N_0''$, $N=\abstr{N_0}$.
We also let $N'=\embedd[N_0]^{-1}(N_0')$ and $N''=\embedd[N_0]^{-1}(N_0'')$, noting that $N=N'\cup N''$, and that $N'$ and $N''$ have disjoint interiors.
Moreover, we see that $\thin{N_0}$ is the closure in $M'$ of a union of components of $M'\setminus A$.
Since $A$ is vertical in $M'$, we have that $\thin{N_0}$ is a product interval bundle over a compact orientable surface -- namely $\thin{F_g\cup F_0}$; this implies that $N$ is a product interval bundle over $G=\abstr{F_g\cup F_0}$.
This interval bundle structure has $\boundary_i N=\embedd[N_0]^{-1}(\boundary_i M')$ for $i\in\{0,1\}$.

Let $N_0''$ inherit the intrinsic pre\=/sutured subcomplex structure from the parallelity bundle of $M'$.
This induces a pre\=/sutured subcomplex structure on $N''$, with
\[
\hboundary N''=\embedd[N_0]^{-1}(\hboundary N_0'')=\embedd[N_0]^{-1}(\pospushoff{F_0}\cup\negpushoff{F_1}),\quad\vboundary N''=\embedd[N_0]^{-1}((X\cup\vboundary M')\cap N_0'').
\]
For $i\in\{0,1\}$, let $G_i''=\embedd[N_0]^{-1}(\boundary_i N_0'')$, so that $G_0''\cap G_1''=\emptyset$, and $G_0''\cup G_1''=\hboundary N\cap N''$.
Moreover, we remark that $\abstr{N''}\homeo \abstr{G_0''}\times[0,1]$.

\paragraph{The map $f_g$ and the sub-2-complex $D_1$.}
Let $\map{f_g}{N'}{X}$ be the restriction of $\embedd[N_0]$ to $N'$.
This is a simplicial map, that can be realised by gluing together the edges of $N'$ that are identified in $M'$ under $\embedd[N_0]$; more precisely, all the identified edges lie in the intersection of the guts $X$ with $\posnbhd{F_v}$.
It is then easy to establish the following facts about $f_g$:
\begin{itemize}
\item $f_g$ restricts to an orientation\=/preserving homeomorphism $\umap{\interior{N'}}{\interior{X}}$;
\item $f_g(\boundary_i N\cap N')=\boundary_iX$ for $i\in\{0,1\}$;
\item $f_g(N''\cap N')=(\posnbhd{F_0}\cup\negnbhd{F_1})\cap X$;
\item $f_g$ restricts to a simplicial isomorphism
\[
\umap{N'\cap\hboundary N''}{\pospushoff{(\boundary_F F_0)}\cup\negpushoff{(\boundary_F F_1)}},
\]
and this restriction extends to an orientation\=/preserving homeomorphism $\umap{\hboundary N''}{\pospushoff{F_0}\cup\negpushoff{F_1}}$.
\end{itemize}
The extension in the last bullet point is simply the restriction of $\embedd[N_0]$ to $\hboundary N''$.

Let $D_1=\embedd[N_0]^{-1}(\negpushoff{(D_1')})$.
Recall that each component of $\thick{D_1'}$ is a disc intersecting $\boundary F$ in a single (possibly empty) arc, and that $\negpushoff{(D_1')}$ is disjoint from $\posnbhd{F_v}$.
It follows that each component of $\thick{D_1}$ is a disc intersecting $\boundary\boundary_1 N$ in a single (possibly empty) arc, and moreover that $f_g(D_1\cap N')$ is disjoint from $\posnbhd{F_v}$.
Finally, we remark that
\[
\boundary_1 N\cap N''=\embedd[N_0]^{-1}(\negpushoff{F_1})\subs\embedd[N_0]^{-1}(\negpushoff{(D_1')})=D_1.
\]

\paragraph{The triangulation $\RRR$.}
Let $\RRR'$ be the pull\=/back of the triangulation of the guts $X$ under $\embedd[N_0]$; this defines a triangulation of the sub\=/$3$\=/complex $N'$ of $N$.
Note that every component of $\vboundary N''$ intersects $N'$, since if -- say -- $F_0$ contained a boundary component of $F$, then it would be impossible for $D_0$ to contain $F_0$.
Since $\abstr{N''}\homeo \abstr{G_0''}\times[0,1]$ is a product interval bundle over a planar surface, we can apply \zcref{thm:triangulating interval bundles efficiently} to obtain a suitable pre\=/sutured triangulation $\RRR''$ of $N''$ that agrees with $\RRR'$ on $N'\cap N''$, and such that the restriction of $\RRR''$ to $\hboundary N''$ is flapless.
The two triangulations $\RRR'$ and $\RRR''$ glue together to form a triangulation $\RRR$ of $N$; since $\hboundary N$ is simplicial in this triangulation, we can endow $\RRR$ with the structure of a suitable pre\=/sutured triangulation of $N$.
Since the restrictions of $\RRR'$ to $\hboundary N\cap N'$ and of $\RRR''$ to $\hboundary N''$ are both flapless, we deduce that the same is true of the restriction of $\RRR$ to $\hboundary N$.

\paragraph{Quantitative bounds.}
We now demonstrate that the objects we have constructed are ``small'' with respect to the size $t$ of the triangulation $\TTT$.
Start by recalling that $F_g$ is simplicially isomorphic to $\boundary_0 X$; \zcref{thm:triangulation of the guts is small} then implies that $\area{F_g}\le 32t$.
Moreover, since the triangulation of $F_g$ is flapless, we have that $\length{\boundary F_g}\le\area{F_g}\le 32t$.
Since $\boundary_F F_0$ is contained in $\boundary F_g$, it immediately follows that $\length{\boundary_F F_0}\le 32t$; the same argument used for $F_0$ also shows that $\length{\boundary_F F_1}\le 32t$.
By construction, we have that $\boundary_F D_0\subs\boundary_F F_0$, and hence $\length{\boundary_F D_0}\le 32t$ as well.
Finally, we recall that $D_1=\embedd[N_0]^{-1}(\negpushoff{(D_1')})$, and that $\boundary_F D_1'\subs\boundary_F F_1$, so we deduce that $\length{\boundary D_1}=\length{\boundary_F D_1'}\le 32t$.

We now turn to the size of the triangulation $\RRR$ of $N$.
The size of $\RRR'$ is addressed by \zcref{thm:triangulation of the guts is small}, giving that the number of tetrahedra of $\RRR'$ is at most $50t$.
A bound on the size of $\RRR''$ comes from \zcref{thm:triangulating interval bundles efficiently}, provided that we can estimate the number of triangles in $N'\cap N''$.
Note that $N'\cap N''$ is a union of triangles in $\embedd[N_0]^{-1}(\vboundary X)$, so \zcref{thm:triangulation of the guts is small} bounds its area by $36t$.
\zcref[S]{thm:triangulating interval bundles efficiently} then gives a bound of $936t$ for the size of $\RRR''$, and hence a bound of $986t$ for the size of $\RRR$.

\subsection{Definition of the certificate}

We now give a more ``axiomatic'' definition of the fibredness certificate constructed in \zcref{sec:certificate:construction}.
The purpose of this is to achieve a clear separation of the statements we need to prove:
\begin{itemize}
\item if $F$ is a fibre of $M$, then it admits a valid certificate (\zcref{thm:existence of fibredness certificate});
\item if $F$ admits a valid certificate, then $F$ is a fibre of $M$ (\zcref{thm:correctness of fibredness certificate});
\item the validity of a certificate can be checked in polynomial time (\zcref{thm:verification of fibredness certificate}).
\end{itemize}
Given the number and complexity of the objects and properties involved, we believe this -- admittedly cumbersome -- axiomatic definition is necessary.

\begin{definition}[Certificate for fibredness]
\label{def:certificate}
Let $\TTT$ be a triangulation of a compact connected oriented $3$\=/manifold $M$ with $t$ tetrahedra, and let $F$ be a transversely oriented connected normal surface in $M$.
Let $M'=M\cut F$, and denote by $X$ the guts of $M'$.
We say that a certificate $\Sigma$ lies in $\cert[fib](M,F)$ if it consists of:
\begin{itemize}
\item sub\=/$2$\=/complexes $F_0$, $F_1$, $D_0$ of $F$;
\item a pre\=/sutured triangulation $\RRR$ of a compact oriented $3$\=/manifold $N$;
\item a sub\=/$2$\=/complex $D_1$ of $\boundary_1 N$;
\item sub\=/$3$\=/complexes $N'$ and $N''$ of $N$;
\item sub\=/$2$\=/complexes $G_0''$ and $G_1''$ of $\hboundary N$;
\item a simplicial map $\map{f_g}{N'}{X}$,
\item two additional certificates $\Sigma'$ and $\Sigma''$ of the type recognised by the algorithm of \zcref{thm:interval bundle recognition is in NP}.
\end{itemize}

We define the \emph{size} of $\Sigma$ to be the number
\begin{align*}
\card{\Sigma}&=t+\log(w(F)+1)+\card{\RRR}+\card{\Sigma'}+\card{\Sigma''}\\
&{}+\length{\boundary_F F_0}+\length{\boundary_F F_1}+\length{\boundary_F D_0},
\end{align*}
where $\card{\Sigma'}$ and $\card{\Sigma''}$ are the sizes of the certificates $\Sigma'$ and $\Sigma''$ respectively.
We remark that the certificate $\Sigma$ can be described with a number of binary digits that is polynomial in $\card{\Sigma}$.

Let $F_g$ and $F_g'$ be the sub\=/$2$\=/complexes of $F$ such that $\pospushoff{F_g}=\boundary_0 X$ and $\negpushoff{(F_g')}=\boundary_1 X$.
Denote by $F_v$ the closure of $F\setminus(F_g\cup F_0)$ in $F$, and by $F_v'$ the closure of $F\setminus(F_g'\cup F_1)$ in $F$.
We say that $\Sigma$ is \emph{valid}, and write $\Sigma\in\cert*[fib](M,F)$, if the following conditions are satisfied:
\newcommand{\certitem}[1]{\item\label{itm:fibredness certificate:#1}}
\begin{enumgrouped}
\begin{itemgroup}[itm:fibredness certificate:Fs][reftype=propertygroup]
\zcsetup{reftype=property}
\certitem{Fg F0 disjoint} $F_g$ and $F_0$ have disjoint interiors;
\certitem{F0 Fv disjoint} $F_0$ and $F_v$ are disjoint;
\certitem{Fg' F1 disjoint} $F_g'$ and $F_1$ have disjoint interiors;
\certitem{F1 Fv' disjoint} $F_1$ and $F_v'$ are disjoint;
\certitem{N+(Fv) vertical} $\posnbhd{F_v}=\negnbhd{F_v'}$;
\end{itemgroup}
\begin{itemgroup}[itm:fibredness certificate:N][reftype=propertygroup]
\zcsetup{reftype=property}
\certitem{N is I-bundle} $N\homeo G\times[0,1]$ for some compact orientable surface $G$;
\certitem{R suitable triangulation} $\RRR$ is a suitable pre\=/sutured triangulation of $N$;
\certitem{R flapless triangulation} the restriction of $\RRR$ to $\hboundary N$ is flapless;
\certitem{decomposition of N} $N=N'\cup N''$, and $N'$ and $N''$ have disjoint interiors;
\certitem{Gi'' disjoint} $G_0''$ and $G_1''$ are disjoint;
\certitem{Gi'' union} $G_0''\cup G_1''=\hboundary N\cap N''$;
\certitem{N'' is I-bundle} $\abstr{N''}\homeo G''\times[0,1]$ for some compact orientable surface $G''$;
\certitem{R'' suitable triangulation}$\RRR''$ is a suitable pre\=/sutured triangulation of $N''$, where $\RRR''$ is the pre\=/sutured triangulation of $N''$ that, as a triangulation, is simply the restriction of $\RRR$ to $N''$, and such that $\boundary_i N''=G_i''$ for $i\in\{0,1\}$;
\end{itemgroup}
\begin{itemgroup}[itm:fibredness certificate:f][reftype=propertygroup]
\zcsetup{reftype=property}
\certitem{fg homeomorphism} $f_g$ restricts to an orientation\=/preserving homeomorphism $\umap{\interior{N'}}{\interior{X}}$;
\certitem{fg of horizontal boundary} $f_g(\boundary_i N\cap N')=\boundary_i X$ for $i\in\{0,1\}$;
\certitem{fg of vertical boundary} $f_g(N'\cap N'')=(\posnbhd{F_0}\cup\negnbhd{F_1})\cap X$;
\certitem{fg extends to horizontal boundary} $f_g$ restricts to a simplicial isomorphism
\[
\umap{N'\cap\hboundary N''}{\pospushoff{(\boundary_F F_0)}\cup\negpushoff{(\boundary_F F_1)}},
\]
and this restriction extends to an orientation\=/preserving homeomorphism $\umap{\hboundary N''}{\pospushoff{F_0}\cup\negpushoff{F_1}}$;
\end{itemgroup}
\begin{itemgroup}[itm:fibredness certificate:D][reftype=propertygroup]
\zcsetup{reftype=property}
\certitem{D0 union of discs} each component of $\thick{D_0}$ is a disc intersecting $\boundary F$ in a single (possibly empty) arc;
\certitem{F0 in D0} $F_0\subs D_0$;
\certitem{D0 disjoint from Fv} $D_0$ is disjoint from $F_v$;
\certitem{D1 union of discs} each component of $\thick{D_1}$ is a disc intersecting $\boundary\boundary_1 N$ in a single (possibly empty) arc;
\certitem{F1 in D1} $\boundary_1 N\cap N''\subs D_1$;
\certitem{D1 disjoint from boundary} $f_g(D_1\cap N')$ is disjoint from $\posnbhd{F_v}$;
\end{itemgroup}
\begin{itemgroup}[itm:fibredness certificate:quantitative][reftype=propertygroup]
\zcsetup{reftype=property}
\certitem{bounds on surfaces} $\max\{\length{\boundary_F F_0},\length{\boundary_F F_1},\length{\boundary_F D_0},\length{\boundary D_1}\}\le 32t$;
\certitem{size of R} $\RRR$ has at most $986t$ tetrahedra;
\end{itemgroup}
\begin{itemgroup}[itm:fibredness certificate:aux][reftype=propertygroup]
\zcsetup{reftype=property}
\certitem{Sigma' valid} $\Sigma'$ certifies that $\RRR$ is a suitable pre\=/sutured triangulation of $N$ (according to the algorithm of \zcref{thm:interval bundle recognition is in NP}), and its size is polynomial in $\card{\RRR}$;
\certitem{Sigma'' valid} $\Sigma''$ certifies that the pull\=/back of $\RRR''$ under $\embedd[N'']$ is a suitable pre\=/sutured triangulation of $\abstr{N''}$ (according to the algorithm of \zcref{thm:interval bundle recognition is in NP}), and its size is polynomial in $\card{\RRR''}$.\qedhere
\end{itemgroup}
\end{enumgrouped}
\end{definition}

In our exposition in \zcref{sec:certificate:construction}, we have taken particular care to highlight how the certificate we constructed satisfies all the properties of \zcref{def:certificate}; the only additional remark we should make is that the certificates $\Sigma'$ and $\Sigma''$ are provided by \zcref{thm:interval bundle recognition is in NP}.
Therefore, we can state the following.

\begin{proposition}[Existence of the fibredness certificate]
\label{thm:existence of fibredness certificate}
Let $\TTT$ be a triangulation of a compact connected oriented fibred $3$\=/manifold $M$, and let $F$ be a transversely oriented least\=/weight normal fibre of $M$.
Then $\cert*[fib](M,F)$ is non\=/empty.
\end{proposition}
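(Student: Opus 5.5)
The plan is to exhibit an explicit element of $\cert[fib](M,F)$ and check every condition of \zcref{def:certificate}. The element is the one built in \zcref{sec:certificate:construction}. Starting from the least\=/weight fibre $F$, put $M'=M\cut F$, with guts $X$ and parallelity bundle $Y$. Take $F_0$, $F_1$, $D_0$ as constructed there, using \zcref{thm:parallelity bundle of a least-weight fibre} to split the components of $\thin{X}\cap\thick{Y}$ into the vertical ones $A$ and those $A'$ that cut off a $D^2\times[0,1]$. Then take $N=\abstr{N_0}$ with its decomposition into $N'$ and $N''$, the surfaces $G_0''$ and $G_1''$, the map $f_g$, the complex $D_1$, and the triangulation $\RRR=\RRR'\cup\RRR''$. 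Finally, let $\Sigma'$ and $\Sigma''$ be the certificates supplied by \zcref{thm:interval bundle recognition is in NP}. These exist because $N$ and $\abstr{N''}$ are product interval bundles, and their sizes are polynomial in $\card{\RRR}$ and $\card{\RRR''}$ respectively.

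The first key step is to check that the objects defined intrinsically in the definition agree with those in the construction. The definition sets $F_v=\closure{F\setminus(F_g\cup F_0)}$, and the construction showed that the largest sub\=/$2$\=/complex with $\thick{\posnbhd{F_v}}$ equal to the union of the components of $\thick{Y}$ meeting $A$ is exactly this closure. The same holds for $F_v'$. Once this is settled, the properties in \zcref{itm:fibredness certificate:Fs} are the disjointness remarks made in the construction. The equality $\posnbhd{F_v}=\negnbhd{F_v'}$ holds because those parallelity components carry the interval bundle structure of $M'$.

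Next, the properties in \zcref{itm:fibredness certificate:N} come from the paragraph on $N$. The set $\thin{N_0}$ is a union of closures of components of $M'\setminus A$ cut along vertical surfaces, so $N\homeo G\times[0,1]$ by \zcref{thm:essential surfaces in interval bundles are vertical}. The identification $\abstr{N''}\homeo\abstr{G_0''}\times[0,1]$ comes from the intrinsic structure. Suitability and flaplessness of $\RRR$ and $\RRR''$ follow from \zcref{thm:triangulation of the guts is small}, from \zcref{rmk:the triangulation of a normal surface is flapless}, and from \zcref{thm:triangulating interval bundles efficiently}. The last of these applies because every component of $\vboundary N''$ meets $N'$, and this was argued from $F_0\subs D_0$.

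The properties in \zcref{itm:fibredness certificate:f} are the bullet list for $f_g$. The properties in \zcref{itm:fibredness certificate:D} come from the analysis of $\DDD$, in particular the claim that any two elements of $\DDD$ are nested or disjoint, and from the disjointness of $D_0$ and $D_1'$ from $F_v$. The quantitative properties in \zcref{itm:fibredness certificate:quantitative} are the bounds $32t$ and $50t+26\cdot36t=986t$ computed at the end of the construction.

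The only genuine work beyond bookkeeping lies in two places. The first is matching the orientations and the sutured structures, namely that $\boundary_iN=\embedd[N_0]^{-1}(\boundary_iM')$ and that $\RRR''$ with $\boundary_iN''=G_i''$ is suitable. For this I would use the convention fixing $\boundary_iC$ for product components that meet both sides. The second, which I expect to be the main obstacle, is confirming that the retriangulation of $N''$ glues compatibly with $\RRR'$ along $N'\cap N''$. Here I would rely on property~(1) of \zcref{thm:triangulating interval bundles efficiently}, with $\Omega\times[0,1]$ taken to be $N'\cap N''$.
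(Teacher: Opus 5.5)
Your proposal is correct and follows the paper's argument: take the objects built in \zcref{sec:certificate:construction}, read off each axiom of \zcref{def:certificate} from the remarks made there, and take $\Sigma'$ and $\Sigma''$ from \zcref{thm:interval bundle recognition is in NP}. Two small adjustments. First, $N\homeo G\times[0,1]$ does not need \zcref{thm:essential surfaces in interval bundles are vertical}: $A$ is vertical by definition, and $\thin{N_0}$ is cut out of $M'\homeo F\times[0,1]$ along it (that lemma only enters upstream, through \zcref{thm:parallelity bundle of a least-weight fibre}). Second, the gluing of $\RRR''$ to $\RRR'$ is not really an obstacle, since it is exactly property~(1) of \zcref{thm:triangulating interval bundles efficiently}.
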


\subsection{Correctness of the certificate}

\begin{lemma}[Basic properties of the fibredness certificate]
\label{thm:basic properties of fibredness certificate}
With the notation of \zcref{def:certificate}, if $\Sigma\in\cert*[fib](M,F)$, then the following hold:
\begin{enumarabic}
\zcsetup{reftype=property}
\item $F_g$, $F_g'$, $F_v$, $F_v'$, $F_0$, and $F_1$ are unions of normal discs;
\item\label{itm:basic properties of fibredness certificate:parallelity components} $\posnbhd{F_v}$, $\posnbhd{F_0}$, and $\negnbhd{F_1}$ are unions of parallelity components of $M'$;
\item the parallelity bundle of $M'$ is the union of $\posnbhd{F_v}$ and $\posnbhd{F_0}\cup\negnbhd{F_1}$;
\item $(\posnbhd{F_0}\cup\posnbhd{F_1})\cap\hboundary M'=\pospushoff{F_0}\cup\negpushoff{F_1}$;
\item $\posnbhd{F_v}$ and $\posnbhd{F_0}\cup\negnbhd{F_1}$ are disjoint;
\item $f_g$ restricts to a homeomorphism $\umap{f_g^{-1}(X')}{X'}$, where $X'$ is the complement in $X$ of the singular locus of $X$;
\item\label{itm:basic properties of fibredness certificate:Fi planar} $\thick{F_0}$ and $\thick{F_1}$ are planar surfaces;
\item\label{itm:basic properties of fibredness certificate:area of Fg} $\area{F_g}\le 32t$ and $\area{F_g'}\le 32t$;
\end{enumarabic}
\end{lemma}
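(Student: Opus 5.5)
We will derive each item directly from the validity conditions of \zcref{def:certificate}, proceeding roughly in the order listed.

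\textbf{Normal discs.} $F_g$ and $F_g'$ are, by definition, the preimages under the push-offs of $\boundary_0X$ and $\boundary_1X$. These are unions of normal discs of $\hboundary M'$, since a normal disc of $\hboundary M'$ either bounds a gut piece or does not. Hence $F_g$ and $F_g'$ are unions of normal discs of $F$. For $F_0$ we use \zcref{itm:fibredness certificate:fg extends to horizontal boundary}: the edges of $\boundary_F F_0$ are carried by $f_g$ onto edges of $\pospushoff{(\boundary_F F_0)}$ lying in $X\cap\hboundary M'$. Since $F_0$ has interior disjoint from $F_g$ (\zcref{itm:fibredness certificate:Fg F0 disjoint}), these edges lie on boundaries of normal discs, not in the interior of a normal disc. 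So $F_0$, being a sub\=/$2$\=/complex whose frontier consists of normal-disc boundary arcs, is a union of normal discs; the same holds for $F_1$. Then $F_v$ and $F_v'$ are unions of normal discs as closures of complements.

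\textbf{Parallelity components.} Every normal disc $D$ of $F$ not in $F_g$ has $\pospushoff D$ in a parallelity piece. Following the transfer map from a disc of $F_v$, we stay in $F_v$: \zcref{itm:fibredness certificate:N+(Fv) vertical} says $\posnbhd{F_v}=\negnbhd{F_v'}$, so the other end of each parallelity piece of $\posnbhd{F_v}$ lies in $\negpushoff{(F_v')}$. Crossing a face of a piece into an adjacent piece either stays in $\posnbhd{F_v}$ or enters $X$. This shows $\posnbhd{F_v}$ is a union of parallelity components. Complementarily, $\hboundary M'\setminus X$ is partitioned into $\pospushoff{F_v}\cup\negpushoff{F_v'}$ and $\pospushoff{F_0}\cup\negpushoff{F_1}$. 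The disjointness conditions \zcref{itm:fibredness certificate:F0 Fv disjoint,itm:fibredness certificate:F1 Fv' disjoint} show that no parallelity piece joins the two parts, which yields properties 2--5 simultaneously. Property 4 also follows since each parallelity piece meeting $\pospushoff{F_0}$ has its other end in $\pospushoff{F_0}\cup\negpushoff{F_1}$.

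\textbf{Remaining items.} For property 6, \zcref{itm:fibredness certificate:fg homeomorphism} gives a homeomorphism on interiors. On boundary points of $X$ away from the singular locus, $f_g$ is a simplicial map that is locally injective; injectivity there follows because the only possible identifications of a simplicial map which is a homeomorphism on the interior occur at points with disconnected local preimage, that is, at the singular locus. For property 7, \zcref{itm:fibredness certificate:F0 in D0,itm:fibredness certificate:D0 union of discs} place $\thick{F_0}$ in a disjoint union of discs, so it is planar. For $F_1$ we transport through $f_g$ and the homeomorphism $\hboundary N''\to\pospushoff{F_0}\cup\negpushoff{F_1}$ of \zcref{itm:fibredness certificate:fg extends to horizontal boundary}, together with \zcref{itm:fibredness certificate:F1 in D1,itm:fibredness certificate:D1 union of discs}, which embed $\thick{F_1}$ in a union of discs in $\boundary_1N$. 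Property 8 is immediate from \zcref{thm:triangulation of the guts is small}(ii), since push-offs are simplicial isomorphisms.

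The main obstacle is property 7 for $F_1$. There we must check that the planar-embedding argument really passes through $f_g$, which is only an isomorphism on $N'\cap\hboundary N''$ and extends merely as a homeomorphism. Property 6 at singular points also needs care.
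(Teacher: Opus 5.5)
Your proposal is correct and follows essentially the same item-by-item route as the paper: planarity via $D_0$, $D_1$ and transport through $f_g$, item 6 via connectedness of the interior near non-singular points, and item 8 from the area bound on $\hboundary X$. The one real variation is in items 1--2. There the paper observes directly from the validity conditions that $F_0$ and $F_v$ are unions of components of $\overline{F\setminus F_g}$, and likewise that $F_1$ is a union of components of $\overline{F\setminus F_g'}$. This gives the normal-disc property without invoking $f_g$. It also shows that $\posnbhd{F_0}$ and $\negnbhd{F_1}$ are \emph{individually} unions of parallelity components, whereas your complement argument only gives this for their union; your face-crossing argument would cover each one verbatim if you applied it.
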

\begin{proof}
We prove each statement separately.
\begin{enumarabic}
\item By definition, the surface $F_g$ is a union of normal discs.
\zcref[S]{itm:fibredness certificate:F0 Fv disjoint,itm:fibredness certificate:Fg F0 disjoint} imply that $F_0$ and $F_v$ are unions of components of $\closure{F\setminus F_g}$.
In particular, they must both be unions of normal discs.
The same argument applies to $F_g'$, $F_v'$, and $F_1$.
\item This follows immediately from the fact that $F_v$ and $F_0$ are unions of components of $\closure{F\setminus F_g}$, and similarly $F_1$ is a union of components of $\closure{F\setminus F_g'}$.
\item This is a consequence of the fact that $F=F_g\cup F_0\cup F_v$ and $F=F_g'\cup F_1\cup F_v'$.
\item We only prove the statement for $\boundary_0 M'$.
Let $Z$ be a component of $\posnbhd{F_0}\cup\negnbhd{F_1}$.
Each component $C$ of $Z\cap\boundary_0 M'$ is a component of $\pospushoff{F_0}$ or of $\pospushoff{F_v}$.
Suppose, for a contradiction, that $C$ is a component of $\pospushoff{F_v}$.
\zcref[S]{itm:fibredness certificate:N+(Fv) vertical} implies that $C=Z\cap\boundary_0 M'$.
But $C$ must be disjoint from $\pospushoff{F_0}$ by \zcref{itm:fibredness certificate:F0 Fv disjoint}, and $Z\cap\boundary_1 M'\subs\negpushoff{(F_v')}$ must be disjoint from $\negpushoff{F_1}$ by \zcref{itm:fibredness certificate:Fg' F1 disjoint}.
However, this contradicts the fact that $Z$ is a component of $\posnbhd{F_0}\cup\negnbhd{F_1}$.
\item Two parallelity components intersect if and only if their horizontal boundaries intersect.
However, \zcref{itm:fibredness certificate:N+(Fv) vertical} implies that $\posnbhd{F_v}\cap\hboundary M'=\pospushoff{F_v}\cup\negpushoff{(F_v')}$.
We readily see that the horizontal boundaries of $\posnbhd{F_v}$ and $\posnbhd{F_0}\cup\negnbhd{F_1}$ are disjoint.
\item If $x$ is a point in $X\setminus X'$, then it admits a neighbourhood $U$ in $X$ such that $U\setminus\boundary X$ is connected.
Since, by \zcref{itm:fibredness certificate:fg homeomorphism}, the restriction of $f_g$ to $\interior{N'}$ is a homeomorphism onto $\interior{X}$, we see that $f_g^{-1}(U\setminus\boundary X)$ is connected.
Therefore, there can be at most one point in $N'$ mapping to $x$ under $f_g$.
\item \zcref[S]{itm:fibredness certificate:F0 in D0,itm:fibredness certificate:D0 union of discs} immediately imply that $\thick{F_0}$ is a planar surface.
To see that $\thick{F_1}$ is planar, note first that \zcref{itm:fibredness certificate:fg homeomorphism,itm:fibredness certificate:fg extends to horizontal boundary} combine to give a homeomorphism $\umap{\boundary_1 N}{\negpushoff{(F_g'\cup F_1)}}$ sending $\boundary_1 N\cap N''$ to $\negpushoff{F_1}$.
From \zcref{itm:fibredness certificate:F1 in D1,itm:fibredness certificate:D1 union of discs}, we conclude that $\thick{F_1}$ is contained in a disjoint union of discs, and hence it is planar.
\item The sub\=/$2$\=/complex $F_g$ of $F$ is simplicially isomorphic to $\boundary_0 X$, hence by \zcref{thm:triangulation of the guts is small} we have that $\area{F_g}\le 32t$.
The argument for $F_g'$ is completely analogous.\qedhere
\end{enumarabic}
\end{proof}

\begin{lemma}[Extending homeomorphisms of the boundary of interval bundles]
\label{thm:extending homeomorphisms of the boundary of interval bundles}
Let $M$ and $N$ be pre\=/sutured manifolds that are both homeomorphic to $F\times[0,1]$ for some compact orientable surface $F$, and let $A$ be a closed vertical subset of $\vboundary M$.
Let $\map{f_0}{\hboundary M\cup A}{\boundary N}$ be an embedding such that $f_0(\boundary_i M)=\boundary_i N$ for $i\in\{0,1\}$.
Then there is a homeomorphism $\map{f}{M}{N}$ of pre\=/sutured manifolds that coincides with $f_0$ on $A$.
\end{lemma}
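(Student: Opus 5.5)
The plan is to build $f$ in three stages: first on $\hboundary M\cup A$, then on all of $\vboundary M$, and finally on the interior of $M$. Each stage uses that both manifolds are products.

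\textbf{Stage 1: the boundary.} Identify $M=F\times[0,1]$. For each component of $F$ and each $i\in\{0,1\}$, the map $f_0$ restricts to an embedding of $F\times\{i\}$ into $\boundary_i N\homeo F\times\{i\}$. Since these are compact surfaces of the same topological type, each such embedding of a component is a homeomorphism onto a component. This follows from invariance of domain together with compactness, after checking that boundary maps to boundary.

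Boundary goes to boundary because $A$ is vertical. Each point of $\boundary F\times\{i\}$ lies on a vertical arc. If that arc is in $A$, its image is an arc of $\boundary N$ leaving $\boundary_i N$. If it is not in $A$, we still need $f_0(\boundary\hboundary M)\subs\boundary\hboundary N$. I would take this as part of the meaning of a pre\=/sutured embedding; otherwise it follows because $f_0$ maps into $\boundary N$ and $\hboundary M$ is a surface with boundary.

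The same argument shows that $f_0$ maps $\hboundary M$ homeomorphically onto $\hboundary N$. Consequently $f_0$ induces a homeomorphism $\boundary\hboundary M\to\boundary\hboundary N$, which sends $\boundary\vboundary M$ to $\boundary\vboundary N$.

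\textbf{Stage 2: the vertical boundary.} Work one annulus $V=b\times[0,1]$ of $\vboundary M$ at a time, with $W$ the corresponding annulus of $N$. There are three cases.
\begin{enumarabic}
\item If $A\cap V=V$, then $f_0(V)$ is an annulus in $\boundary N$ with boundary on $\boundary\hboundary N$, so it equals $W$.
\item If $A\cap V=\emptyset$, extend the boundary map $\boundary V\to\boundary W$ to a homeomorphism $V\to W$. This is possible because orientations are compatible, which is forced by orientation of the surfaces $\hboundary$ and the product structure.
\item Otherwise $A\cap V$ is a finite union of vertical discs (squares), being closed and vertical; I would argue finiteness from the PL setting. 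Each complementary region of $V$ is a square whose boundary map is already defined. That boundary map goes into a complementary square of $W\setminus f_0(A\cap V)$, by planarity and order preservation along $b$. Extend over each square by the Alexander trick.
\end{enumarabic}
This gives a homeomorphism $\boundary M\to\boundary N$ that agrees with $f_0$ on $A$ and preserves $\boundary_i$.

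\textbf{Stage 3: the interior.} Extend the boundary homeomorphism $h\colon\boundary M\to\boundary N$ to a homeomorphism $M\to N$. Identify $N$ with $F\times[0,1]$ as well. Then $h|_{F\times\{0\}}$ and $h|_{F\times\{1\}}$ give homeomorphisms $h_0,h_1$ of $F$ (up to the identification), and $h|_{\vboundary}$ realises an isotopy on $\boundary F$ between $h_0|_{\boundary F}$ and $h_1|_{\boundary F}$.

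It suffices to show that $h_0^{-1}h_1$ is isotopic to the identity rel $\boundary$, after adjusting by the vertical data. If this holds, the isotopy gives the extension $F\times[0,1]\to F\times[0,1]$. It also implies that $h$ extends as the product of $h_0$ with the isotopy, corrected on a collar of $\vboundary$.

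\textbf{Main obstacle.} Stage 3 is the key difficulty. In general $h_0$ and $h_1$ need not be isotopic, for example when $f_0$ on $\boundary_1$ is twisted by a mapping class relative to $\boundary_0$. In that case no extension exists, and the lemma as stated would fail unless extra freedom is available.

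The saving grace is that the homeomorphism only needs to coincide with $f_0$ on $A$, not on $\hboundary M$. So the plan is to discard $f_0$ on $\hboundary M$ and build $f=\phi\times\mathrm{id}$ for a homeomorphism $\phi\colon F\to F$. The requirement is that $\phi\times\mathrm{id}$ agrees with $f_0$ on $A$, after first isotoping the product structure of $N$ near $\vboundary N$ so that $f_0|_A$ becomes level-preserving; this is possible because $f_0(A)$ is vertical.

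Concretely, take $\phi=f_0|_{F\times\{0\}}$. Then modify $f=\phi\times\mathrm{id}$ in a collar $\vboundary M\times[0,\epsilon]$, using a fibrewise isotopy of vertical arcs, so that it matches $f_0$ on $A$. The expected difficulty lies in making this collar correction precise when $A$ contains whole annuli: there the twisting of $f_0|_A$ must be unwound in a collar of $\boundary F$ by a Dehn-twist-type isotopy, which is legitimate because it is supported away from $A$'s other points.
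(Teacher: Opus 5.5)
Your final plan is correct and is essentially the paper's proof: you take $\phi=f_0|_{\boundary_0 M}$, ignore $f_0$ on $\boundary_1 M$, use $\phi\times\mathrm{id}$, and correct it in a collar of the vertical annuli meeting $A$, pushing any relative rotation or twist of $f_0|_A$ into a collar of $\boundary\boundary_1 M$; the paper does the same correction by extending $f_0$ over the union $A'$ of annuli meeting $A$ and flattening $(M,\boundary_0 M\cup A')$ to $(F\times[0,1],F\times\{0\})$, where any homeomorphism of the bottom extends as a product. Stages 1--3 should be deleted, not kept as a preliminary: besides the failure you found, Stage 2(2) is false in general, because $f_0|_{\boundary_0 M}$ and $f_0|_{\boundary_1 M}$ may match the components of $\boundary F$ differently, so when $A\cap V=\emptyset$ the set $f_0(\boundary V)$ need not bound a single annulus of $\vboundary N$.
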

\begin{proof}
Firstly, note that $B=f_0(A)$ is a vertical subset of $\vboundary N$.
Let $A'$ be the union of all the components of $\vboundary M$ that intersect $A$.
Consider the restriction of $f_0$ to $A\cup(\hboundary M\cap A')$; this is a homeomorphism to $B\cup(\hboundary N\cap B')$ for some union $B'$ of components of $\vboundary N$, and it can be extended to a homeomorphism $\map{f'}{A'}{B'}$ (see \zcref{fig:extending homeomorphisms of the boundary of interval bundles:extending to A'}).
Let $A''=\boundary_0 M\cup A'$ and $B''=\boundary_0 N\cup B'$.
Since $\boundary_0 M$ and $\boundary_0 N$ are homeomorphic, we can extend $f'$ to a homeomorphism $\map{f''}{A''}{B''}$.
Finally, note that $(M,A'')$ and $(N,B'')$, as pairs of topological spaces, are both homeomorphic to $(F\times[0,1],F\times\{0\})$; this is shown in \zcref{fig:extending homeomorphisms of the boundary of interval bundles:flattening}.
Every homeomorphism $\umap{F\times\{0\}}{F\times\{0\}}$ extends to a homeomorphism $\umap{F\times[0,1]}{F\times[0,1]}$; hence, the homeomorphism $\map{f''}{A''}{B''}$ extends to a homeomorphism $\map{f}{M}{N}$.
This can be chosen so that $f(\boundary_i M)=\boundary_i N$ for $i\in\{0,1\}$, and by construction we have that $f$ agrees with $f_0$ on $A$, as required.
\end{proof}

\begin{myfigure}
\begin{subfigure}{0.48\textwidth}
\centering
\myfiguresource[scale=.25]{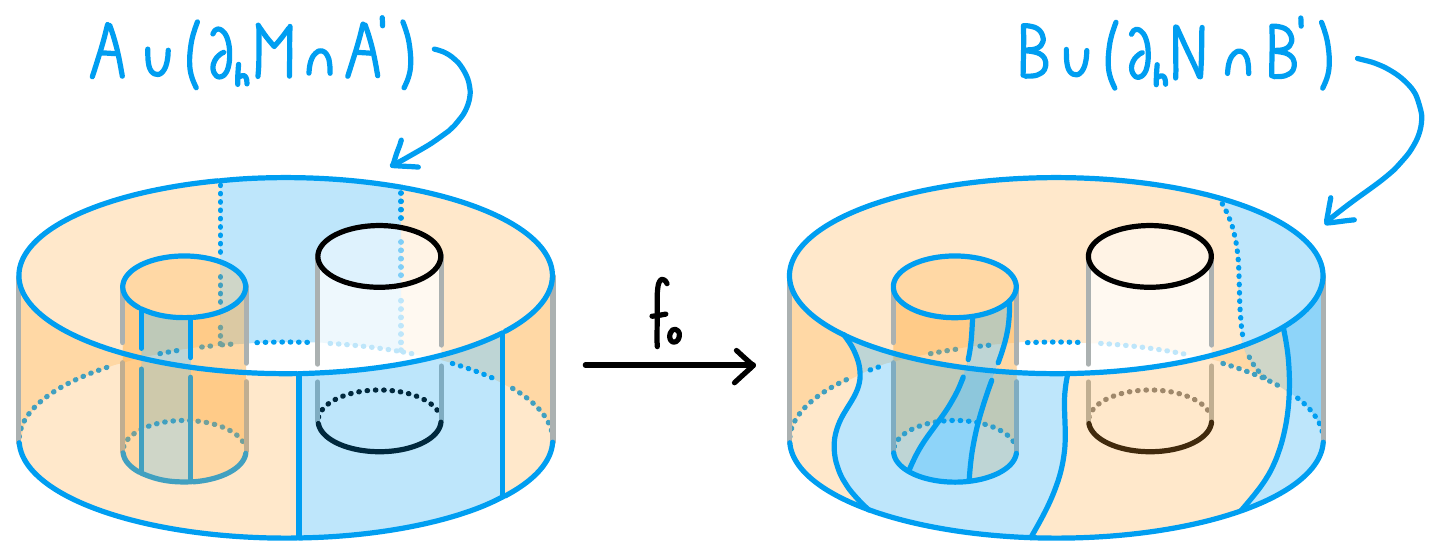}
\caption{}
\label{fig:extending homeomorphisms of the boundary of interval bundles:extending to A'}
\end{subfigure}\hfill
\begin{subfigure}{0.48\textwidth}
\centering
\myfiguresource[scale=.25]{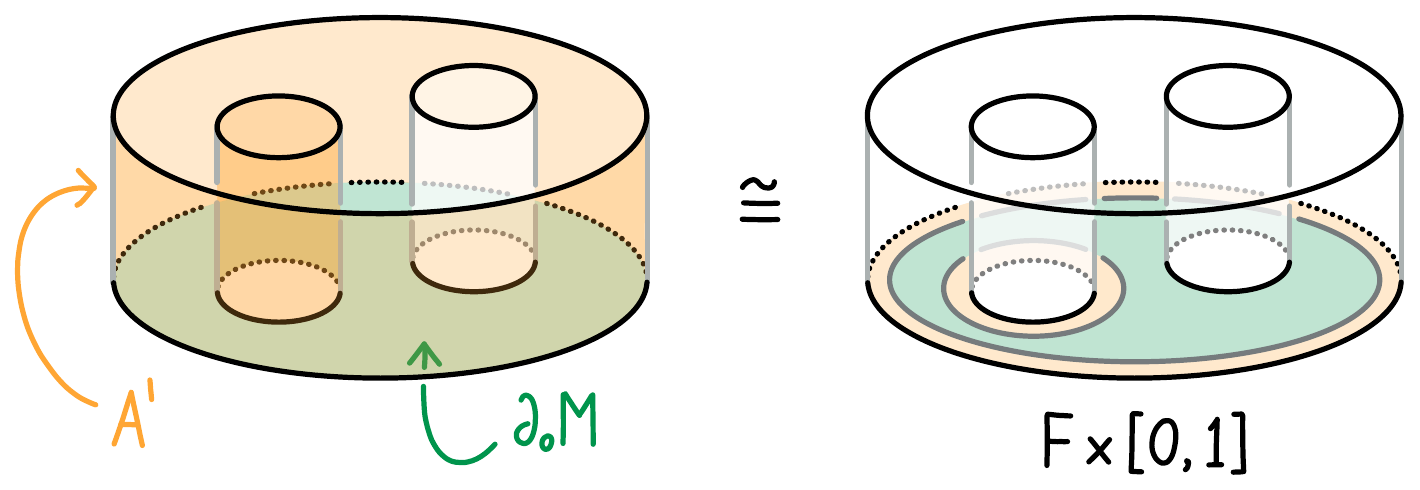}
\caption{}
\label{fig:extending homeomorphisms of the boundary of interval bundles:flattening}
\end{subfigure}
\caption{\subref{fig:extending homeomorphisms of the boundary of interval bundles:extending to A'} The homeomorphism $f_0$ restricts to a homeomorphism from $A\cup(\hboundary M\cap A')$ to $B\cup(\hboundary N\cap B')$, and this restriction can be extended to a homeomorphism $\umap{A'}{B'}$. The union of the shaded regions is, respectively, $A'$ on the left and $B'$ on the right.
\subref{fig:extending homeomorphisms of the boundary of interval bundles:flattening} The pair $(M,A'')$ is homeomorphic to $(F\times[0,1],F\times\{0\})$; this can be seen by ``flattening'' $A'$ on the horizontal boundary of $F\times[0,1]$.}
\end{myfigure}

\begin{proposition}[Correctness of the fibredness certificate]
\label{thm:correctness of fibredness certificate}
Suppose that $M$ is a triangulated compact connected oriented $3$\=/manifold, and $F$ is a transversely oriented connected normal surface in $M$.
Let $\Sigma\in\cert*[fib](M,F)$ be a valid certificate.
Then the following hold.
\begin{enumarabic}
\zcsetup{reftype=property}
\item Let $M_N=X\cup\posnbhd{F_0}\cup\negnbhd{F_1}$, and endow $M_N$ with the structure of a pre\=/sutured subcomplex given by
\begin{gather*}
\boundary_i M_N=\boundary_i M'\cap M_N\qquad\text{for $i\in\{0,1\}$,}\\
\vboundary M_N=(\vboundary M'\cup\posnbhd{F_v})\cap M_N.
\end{gather*}
Then there exists a map $\map{f}{N}{M_N}$ such that $f(\boundary_i N)=\boundary_i M_N$ for $i\in\{0,1\}$; moreover, the map $f$ restricts to $f_g$ on $N'$, and to a homeomorphism $\umap{N''}{\posnbhd{F_0}\cup\negnbhd{F_1}}$.
\item The pre\=/sutured manifold $\abstr{M_N}$ is a product interval bundle over a compact orientable surface; moreover, it is homeomorphic to $N$ as a pre\=/sutured manifold.
\item\label{itm:correctness of fibredness certificate:M' interval bundle} The pre\=/sutured manifold $M'$ is a product interval bundle over $F$, and the interval bundle structure agrees with that of $M_N$, as well as with the intrinsic one on $\posnbhd{F_v}$.
\item\label{itm:correctness of fibredness certificate:vertical} The properly embedded surface $\thin{X}\cap\thick{\posnbhd{F_v}}$ is vertical in $M'$.
\end{enumarabic}
\end{proposition}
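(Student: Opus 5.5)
We prove the four claims in order, each building on the previous one, using the properties of \zcref{def:certificate} together with \zcref{thm:basic properties of fibredness certificate}.

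\textbf{Claim 1.} We construct $f$ piecewise. On $N'$ we set $f=f_g$. On $N''$ we observe that $\posnbhd{F_0}\cup\negnbhd{F_1}$ is a union of parallelity components (\zcref{itm:basic properties of fibredness certificate:parallelity components}). Its horizontal boundary is $\pospushoff{F_0}\cup\negpushoff{F_1}$, and since $\thick{F_0}$ and $\thick{F_1}$ are planar (\zcref{itm:basic properties of fibredness certificate:Fi planar}), each component is a product interval bundle over a planar surface. By \zcref{itm:fibredness certificate:N'' is I-bundle,itm:fibredness certificate:R'' suitable triangulation}, $\abstr{N''}$ is a product interval bundle as well. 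Property \zcref{itm:fibredness certificate:fg extends to horizontal boundary} gives a homeomorphism $\hboundary N''\to\pospushoff{F_0}\cup\negpushoff{F_1}$. Property \zcref{itm:fibredness certificate:fg of vertical boundary} identifies $N'\cap N''$, a vertical subset of $\vboundary N''$, with $(\posnbhd{F_0}\cup\negnbhd{F_1})\cap X$, via $f_g$. We then apply \zcref{thm:extending homeomorphisms of the boundary of interval bundles} componentwise, with $A=N'\cap N''$ and $f_0$ assembled from these two maps, to get a homeomorphism $N''\to\posnbhd{F_0}\cup\negnbhd{F_1}$ agreeing with $f_g$ on the overlap. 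The pieces glue because they coincide on $N'\cap N''$. The condition $f(\boundary_i N)=\boundary_i M_N$ follows from \zcref{itm:fibredness certificate:fg of horizontal boundary} together with $\boundary_i N''=G_i''$.

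\textbf{Main obstacle.} The hardest part is matching the $\boundary_0/\boundary_1$ labels. The intrinsic structure on a parallelity component may send $G_0''$ to $\negpushoff{F_1}$ rather than to $\pospushoff{F_0}$. We handle this using the discs $D_0$ and $D_1$. By \zcref{itm:fibredness certificate:D0 union of discs,itm:fibredness certificate:F0 in D0,itm:fibredness certificate:D0 disjoint from Fv,itm:fibredness certificate:D1 union of discs,itm:fibredness certificate:F1 in D1,itm:fibredness certificate:D1 disjoint from boundary}, each such component lies over discs meeting $\boundary F$ in at most one arc. We must also check that the glued orientations are consistent.

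\textbf{Claim 2.} The map $f$ is a homeomorphism away from the singular locus, by the corresponding item of \zcref{thm:basic properties of fibredness certificate}. It therefore lifts to a homeomorphism $N\to\abstr{M_N}$ of pre-sutured manifolds. By \zcref{itm:fibredness certificate:N is I-bundle,itm:fibredness certificate:R suitable triangulation,itm:fibredness certificate:Sigma' valid}, $N$ is a product interval bundle, so $\abstr{M_N}$ is one too.

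\textbf{Claim 3.} By \zcref{itm:fibredness certificate:N+(Fv) vertical}, the region $\posnbhd{F_v}$ is a union of parallelity components whose intrinsic interval bundle structure runs from $\boundary_0M'$ to $\boundary_1M'$. So it is a product interval bundle compatible with the pre-sutured structure of $M'$. We glue it to $\abstr{M_N}$ along their common vertical boundary, namely the vertical subset $\posnbhd{F_v}\cap M_N$ of both. We take the interval structure on $\abstr{M_N}$ given by \zcref{thm:extending homeomorphisms of the boundary of interval bundles}, chosen to agree with the fibres of $\posnbhd{F_v}$ there. Gluing two product bundles along vertical annuli and squares, with matched fibres, gives a product interval bundle over $F$. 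Its horizontal boundary is $\hboundary M'$ by the third and fourth items of \zcref{thm:basic properties of fibredness certificate}.

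\textbf{Claim 4.} The surface $\thin{X}\cap\thick{\posnbhd{F_v}}$ is a union of fibres of the global bundle structure built in Claim 3. Hence it is vertical in $M'$.
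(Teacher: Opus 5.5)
Your overall architecture matches the paper's. You build $f$ from $f_g$ plus an extension via \zcref{thm:extending homeomorphisms of the boundary of interval bundles}, lift it to a pre\=/sutured homeomorphism $N\to\abstr{M_N}$, and glue with $\posnbhd{F_v}$ along a vertical subset. However, Claim~1 has a genuine gap exactly where the extension lemma is invoked.

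\textbf{Productness of $M''$.} The lemma needs $M''=\posnbhd{F_0}\cup\negnbhd{F_1}$ to be a \emph{product} interval bundle, and you deduce this from planarity of $\thick{F_0}$ and $\thick{F_1}$. That inference fails. Parallelity components can a priori be twisted, and a twisted interval bundle over a M\"obius band (or over a projective plane minus discs) has planar horizontal boundary: an annulus, respectively a planar surface. Such a horizontal boundary also fits inside a disc of $D_0$, so the conditions in \zcref{itm:fibredness certificate:D} do not exclude it either.

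The missing idea is to transport productness from $\abstr{N''}$ through the gluing data. Let $B=N'\cap N''$ and $A=M''\cap X$. Let $f_1$ be the map on $\hboundary N''\cup B$ given by the extension $f_0$ of \zcref{itm:fibredness certificate:fg extends to horizontal boundary} on $\hboundary N''$, and by $f_g$ on $B$. These agree on $B\cap\hboundary N''$, and this agreement is also what shows that $f_g|_B\colon B\to A$ is a homeomorphism, which you assumed without argument.

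Since $\abstr{N''}$ is a product by \zcref{itm:fibredness certificate:N'' is I-bundle}, each component of $B$ meets two \emph{distinct} components of $\hboundary N''$. Hence each component of $A$ meets two distinct components of $\hboundary M''$. Every component of $M''$ meets $X$, because $\boundary_F F_0\subs\boundary F_g$ and $\boundary_F F_1\subs\boundary F_g'$. So every component of $M''$ has two horizontal boundary components, is therefore a product, and $A$ is vertical in it.

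\textbf{Labelling.} Once this is in place, the $\boundary_0/\boundary_1$ issue you single out as the main obstacle disappears. Your appeal to $D_0$ and $D_1$ does not resolve it; the paper's proof of this proposition never uses \zcref{itm:fibredness certificate:D}. Simply give each component of $M''$ the product structure whose $\boundary_i$ is the image under $f_0$ of the corresponding part of $G_i''$, and the extension lemma applies as stated.

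The condition $f(\boundary_i N)=\boundary_i M_N$ then follows from \zcref{itm:fibredness certificate:fg of horizontal boundary}. Each component $K$ of $\hboundary N''$ lies in a single $\boundary_j N$ and meets $N'$. There $f_0=f_g$ maps into $\boundary_j M'$, so $f_0(K)\subs\boundary_j M'$ by connectedness, whether $K$ lies in $G_0''$ or $G_1''$.

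With this repair, your Claims~2--4 go through essentially as in the paper. Your explicit choice of product structure on $\abstr{M_N}$ whose fibres match those of $\posnbhd{F_v}$ along the gluing region is a reasonable detail that the paper leaves implicit.
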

\begin{proof}
Let $M''=\posnbhd{F_0}\cup\negnbhd{F_1}$, so that $M_N=X\cup M''$.
Endow $M''$ with its intrinsic interval bundle structure.
Let $A=M''\cap X$ and $B=N''\cap N'$; by \zcref{itm:fibredness certificate:fg of vertical boundary}, the image of $B$ under $f_g$ is precisely $A$.
By \zcref{itm:fibredness certificate:fg of horizontal boundary}, there is a homeomorphism $\map{f_0}{\hboundary N''}{\hboundary M''}$ whose restriction to $B\cap\hboundary N$ is a simplicial isomorphism $\umap{B\cap\hboundary N}{A\cap\hboundary M'}$ that agrees with $f_g$.
This implies that the restriction of $f_g$ to $B$ is actually a homeomorphism $\umap{B}{A}$.
Denote by $f_1$ the homeomorphism $\umap{\hboundary N''\cup B}{\hboundary M''\cup A}$ that agrees with $f_0$ on $\hboundary N''$ and with $f_g$ on $B$.
Note that each component of $B$ intersects two distinct components of $\hboundary N''$, as $\abstr{N''}$ is a product interval bundle by \zcref{itm:fibredness certificate:N'' is I-bundle}.
Therefore, we see that each component of $A$ intersects two distinct components of $\hboundary M''$.
It follows that $\abstr{M''}$ is also a product interval bundle, and that $A$ is a vertical subset of $\vboundary M''$.
We can then apply \zcref{thm:extending homeomorphisms of the boundary of interval bundles} to extend $f_1$ to a homeomorphism $\map{f_2}{N''}{M''}$ of pre\=/sutured subcomplexes that agrees with $f_1$ on $B$.
For a completely rigorous justification of this step, one should in fact apply \zcref{thm:extending homeomorphisms of the boundary of interval bundles} to $\abstr{N''}$ and $\abstr{M''}$ instead of $N''$ and $M''$, and exploit the fact that the singular locus of $N''$ is contained in $B$ to obtain the desired homeomorphism $f_2$.

Let $\map{f}{N}{M_N}$ be the map that coincides with $f_g$ on $N'$ and with $f_2$ on $N''$.
By construction of $f_2$ and \zcref{itm:fibredness certificate:fg of horizontal boundary}, we have that $f(\boundary_i N)=\boundary_i M_N$ for $i\in\{0,1\}$.
Moreover, since $f_2$ is a homeomorphism and $f_g$ restricts to a homeomorphism on the interior of $M'$, we see that $f$ restricts to a homeomorphism $\umap{\interior{N}}{\interior{M_N}}$.
It follows that $f$ lifts to a homeomorphism of pre\=/sutured manifolds $\umap{N}{\abstr{M_N}}$, thus proving that $\abstr{M_N}$ is a product interval bundle over a compact orientable surface, namely $\abstr{F_g\cup F_0}$.

Consider now the intrinsic interval bundle structure on $\posnbhd{F_v}$.
The intersection $\posnbhd{F_v}\cap M_N$ is a vertical subset of $\vboundary\posnbhd{F_v}$, since it is the intersection of the guts of $M'$ with the union of product parallelity components $\posnbhd{F_v}$.
It is not hard to see that $\posnbhd{F_v}\cap M_N$ is also a vertical subset $\vboundary M_N$; in fact, by \zcref{itm:fibredness certificate:N+(Fv) vertical}, this intersection is a union of annuli and squares in $\vboundary M_N$, each of which intersects both $\boundary_0 M_N$ and $\boundary_1 M_N$.
We deduce that the interval bundle structures on $\posnbhd{F_v}$ and $M_N$ glue together to give a product interval bundle structure on the pre\=/sutured manifold $M'$.
Finally, since $\thin{X}\cap\thick{\posnbhd{F_v}}$ is a union of boundary components of a regular neighbourhood of $\posnbhd{F_v}\cap M_N$, it is a vertical surface in $M'$.
\end{proof}

We remark that proving \zcref{itm:correctness of fibredness certificate:vertical} above is not strictly necessary for the purposes of \zcref{thm:fibredness detection is in NP}, as \zcref{itm:correctness of fibredness certificate:M' interval bundle} would suffice.
We include it because it completes the picture of the certificate given in \zcref{sec:overview of the certificate}, and because we will need it in our upcoming article \cite{baroni:certifying-hyperbolicity-fibred}.

\subsection{Verification of the certificate}

\begin{proposition}[Verification of the fibredness certificate]
\label{thm:verification of fibredness certificate}
There is an algorithm that takes as input
\begin{itemize}
\item a triangulation of a compact connected oriented $3$\=/manifold $M$,
\item a transversely oriented connected normal surface $F$ in $M$, and
\item a certificate $\Sigma\in\cert[fib](M,F)$,
\end{itemize}
and decides whether $\Sigma\in\cert*[fib](M,F)$.
The running time of the algorithm is polynomial in $\card{\Sigma}$.
\end{proposition}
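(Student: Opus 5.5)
The verifier checks the conditions of \zcref{def:certificate} one group at a time. Each check uses either the orbit-counting machinery of \zcref{sec:applications of agol-hass-thurston} on $F$, or direct combinatorial inspection of the small objects $\RRR$, $N'$, $N''$, $G_i''$, $D_1$, $f_g$. Those small objects have size polynomial in $\card{\Sigma}$ once the quantitative bounds are known.

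\textbf{Step 1: cheap checks first.} The verifier first checks the \zcref{itm:fibredness certificate:quantitative} conditions. The numbers $\length{\boundary_F F_0}$, $\length{\boundary_F F_1}$, $\length{\boundary_F D_0}$ and $\card{\RRR}$ are read off from the input, and $\length{\boundary D_1}$ is read off from $\RRR$. This makes everything that follows polynomial in $t$ and $\log w(F)$. It then builds the triangulation of the guts $X$ via \zcref{sec:triangulating the guts}, of size at most $50t$. From it, $F_g$ and $F_g'$ are obtained as sub-$2$-complexes of $F$ with $\area{F_g}, \area{F_g'}\le 32t$. These are explicitly listed, since each triangle of $\boundary_i X$ corresponds to an explicit triangle of $F$ next to a gut piece.

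Next it checks the \zcref{itm:fibredness certificate:N} conditions. Conditions \ref{itm:fibredness certificate:R suitable triangulation} and \ref{itm:fibredness certificate:R'' suitable triangulation} are checked by running the verifier of \zcref{thm:interval bundle recognition is in NP} on $\Sigma'$ and $\Sigma''$; these certificates have polynomial size by \zcref{itm:fibredness certificate:aux}. Conditions \ref{itm:fibredness certificate:N is I-bundle} and \ref{itm:fibredness certificate:N'' is I-bundle} follow from this. The remaining conditions (flaplessness, the decomposition $N=N'\cup N''$, and the properties of $G_0''$ and $G_1''$) are finite inspections of $\RRR$.

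\textbf{Step 2: conditions on $F$.} The \zcref{itm:fibredness certificate:Fs} and \zcref{itm:fibredness certificate:D} conditions are checked with \zcref{thm:finding the components of a sub-2-complex of a normal surface,thm:deciding containment of sub-2-complexes of normal surfaces}.
\begin{itemize}
\item First, validate the encodings of $F_0$, $F_1$, $D_0$.
\item Compute $F_v$ and $F_v'$ as closures of complements. Their boundaries lie in $\boundary_F F_g\cup\boundary_F F_0$, so they are encodable in polynomial size.
\item Disjointness and containment questions reduce to \zcref{thm:deciding containment of sub-2-complexes of normal surfaces}.
\item The disc condition \ref{itm:fibredness certificate:D0 union of discs} is read off from the Euler characteristics and $F$-boundary sequences given by \zcref{thm:finding the components of a sub-2-complex of a normal surface}: each component has $\chi=1$ and at most one $\boundary F$ symbol in its boundary sequence. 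Condition \ref{itm:fibredness certificate:D1 union of discs} is checked directly in $\RRR$.
\end{itemize}

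\textbf{Step 3: the conditions on $f_g$ and condition \ref{itm:fibredness certificate:N+(Fv) vertical}.} The \zcref{itm:fibredness certificate:f} conditions concern a simplicial map between complexes of linear size. Injectivity on the interior, surjectivity onto $X$, orientation and the images of the boundary pieces are all finite checks. The exception is the target sets $\pospushoff{F_0}$, $\negpushoff{F_1}$ and $\posnbhd{F_0}\cap X$, for which the verifier needs to know which triangles of $\vboundary X$ lie in $\posnbhd{F_0}\cup\negnbhd{F_1}$. Each such triangle lies in a parallelity piece adjacent to a gut piece. Whether it belongs to $\posnbhd{F_0}$ amounts to asking whether an explicit triangle of $F$ lies in $F_0$, which is answered by \zcref{thm:deciding containment of sub-2-complexes of normal surfaces}.

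For the homeomorphism extension in \ref{itm:fibredness certificate:fg extends to horizontal boundary}, the verifier compares, component by component, the Euler characteristics, boundary counts and $F$-boundary sequences of $\pospushoff{F_0}\cup\negpushoff{F_1}$ with the corresponding data for $\hboundary N''$. The latter is computed in $\RRR$. A homeomorphism between compact oriented surfaces extending a given boundary identification exists if and only if these invariants match, with the boundary sequences agreeing under $f_g$.

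\textbf{Main obstacle.} I expect condition \ref{itm:fibredness certificate:N+(Fv) vertical}, $\posnbhd{F_v}=\negnbhd{F_v'}$, and the similar condition \ref{itm:fibredness certificate:D1 disjoint from boundary}, to be the main obstacle. These involve the parallelity bundle, which may be exponentially large. My plan is to use a weighted Agol--Hass--Thurston run as follows.
\begin{itemize}
\item Put the triangles of $\hboundary M'$ (that is, two copies of the triangles of $F$) on an interval.
\item Pair them through the transfer map inside parallelity pieces and along shared edges not lying on $\vboundary X$. The pairings are $\bigO(t)$ block maps.
\item Weight each triangle with indicator coordinates recording membership in $\pospushoff{F_v}$ versus $\negpushoff{(F_v')}$ versus others. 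Membership is decided blockwise: the complexes are encoded by polynomially many boundary edges, so each type block splits into polynomially many subintervals of constant membership.
\end{itemize}
The condition then holds if and only if every orbit touching $\pospushoff{F_v}$ touches only $\pospushoff{F_v}\cup\negpushoff{(F_v')}$, and vice versa. Condition \ref{itm:fibredness certificate:D1 disjoint from boundary} is handled the same way, since $f_g(D_1\cap N')$ is a set of polynomially many triangles. The delicate part is arguing that the membership weights really are piecewise constant on polynomially many intervals. For that I would reuse the fourth block of coordinates from the proof of \zcref{thm:finding the components of a sub-2-complex of a normal surface}.
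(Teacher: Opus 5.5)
Most of your plan matches the paper: you build the guts triangulation and $F_g,F_g'$, check $\Sigma'$ and $\Sigma''$, decide which vertical faces of $X$ lie in $\posnbhd{F_0}\cup\negnbhd{F_1}$ by containment queries, and compare boundary sequences to extend $f_g$ over $\hboundary N''$. The gap is in your treatment of $\posnbhd{F_v}=\negnbhd{F_v'}$, on two counts.

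\textbf{Your test is too weak.} Consider a parallelity component whose entire horizontal boundary lies in $\pospushoff{F_v}$, such as a twisted component or a product joining $\boundary_0M'$ to itself. Its orbit touches only $\pospushoff{F_v}$, so it passes both halves of your test. Yet its pieces lie in $\posnbhd{F_v}$ and not in $\negnbhd{F_v'}$, and you give no argument that the other conditions exclude this. What must be checked is that every piece meeting $\pospushoff{F_v}$ has its opposite face in $\negpushoff{(F_v')}$, which is an orientation condition.

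\textbf{The piecewise-constancy claim fails.} You claim that membership in $\pospushoff{F_v}$ and $\negpushoff{(F_v')}$ is constant on polynomially many subintervals of each type block. This is unjustified and false in general. Consecutive triangles of a type block lie on different normal discs, related by the transfer map, and these can be far apart in $F$, so a short $\boundary_F F_v$ constrains nothing. Even whether a push-off lies on $\boundary_0M'$ or on $\boundary_1M'$ changes along a stack at every parallelity piece joining one side of $M'$ to itself. A general normal surface can have exponentially many such pieces. The fourth block of coordinates in \zcref{thm:finding the components of a sub-2-complex of a normal surface} does not rescue this: it only records positions relative to the few triangles adjacent to $\EEE$. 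Membership in a sub-$2$-complex is an \emph{output} of an orbit computation for a different system of pairings, and cannot be fed in as a weight.

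\textbf{The fix needs no new orbit computation.}
\begin{itemize}
\item After checking that $F_0$ is a union of components of $\closure{F\setminus F_g}$, $F_v$ is the union of the remaining components. There are polynomially many: each one meets $\boundary_F F_g$, which has at most $32t$ edges, unless it is all of $F$.
\item On each such component $C$, the transfer map is simplicial and continuous. The parallelity pieces along $\pospushoff{C}$ are glued along vertical faces, so their opposite faces all lie in $\boundary_0M'$ or all in $\boundary_1M'$, and they lie inside the connected set $\transfermap(C)$.
\item It therefore suffices to pick one triangle $T$ per component of $F_v$. Check with \zcref{thm:transverse orientation of a normal surface} that the transverse orientations of $F$ at $T$ and $\transfermap(T)$ agree. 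Then check $\transfermap(T)\subseteq F_v'$ with \zcref{thm:deciding containment of sub-2-complexes of normal surfaces}, using that $F_v'$ is a union of components of $\closure{F\setminus F_g'}$.
\item This proves $\posnbhd{F_v}\subseteq\negnbhd{F_v'}$, and the reverse inclusion is symmetric. This is the paper's argument.
\end{itemize}

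Similarly, disjointness of $f_g(D_1\cap N')$ from $\posnbhd{F_v}$ is a direct inspection, since only the polynomially many pieces adjacent to the guts matter.

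One smaller omission: the disc condition concerns components of $\thick{D_0}$, not of $\abstr{D_0}$. You must first merge components of $\abstr{D_0}$ at the singular vertices, of which there are at most $\length{\boundary_F D_0}$. Only then can you read off Euler characteristics and intersections with $\boundary F$.
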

\begin{proof}
Many of the required properties can be easily verified, either by direct inspection, or by a straightforward application of the algorithm of \citeauthor{agol-hass-thurston:computational-complexity-knot} or one of its corollaries listed in \zcref{sec:applications of agol-hass-thurston}.
For example, one can check that $\RRR$ is a pre\=/sutured triangulation of $N$ by direct verification, and that $F_v$ is a sub\=/$2$\=/complex of $F$ using the algorithm of \zcref{thm:finding the components of a sub-2-complex of a normal surface}.
We will not discuss other instances of this kind of verification, and instead focus on the steps that require a non\=/trivial algorithmic approach.

\paragraph{\zcref[S]{itm:fibredness certificate:Fs}.}
The sub\=/$2$\=/complexes $F_g$ and $F_g'$ of $F$ can be constructed explicitly, using \zcref{thm:transverse orientation of a normal surface} to compute transverse orientations.
\zcref[S]{thm:deciding containment of sub-2-complexes of normal surfaces} immediately tells us whether \zcref{itm:fibredness certificate:Fg F0 disjoint} holds.
We can apply \zcref{thm:finding the components of a sub-2-complex of a normal surface} to find the components of $\closure{F\setminus F_g}$ and of $F_0$, and check that each component of the latter is also a component of the former.
This allows us to compute $F_v$, and verify that \zcref{itm:fibredness certificate:F0 Fv disjoint} holds.
We perform an analogous test for $F_1$ and $F_g'$, making sure that \zcref{itm:fibredness certificate:F1 Fv' disjoint,itm:fibredness certificate:Fg' F1 disjoint} are satisfied.
Finally, to verify \zcref{itm:fibredness certificate:N+(Fv) vertical}, we pick a triangle $T$ in each component of $F_v$, and we check that the transverse orientations of $F$ at $T$ and $\transfermap(T)$ agree using \zcref{thm:transverse orientation of a normal surface} (recall the definition of the transfer map $\transfermap$ from \zcref{sec:triangulating the guts}); we also verify that $\transfermap(T)\subs F_v'$ using \zcref{thm:deciding containment of sub-2-complexes of normal surfaces}.
This guarantees that $\posnbhd{F_v}\subs\negnbhd{F_v'}$; the verification of the other containment is perfectly symmetric.

\paragraph{\zcref[S]{itm:fibredness certificate:N}.}
\zcref[S]{itm:fibredness certificate:N is I-bundle,itm:fibredness certificate:R suitable triangulation} can be verified with the aid of the certificate $\Sigma'$.
Similarly, \zcref{itm:fibredness certificate:N'' is I-bundle,itm:fibredness certificate:R'' suitable triangulation} can be verified with the aid of the certificate $\Sigma''$.
The other properties can be checked by direct inspection in polynomial time in the size of the triangulation $\RRR$.

\paragraph{\zcref[S]{itm:fibredness certificate:f}.}
The only property whose verification is non\=/trivial is \zcref{itm:fibredness certificate:fg extends to horizontal boundary}.
We can directly check that $f_g$ restricts to a simplicial isomorphism
\[
\umap{N'\cap\hboundary N''}{\pospushoff{(\boundary_F F_0)}\cup\negpushoff{(\boundary_F F_1)}}.
\]
We are then in the following situation: we have a homeomorphism from a subset of the boundary of $\hboundary N''$ to a subset of the boundary of $\pospushoff{F_0}\cup\negpushoff{F_1}$, and we ask whether it can be extended to a homeomorphism $\umap{\hboundary N''}{\pospushoff{F_0}\cup\negpushoff{F_1}}$.
We can answer this question by analysing each component $C$ of $\hboundary N''$ separately.
If $C$ is disjoint from $N'$, then there is no such extension, because every component of $F_0$ and $F_1$ intersects $X$.
Otherwise, the subset $f_g(C\cap N')$ of $\pospushoff{(\boundary_FF_0)}\cup\negpushoff{(\boundary_F F_1)}$ contains at least one edge.
If more than one component of $\pospushoff{F_0}\cup\negpushoff{F_1}$ intersects $f_g(C\cap N')$ in at least one edge, then there is no extension.
Otherwise, suppose without loss of generality that there is a unique component $C'$ of $F_0$ such that $f_g(C\cap N')\subs\pospushoff{(C')}$.
If $f_g(C\cap N')\neq \pospushoff{(\boundary_F C')}$, then there is no extension.
Otherwise, let $\map{h}{C\cap N'}{\boundary_F C'}$ be the simplicial isomorphism defined by $\pospushoff{h(x)}=f_g(x)$ for each $x\in C\cap N'$.
We only need to check that $h$ extends to an orientation\=/preserving homeomorphism $\umap{C}{C'}$.
A first necessary condition is that $\abstr{C}$ and $\abstr{C'}$ are homeomorphic; this can be verified thanks to \zcref{thm:finding the components of a sub-2-complex of a normal surface}.
Secondly, we need the $\hboundary N''$\=/boundary sequences of $C$ and the $F$\=/boundary sequences of $C'$ to be compatible, in the following sense.
There must exist a bijection between components of $\boundary\abstr{C}$ and components of $\boundary\abstr{C'}$ such that, if a component $b$ of $\boundary\abstr{C}$ corresponds to a component $b'$ of $\boundary\abstr{C'}$ under this bijection, and a $\hboundary N''$\=/boundary sequence of $C$ is $e_1,\ldots,e_k$, then $e_1',\ldots,e_k'$ is an $F$\=/boundary sequence of $C'$, where
\[
\begin{dcases*}
\pospushoff{(e_i')}=f_g(e_i)&if $e_i$ is an edge of $C\cap N'$,\\
e_i'=\boundary F&if $e_i=\boundary\hboundary N''$
\end{dcases*}
\qquad\text{for each $i\in\{1,\ldots,k\}$.}
\]
Boundary sequences can be computed thanks to \zcref{thm:finding the components of a sub-2-complex of a normal surface}; the existence of a bijection with the desired property is easy to verify, since each edge of $C'$ appears in at most one $F$\=/boundary sequence.
If these conditions hold for each component $C$ of $\hboundary N''$, then \zcref{itm:fibredness certificate:fg extends to horizontal boundary} is satisfied, otherwise it is not.

\paragraph{\zcref[S]{itm:fibredness certificate:D}.}
\zcref[S]{itm:fibredness certificate:F0 in D0,itm:fibredness certificate:D0 disjoint from Fv} can be verified using \zcref{thm:finding the components of a sub-2-complex of a normal surface}, and \zcref{itm:fibredness certificate:D1 disjoint from boundary,itm:fibredness certificate:F1 in D1} can be inspected directly.
As far as \zcref{itm:fibredness certificate:D0 union of discs} is concerned, we can use \zcref{thm:finding the components of a sub-2-complex of a normal surface} again to retrieve information about the components of $D_0$.
We can then manually combine components of $D_0$ into components of $\thick{D_0}$, since the number of singular points of $D_0$ is bounded above by $\length{\boundary_F D_0}$.
In particular, we can obtain the list of components of $\thick{D_0}$, together with their Euler characteristics and $F$\=/boundary sequences.
This is enough to check whether \zcref{itm:fibredness certificate:D0 union of discs} is satisfied.

\paragraph{\zcref[S]{itm:fibredness certificate:quantitative}.}
These conditions are just inequalities, which can be verified directly.

\paragraph{\zcref[S]{itm:fibredness certificate:aux}.}
These properties can be verified thanks to the algorithm of \zcref{thm:interval bundle recognition is in NP}.
\end{proof}

\section{Fibres of bounded weight}

\subsection{Fibres of negative Euler characteristic}

\bgroup

\RenewDocumentCommand{\P}{m}{\mathcal{P}_{#1}}
\NewDocumentCommand{\Pbar}{m}{\bar{\mathcal{P}}_{#1}}
\NewDocumentCommand{\vbar}{m}{\bar{\v{#1}}}
\NewDocumentCommand{\cone}{m}{\operatorname{cone}(#1)}

So far, we have developed the tools to certify that a given normal surface in a triangulated $3$\=/manifold is the fibre of a fibration over the circle.
Crucially, and unavoidably, the size of such a certificate depends on the ``size'' -- that is, the logarithm of the weight -- of the normal surface.
Therefore, the only remaining issue we need to address is whether every triangulated fibred $3$\=/manifold admits a normal fibre whose size is polynomial in the size of the triangulation.
This question is answered in the affirmative by \zcref{thm:fibre of small weight} below, for fibred $3$\=/manifolds admitting a fibre of negative Euler characteristic (the case of non\=/negative Euler characteristic is addressed in \zcref{sec:small toroidal fibres}).
We remark that this argument is not unknown to experts: in fact, it appears almost verbatim in \cite{schleimer:almost-normal-heegaard}, and it is anyway a straightforward application of the results in \cite{tollefson-wang:taut-normal-surfaces}.
However, we include it here for the sake of completeness, especially since the statement of \cite[Theorem~6.3.3]{schleimer:almost-normal-heegaard} requires the $3$\=/manifold to be atoroidal, and does not provide a bound on the Euler characteristic.

\begin{proposition}[A fibre of small weight and negative Euler characteristic]
\label{thm:fibre of small weight}
Let $\TTT$ be a triangulation of a compact connected orientable $3$\=/manifold $M$ with $t$ tetrahedra.
Suppose that $M$ fibres over the circle with fibre $F_0$ of negative Euler characteristic.
Then there is a normal fibre $F$ of $M$ with $\chi(F)\ge\chi(F_0)$ and
\[
w(F)\le t^3\cdot 2^{7t+8}.
\]
\end{proposition}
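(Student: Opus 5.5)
The plan is to take a least-weight normal representative of the fibre, decompose it into fundamental surfaces, and use the Tollefson--Wang theory of lw-taut surfaces to extract a single fundamental summand that is itself a fibre. The weight bound then follows from \zcref{thm:bound on weight of fundamental surfaces}, with a possible factor of $t$ coming from taking multiples.

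\textbf{Step 1: set-up.} Since $\chi(F_0)<0$, the manifold $M$ is irreducible, and any essential discs or spheres are excluded, so $M$ is boundary-irreducible. The fibre $F_0$ is incompressible and boundary-incompressible; by \zcref{thm:normalising surfaces} it is isotopic to a least-weight normal fibre.

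\textbf{Step 2: choose a minimiser and decompose it.} Among all normal surfaces representing, up to isotopy, a fibre with $\chi\ge\chi(F_0)$, choose one (call it $F$) that is lw-taut in the sense of \cite{tollefson-wang:taut-normal-surfaces}. This means $F$ is incompressible, and its weight, and then $\abs{\chi}$, are minimal in the homology class $[F]\in H_2(M,\boundary M)$. A fibre is Thurston-norm minimising in its class, so such an $F$ exists with $\chi(F)=\chi(F_0)$. Write
\[
\v{F}=\sum_i n_i\v{S_i}
\]
as a sum of fundamental surfaces, and pass to the minimal face of the projectivised normal solution space carrying $\v{F}$.

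\textbf{Step 3: the summands are fibres.} By the Tollefson--Wang results, every surface carried by this face is lw-taut: each summand $S_i$ is incompressible, norm-minimising, and its homology class lies in the same cone of the Thurston norm ball as $[F]$. Because $[F]$ lies in the interior of the cone over a fibred face, Thurston's theorem shows that every integral class in that open cone is represented by fibres. Each $S_i$ is norm-minimising and incompressible in such a class, so it is isotopic to a union of parallel fibres. Hence a connected component of some $S_i$ is a fibre, and it is normal.

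\textbf{Step 4: the Euler characteristic and weight bounds.} The Euler characteristic is additive under normal sum and every summand has $\chi\le0$. Therefore $\chi(S)\ge\chi(F)=\chi(F_0)$ for any component $S$ of a summand. Its weight is at most $w(S_i)\le t^2\cdot2^{7t+7}$ by \zcref{thm:bound on weight of fundamental surfaces}. The extra factor $2t$ in the target bound $t^3\cdot 2^{7t+8}$ leaves room for one fix-up: if the fundamental summands only generate, over $\QQ$, the face containing the fibre class, then a combination of at most $\dim$ summands with small coefficients will be needed.

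\textbf{Main obstacle.} Step 3 is the step I expect to be hardest: showing that the homology classes of the summands lie in the \emph{open} fibred cone rather than on its boundary. My first attempt is to use that $[F]=\sum n_i[S_i]$ with norm additivity, $\norm{[F]}=\sum n_i\norm{[S_i]}$, so that all $[S_i]$ lie in the same closed face of the norm ball. If some $[S_i]$ lies on the boundary of the cone, I would instead use the bounded-coefficient combination from Step 4 so that the sum lands in the interior.
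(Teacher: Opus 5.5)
Your Step~3 has a genuine gap, and it is the heart of the proof. Norm additivity on the carrying face (Tollefson--Wang) only shows that every $[S_i]$ lies in the \emph{closed} cone $K$ over the fibred face. A fundamental summand can sit on the boundary of $K$, where classes are not fibred, so there is no reason for any single $S_i$ to have a fibre component. If Step~3 were true you would get the stronger bound $t^2\cdot 2^{7t+7}$; the factor $2t$ in the statement is there because several summands must be added. You flag this yourself as the main obstacle. But the repair you propose, ``a combination of at most $\dim$ summands with small coefficients'', is left unjustified, and that is exactly where the missing idea lies.

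The paper's fix is as follows. Choose summands $S_1,\dots,S_n$ whose classes form a basis of the span of all the $[S_i]$, and set $F'=S_1+\dots+S_n$ with every coefficient equal to $1$. Since $F'$ is carried by the same face as $F$, it is a genuine normal surface and is lw-taut.

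Then show $[F']$ is in the interior of $K$ by an extremality argument. Suppose not, and let $\phi$ be a linear functional with $\phi\ge 0$ on $K$ and $\phi([F'])=0$. Since $[F']=\sum_{i\le n}[S_i]$ with each $\phi([S_i])\ge 0$, every $\phi([S_i])$ vanishes. Hence $\phi$ vanishes on their span, which contains $[F]=\sum_i n_i[S_i]$. This contradicts $[F]$ being interior to $K$, where $\phi>0$. So no tuning of coefficients is needed, and larger coefficients would break the stated bound. Thurston's theorem then makes every component of the norm-minimising surface $F'$ a fibre.

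For the count you also need $n\le\dim H_2(M,\partial M;\RR)\le 2t$. This holds because relative $2$-chains are generated by the at most $2t$ triangles not in $\partial M$. It gives $w(F')=\sum_{i\le n}w(S_i)\le 2t\cdot t^2\cdot 2^{7t+7}$. For the Euler characteristic, $\chi(F')=\sum_{i\le n}\chi(S_i)\ge\sum_i n_i\chi(S_i)=\chi(F)$, since each $\chi(S_i)\le 0$. A component of $F'$ then has $\chi\ge\chi(F')\ge\chi(F_0)$. With this step supplied, your outline is the paper's proof.
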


As remarked above, the proof of \zcref{thm:fibre of small weight} relies heavily on the tools developed in \cite{tollefson-wang:taut-normal-surfaces}, of which we give a very brief summary.
Let $\TTT$ be a triangulation of a compact connected oriented irreducible $3$\=/manifold $M$ with (possibly empty) toroidal boundary, and denote by $t$ the number of tetrahedra of $\TTT$.
The matching equations of $\TTT$ (see \zcref{fig:normal surfaces:matching equation}) define a system of linear equations in $\RR^{7t}$; the non\=/negative solutions of this system form a convex polyhedral cone $\P{\TTT}\subs\RR_{\ge 0}^{7t}$.
Denote by $\Pbar{\TTT}$ the set of points $\mathbf{x}\in\P{\TTT}$ such that $x_1+\ldots+x_{7t}=1$, which is a compact convex polyhedron.
In particular, we can talk about the \emph{faces} of $\Pbar{\TTT}$; more precisely, a face of $\Pbar{\TTT}$ is the intersection of $\Pbar{\TTT}$ with a hyperplane of the form $\{\mathbf{x}\in\RR^{7t}:\text{$x_i=0$ for $i\in\III$}\}$ for some subset $\III\subsetneq\{1,\ldots,7t\}$.
If $C$ is a face of $\Pbar{\TTT}$, we denote by $\cone{C}$ the set of positive scalar multiples of points in $C$.
More generally, we use the same notation for any subset $C$ of some Euclidean space $\RR^n$.

For every normal surface $F$ in $M$, its normal vector $\v{F}$ is a point in $\P{\TTT}\cap\ZZ^{7t}$.
Conversely, every point $\mathbf{w}\in\P{\TTT}\cap\ZZ^{7t}$ that also satisfies the consistency equations of $\TTT$ defines a normal surface $F_{\mathbf{w}}$ in $M$.
For a non\=/empty normal surface $F$, let $\vbar{F}$ be the projection of $\v{F}$ onto $\Pbar{\TTT}$ -- that is, the unique point in $\Pbar{\TTT}$ that is a scalar multiple of $\v{F}$; this will necessarily be a rational point.
We say that a face $C$ of $\Pbar{\TTT}$ \emph{carries} a normal surface $F$ if $\vbar{F}\in C$.
For every normal surface $F$, there is a unique minimal face $C$ of $\Pbar{\TTT}$ that carries $F$, which we denote by $C_F$.
Note that if a face $C$ of $\Pbar{\TTT}$ carries a normal surface, then every integral point in $\cone{C}$ will satisfy the consistency equations, and hence represent a normal surface in $M$.

For a compact orientable surface $F$, define
\[
\chi_-(F)=-\chi(F\setminus\{\text{sphere and disc components of $F$}\}).
\]
The \emph{Thurston seminorm} of a homology class $\alpha\in H_2(M,\boundary M;\RR)$ is defined as the minimum value of $\chi_-(F)$ over all oriented surfaces $F$ properly embedded in $M$ that represent $\alpha$.
This can be extended to a seminorm $\map{x}{H_2(M,\boundary M;\RR)}{\RR}$ by linearity, convexity, and continuity (see \cite{thurston:norm-homology-3} for details).
\Citeauthor{tollefson-wang:taut-normal-surfaces} call an oriented properly embedded incompressible boundary\=/incompressible surface $F\subs M$ \emph{taut} if its homology class $[F]\in H_2(M,\boundary M;\RR)$ is non\=/zero, no union of components of $F$ is homologically trivial, and $x([F])=\chi_-(F)$.
If, moreover, a taut surface is in general position with respect to $\TTT$ and has minimal weight amongst all taut general position surfaces in its homology class, then it is called \emph{lw\=/taut}; note that this notion of ``lw'' does not coincide with our notion of ``least\=/weight''.

The following is one of the main results of \cite{tollefson-wang:taut-normal-surfaces}.

\begin{theorem}[{\cite[Theorem~3.3 and Corollary 3.4]{tollefson-wang:taut-normal-surfaces}}]
\label{thm:lw-taut normal surfaces}
Let $F$ be an oriented lw\=/taut normal surface in $M$.
Then every normal surface carried by $C_F$ is lw\=/taut.
Moreover, there is a way to assign orientations to normal surfaces carried by $C_F$ such that, for every pair $G,H$ of normal surfaces carried by $C_F$, the following equalities hold:
\[
[G+H]=[G]+[H]\quad\text{and}\quad x([G+H])=x([G])+x([H]).
\]
\end{theorem}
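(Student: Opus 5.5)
The plan is to reduce everything to a single decomposition lemma about normal sums of lw\=/taut surfaces, and then use the minimality of the face $C_F$ to bring every surface carried by $C_F$ into such a sum.

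First, the reduction to sums. Since $C_F$ is the minimal face carrying $F$, the point $\vbar{F}$ lies in the relative interior of $C_F$. So for any normal surface $S$ carried by $C_F$ there are an integer $n\ge 1$ and a normal surface $S'$ carried by $C_F$ with $n\v{F}=\v{S}+\v{S'}$. The surface with vector $n\v{F}$ is $n$ parallel copies of $F$, and it is again lw\=/taut: it is taut because $x(n[F])=n\,x([F])=\chi_-(nF)$. Its least\=/weight property within its class follows from the lw\=/taut property of $F$, after cut\=/and\=/paste of a lighter representative into $n$ pieces. It therefore suffices to prove the following.

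\emph{Decomposition lemma.} If $K=G+H$ is an lw\=/taut normal sum, then:
\begin{itemize}
\item $G$ and $H$ can be oriented so that every regular exchange along $G\cap H$ is orientation\=/consistent;
\item with these orientations, $[K]=[G]+[H]$;
\item $G$ and $H$ are lw\=/taut, and $x([K])=x([G])+x([H])$.
\end{itemize}

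To prove the lemma, we first arrange that $G$ and $H$ are in reduced form, meaning that the number of curves in $G\cap H$ is minimal among all normal sum presentations. For each curve of $G\cap H$ there are two ways to resolve the intersection. The regular exchange produces $K$, and the irregular exchange produces some other surface. Suppose the orientations of $G$ and $H$ cannot be chosen so that all regular exchanges respect them. Then we perform irregular exchanges along suitable curves and reroute along the resulting patches. This produces a surface in the same homology class as $K$ whose weight is at most $w(K)$, but which either has a fold that can be normalised away, strictly lowering the weight, or has strictly smaller $\chi_-$. Either outcome contradicts lw\=/tautness. This orientation\=/consistency step is the main obstacle. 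It requires the Tollefson--Wang analysis of \emph{patches}, the components of $G\cup H$ cut along $G\cap H$. One must rule out disc patches, since by irreducibility and incompressibility they would allow a reduction in the number of intersection curves or an isotopy lowering weight. I would try first to show that any disc patch, or any orientation conflict, yields a \emph{trivial} exchange curve, and hence contradicts reduced form.

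Once the orientations are consistent, homology adds, $[K]=[G]+[H]$, and Euler characteristic always adds under normal sum. Discs and spheres cannot occur among the components of $G$ and $H$, by the patch argument and the incompressibility of $K$, so $\chi_-$ also adds. This gives the chain
\[
x([K])=\chi_-(K)=\chi_-(G)+\chi_-(H)\ge x([G])+x([H])\ge x([G]+[H])=x([K]),
\]
which forces $\chi_-(G)=x([G])$, $\chi_-(H)=x([H])$, and additivity of $x$. Incompressibility and boundary\=/incompressibility of $G$ and $H$ follow similarly: a compression would lower $\chi_-$ in the class $[G]$. For the least\=/weight property within the homology class, suppose $G$ were not least\=/weight, and let $G'$ be a lighter taut representative of $[G]$. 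Cut\=/and\=/paste of $G'$ with $H$ would give a representative of $[K]$ of weight less than $w(G)+w(H)=w(K)$ with the same $\chi_-$, a contradiction. The same argument applies to $H$.

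Finally, to get additivity for arbitrary $G$ and $H$ carried by $C_F$, apply the lemma to $N\v{F}=\v{G}+\v{H}+\v{R}$ for a suitable $N$ and a suitable $R$ carried by $C_F$. The orientations are fixed once and for all by their compatibility with $F$, so they are independent of the chosen sum decomposition.
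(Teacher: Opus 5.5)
\paragraph{Verdict.} The paper does not prove this statement; it imports it from Tollefson--Wang. Your architecture matches theirs: reduce, via minimality of $C_F$, to a sum $nF=S+S'$, prove a decomposition lemma for lw\=/taut sums, then deduce additivity. The reduction to $nF=S+S'$ is fine. The decomposition lemma, however, carries the whole content, and it has genuine gaps.

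\paragraph{The orientation step.} You leave it as an intention, and the mechanism you describe is not quite the right one.
\begin{itemize}
\item You must orient the patches of $G$ and $H$ using the orientation of $K$.
\item At a double curve, the induced patch orientations of $G$ are inconsistent exactly when those of $H$ are, and there the orientation\=/respecting exchange is the irregular one.
\item Only with these orientations is the resulting surface $K'$ homologous to $K$. With arbitrary orientations of $G$ and $H$ you get $[G]+[H]$, which you do not yet know equals $[K]$.
\item The surface $K'$ has $\chi(K')=\chi(K)$, $w(K')=w(K)$ and a fold, so it can be made lighter.
\end{itemize}
To contradict lw\=/tautness, though, the lighter surface must still be taut. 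That requires $K'$ to have no sphere or disc components; otherwise $\chi_-(K')>x([K])$. This is precisely a no\=/disc\=/patch lemma for reduced sums of a least\=/weight incompressible surface (of Jaco--Oertel type). You invoke it but never prove it, and your dichotomy ``fold or strictly smaller $\chi_-$'' does not capture it.

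\paragraph{Deriving lw\=/tautness of $G$ and $H$ from the norm chain.} This fails outright in the case the paper needs. Compressing a torus or annulus component produces a sphere or discs without changing $\chi_-$. So norm\=/minimality does not give incompressibility. It also says nothing about $[G]\neq 0$, or about unions of components of $G$ being null\=/homologous, which you never check.

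The paper applies the theorem to torus bundles, where $x\equiv 0$ and your chain reads $0\ge 0\ge 0$. There it yields nothing, yet the proposition on toroidal fibres of small weight uses exactly that the summands are incompressible and homologically non\=/trivial. These properties must come from the least\=/weight, reduced\=/form exchange argument, not from the Thurston norm.

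\paragraph{Least weight of the summands.} Cut\=/and\=/pasting a lighter $G'$ with $H$ does not give a representative of $[K]$ ``with the same $\chi_-$''. The oriented double\=/curve sum can have sphere or disc components, and discarding them raises $\chi_-$ above $x([K])$. Avoiding this requires first eliminating the trivial curves of $G'\cap H$, which may cost weight.

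\paragraph{Independence of orientations.} The claim that the orientations do not depend on the chosen decomposition is the actual content of Tollefson--Wang's Corollary~3.4. It needs an argument, not an assertion.
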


We are now ready to prove \zcref{thm:fibre of small weight}.

\begin{proof}[Proof of \zcref{thm:fibre of small weight}]
Let $G$ be an oriented least\=/weight normal fibre of $M$ that is isotopic to $F_0$.
Note that, as discussed at the beginning of \cite[\S~3]{thurston:norm-homology-3}, every incompressible surface in $M$ that is homologous to $F_0$ is actually isotopic to it; in particular, this implies that $G$ is lw\=/taut.
From now on, all the surfaces we consider will be carried by $C_G$; we will implicitly endow them with the orientation given by \zcref{thm:lw-taut normal surfaces}.
The same theorem then implies that every normal surface carried by $C_G$ is lw\=/taut, and that the operation of taking the homology class is linear with respect to the normal sum on $C_G$; similarly, the Thurston seminorm $x$ is linear with respect to the normal sum on $C_G$.

Let $C'$ be the minimal face of the $x$\=/unit ball in $H_2(M,\boundary M;\RR)$ such that $[G]$ lies in $\cone{C'}$.
Crucially, by the linearity properties mentioned above, the homology classes of all the surfaces carried by $C_G$ will also lie in the closure of $\cone{C'}$.
Since $G$ is a fibre of $M$, Theorems 3 and 5 of \cite{thurston:norm-homology-3} imply that $C'$ is a top\=/dimensional face of the $x$\=/unit ball, and every integral homology class in the interior of $\cone{C'}$ is represented by a union of parallel fibres of $M$; in fact, every oriented incompressible surface with no sphere and disc components and whose homology class lies in the interior of $\cone{C'}$ is a union of parallel fibres of $M$.

Let now $F_1,\ldots,F_m$ be fundamental normal surfaces in $M$ such that $G=F_1+\ldots+F_m$; it is clear that these are all carried by $C_G$.
Up to reordering, we can assume that $\{[F_1],\ldots,[F_n]\}$ is a basis of the subspace of $H_2(M,\boundary M;\RR)$ spanned by $[F_1],\ldots,[F_m]$ for some $1\le n\le m$.
Note that the integer $n$ is bounded above by
\[
n\le\dim H_2(M,\boundary M;\RR)\le 2t,
\]
where the second inequality follows from the fact that there are at most $2t$ triangles of $\TTT$ that do not lie on $\boundary M$.
Let $F'=F_1+\ldots+F_n$.
We know that there is at least one linear combination of $[F_1],\ldots,[F_n]$ with positive coefficients that lies in the interior of $\cone{C'}$ -- namely, $[G]$.
Therefore, the class $[F']$ must also lie in the interior of $\cone{C'}$.
In particular, since $F'$ is lw\=/taut, this implies that each component of $F'$ must be a fibre of $M$.

Concerning the weight of $F'$, \zcref{thm:bound on weight of fundamental surfaces} gives the inequality
\[
w(F_i)\le t^2\cdot 2^{7t+7}\qquad\text{for every $1\le i\le n$},
\]
from which we get the bound
\[
w(F')=w(F_1)+\ldots+w(F_n)\le n\cdot t^2\cdot 2^{7t+7}\le t^3\cdot 2^{7t+8}.
\]
The Euler characteristic of $F'$ satisfies
\begin{alignaside*}
-\chi(F')&=x([F'])&&\aside{since $F'$ is lw\=/taut}\\
&=x([F_1]+\ldots+[F_n])\\
&\le x([F_1])+\ldots+x([F_m])&&\aside{by convexity of $x$}\\
&=x([G])&&\aside{by linearity of $x$ on $C_G$}\\
&=-\chi(G)&&\aside{since $G$ is lw\=/taut.}
\end{alignaside*}
We conclude the proof by taking $F$ to be a component of $F'$.
As previously observed, the surface $F$ is a fibre of $M$; its weight and Euler characteristic satisfy
\[
w(F)\le w(F')\le t^3\cdot 2^{7t+8}\qquad\text{and}\qquad\chi(F)\ge\chi(F')\ge\chi(G)=\chi(F_0),
\]
as desired.
\end{proof}

\egroup

\subsection{Toroidal fibres}
\label{sec:small toroidal fibres}

As we will see in the proof of \zcref{thm:fibredness detection is in NP}, recognising $3$\=/manifolds that fibre over the circle with fibre a sphere, disc, or annulus is already known to be in \NP{}.
Since \zcref{thm:fibre of small weight} covers $3$\=/manifolds with fibre of negative Euler characteristic, the only remaining case is when the fibre is a torus.
In this setting, the argument of \zcref{thm:fibre of small weight} does not work, since it heavily relies on the Thurston seminorm, which vanishes everywhere for these $3$\=/manifolds.
Fortunately, torus bundles are simple enough that we can completely classify their orientable non\=/separating incompressible surfaces.

Let $M$ be a compact oriented $3$\=/manifold that fibres over the circle with fibre a torus, and let $F_0\subs M$ a transversally oriented toroidal fibre of $M$.
Our goal is to show that all connected orientable incompressible surfaces in $M$ that are not homologically trivial are fibres of \emph{some} fibration of $M$ (that is, not necessarily the one defined by $F_0$).
Upon fixing a basis of $H_1(F_0;\ZZ)$, the monodromy $\varphi$ of $F_0$ is represented by an element of $\SL{2}{\ZZ}$.
It is well\=/known that if an element of $\SL{2}{\ZZ}$ has an integral eigenvector, then it is conjugate to
\[
\mathbf{L}_n^+=\begin{pmatrix}1&0\\n&1\end{pmatrix}\qquad\text{or}\qquad\mathbf{L}_n^-=\begin{pmatrix}-1&0\\n&-1\end{pmatrix}
\] 
for some $n\in\ZZ$.

\begin{proposition}[Incompressible surfaces in torus bundles]
\label{thm:incompressible surfaces in torus bundles}
In the setting above, let $F$ be a closed orientable incompressible non\=/separating surface embedded in $M$.
Then $F$ is a toroidal fibre of some fibration of $M$.
\end{proposition}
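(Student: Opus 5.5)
The plan is to reduce to the case where $F$ meets the fibration nicely, then use Euler characteristic and incompressibility to force $F$ to be a torus.

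First, isotope $F$ to be in ``normal form'' with respect to the fibration $M\to S^1$ with fibre $F_0$. If $F$ can be isotoped to be disjoint from $F_0$, then it lies in $M\cut F_0\homeo T^2\times[0,1]$. A closed incompressible surface in $T^2\times[0,1]$ is a union of tori parallel to the boundary, so $F$ is isotopic to $F_0$ and hence is a fibre.

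Otherwise, isotope $F$ to be transverse to $F_0$ and minimise $\abs{F\cap F_0}$. Incompressibility of $F$ and $F_0$, together with irreducibility of $M$ (torus bundles are irreducible, covered by $\RR^3$), lets us remove inessential intersection curves by the standard innermost disc argument. So every curve of $F\cap F_0$ is essential in both surfaces. Cutting $F$ along $F_0$ yields a surface $F\cut F_0$ properly embedded in $T^2\times[0,1]$, which is incompressible and, after further minimising, boundary-incompressible. Its boundary is essential in the boundary tori, so no disc components arise. By \zcref{thm:essential surfaces in interval bundles are vertical}, or directly since $\hboundary$ is all of $\boundary$ here, the components are essential annuli in $T^2\times[0,1]$. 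Essential annuli in $T^2\times[0,1]$ are vertical: they have the form $c\times[0,1]$ up to isotopy, for $c$ an essential simple closed curve, or they are boundary-parallel annuli with both ends on one side. The latter are excluded by minimality of $F\cap F_0$. Thus $F$ is a union of vertical annuli $c_j\times[0,1]$ glued via $\varphi$, so $\chi(F)=0$ and $F$ is a torus. Moreover, the slopes $c_j$ are all parallel to a single curve $c$ with $\varphi(c)$ isotopic to $c$. This means $\varphi$ has an integral eigenvector with eigenvalue $\pm1$, so up to conjugation $\varphi=\mathbf{L}_n^\pm$. Since $F$ is orientable and non\-separating, the gluing must close up consistently.

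Next, exhibit the new fibration. With $\varphi=\mathbf{L}_n^\pm$, the bundle $M$ is $T^2\times[0,1]/\sim$ where $(x,y)\sim$ the image under $\mathbf{L}_n^\pm$, and $c$ is the $y$-direction, the fixed eigendirection. Then $M$ is a circle bundle over the Klein bottle or torus, or equivalently a Seifert fibred manifold. The tori $c\times S^1$, when $\varphi$ preserves orientation on $c$, that is the $+$ case or an appropriate double, fibre $M$ over the circle via the $x$-coordinate. Concretely, the $x$-coordinate on $T^2$ descends to a map $M\to S^1$ when $\varphi=\mathbf{L}_n^+$, because $\mathbf{L}_n^+$ preserves $x$. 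Its fibres are the tori $\{x\}\times S^1_y\times[0,1]/\sim$, which are isotopic to $F$. In the $\mathbf{L}_n^-$ case, $\varphi$ reverses $c$'s orientation, and then a union of annuli $c\times[0,1]$ glues to a non-orientable surface or a separating one. Non\-separating orientable $F$ therefore forces the $+$ case, or an even number of sheets, contradicting connectedness.

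The main obstacle is the bookkeeping in this last step. The hard part is not the reduction to vertical annuli but showing that the resulting torus, when the annuli are glued with the monodromy and possibly several sheets or nonzero $n$, is isotopic to a fibre of an explicit fibration. This requires ruling out the $\mathbf{L}_n^-$ and multi-sheet configurations using orientability and non\-separation. I would handle this by computing $[F]\in H_2(M;\ZZ)$ and using the classification of classes in $H^1(M;\ZZ)$ whose kernel is finitely generated, in the spirit of \citeauthor{stallings:fibering-certain-3}. Specifically, I would check that the class dual to $F$ is primitive and that its restriction to $\pi_1$ has finitely generated kernel. Stallings' theorem then gives a fibration whose fibre is incompressible and homologous to $F$, hence isotopic to $F$ by the same uniqueness of incompressible representatives used earlier.
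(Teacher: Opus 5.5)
\textbf{Verdict: there is a genuine gap.} Your reduction to vertical annuli and to $\varphi=\mathbf{L}_n^\pm$ matches the paper, but the final step fails as written.

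\textbf{The $\mathbf{L}_n^+$ case.} Let $p(x,y,z)=(x,z)$ be the circle-bundle projection of $M$ onto the torus $T$ with coordinates $(x,z)$. After straightening, $F=p^{-1}(a)$ for some simple closed curve $a\subseteq T$ crossing $\{z=0\}$. The curve $a$ may have any slope with this property, whereas the $x$-level tori are only the preimages of the curves $\{x=\mathrm{const}\}$. For instance, $p^{-1}(\{x=z\})$ is a fibre of $(x,y,z)\mapsto x-z$ and meets $F_0$ in one curve. It is not even homologous to an $x$-level torus, since $[\{x=z\}]\neq\pm[\{x=0\}]$ in $H_1(T)$ and $p^*\colon H^1(T;\ZZ)\to H^1(M;\ZZ)$ is injective. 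So your claim that the $x$-level tori are isotopic to $F$ is false in general. The missing idea is to change the fibration of the base: compose $p$ with a linear projection $T\to S^1$ whose fibres are parallel to $a$. Equivalently, as the paper does, lift a homeomorphism of $T$ taking $a$ to $\RR/\ZZ\times\{0\}$.

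\textbf{The $\mathbf{L}_n^-$ case.} Your dismissal is unsupported and partly wrong. An even number of sheets does not contradict connectedness. The separating two-sided curve $b=\{x=1/4\}\cup\{x=3/4\}$ on the Klein bottle is connected, and $p^{-1}(b)$ is a connected two-sheeted torus. What excludes it is that it separates $M$. The paper gets this dichotomy from the fact that $m$ and $b$ are the only two-sided essential curves on $K$ up to isotopy.

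\textbf{The Stallings fallback.} This route does close the gap, but you omit the one point that needs an argument: finite generation of the kernel. It holds for every epimorphism $\pi_1(M)\to\ZZ$, because $\pi_1(M)\cong\ZZ^2\rtimes_\varphi\ZZ$ is polycyclic, so all its subgroups are finitely generated. With this in hand, the whole normal-form analysis becomes superfluous:
\begin{itemize}
\item Since $F$ is connected, two-sided and non-separating, some loop meets it exactly once, so its dual class $\alpha\in H^1(M;\ZZ)$ is onto.
\item Stallings' theorem ($M$ is irreducible) gives a fibration inducing $\alpha$. Its fibre $S$ is homologous to $F$, and is a torus because $\pi_1(S)\cong\ker\alpha$ is infinite and solvable.
\item The fact from Thurston used in the paper's proof of the small-weight fibre proposition (an incompressible surface homologous to a fibre is isotopic to it) then makes $F$ isotopic to $S$.
\end{itemize}
This argument is shorter than the paper's. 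It also disposes of $\mathbf{L}_n^-$ without any curve classification, since there $b_1(M)=1$. The cost is relying on Stallings' theorem and Thurston's isotopy result instead of the explicit circle-bundle models over $T$ and $K$.
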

\begin{proof}
Since $F_0$ is a fibre of $M$, the $3$\=/manifold $M'=M\cut F_0$ is homeomorphic to $F_0\times[0,1]$.
In fact, it is useful to have a concrete model for $M'$ as $M'=\RR^2/\ZZ^2\times[0,1]$, and for $M$ as
\[
M=\quotient{\RR^2/\ZZ^2\times[0,1]}{\{(x,1)\sim(\varphi(x),0):x\in\RR^2/\ZZ^2\}},
\]
where $F_0=\RR^2/\ZZ^2\times\{0\}$.
Isotope $F$ so that it intersects $F_0$ transversely in the smallest possible number of curves.
Denote by $F'$ the intersection of $F$ with $M'$.
We now employ a very classical argument -- sketched, for instance, in the proof of \cite[Proposition~11.4.12]{martelli:introduction-geometric-topology} -- to show that $F'$ must be a collection of vertical annuli

First, we show that no component of $F\cap F_0$ can bound a disc in $F_0$.
For a contradiction, suppose that there is a disc $D$ embedded in $F_0$ such that $D\cap F=\boundary D$ (up to taking an innermost disc, this is not restrictive).
Since $F$ is incompressible, there must be a disc $D'$ embedded in $F$ such that $\boundary D'=\boundary D$.
By irreducibility of $M$, the sphere $D\cup D'$ must bound a $3$\=/ball $B$ in $M$; note that $F\cap B=D'$.
The $3$\=/ball $B$ can be used to isotope $D'$ to $D$, and then push it slightly past $D$, thus removing $\boundary D$ from the intersection $F\cap F_0$; this contradicts the minimality of the intersection.
By swapping the roles of $F$ and $F_0$ -- note that we only used the fact that $F$ is incompressible -- we conclude that no component of $F\cap F_0$ can bound a disc in $F_0$.
In particular, we have shown that $F\cap F_0$ is a collection of parallel essential curves in the torus $F_0$.

Next, we show that $F'$ is incompressible and boundary\=/incompressible in $M'$.
Let $D$ be a compressing disc for $F'$ in $M'$.
Since $F$ is incompressible in $M$, there is a disc $D'$ embedded in $F$ such that $\boundary D'=\boundary D$.
If the disc $D'$ is contained in $M'$, then we see that $D$ is trivial.
Otherwise, we find a component of $F\cap F_0$ that bounds a disc in $F$, the existence of which we have already ruled out; this proves that $F'$ is incompressible.
Let now $D$ be a boundary\=/compressing disc for $F'$ in $M'$.
Up to taking an innermost disc, we can assume that $D\cap F'\subs\boundary D$.
Let $a_1$ and $a_2$ the two components of $\boundary F'$ intersecting $D$; these are two parallel curves on a component of $\hboundary M'$ -- importantly, they are distinct because $F'$ is orientable.
Compress $F'$ along $D$ to obtain a new surface $F''$.
Denote by $a$ the boundary component of $F''$ that is obtained by surgery on $a_1\cup a_2$ along $D\cap\hboundary M'$.
Since $a_1$ and $a_2$ are parallel, we see that $a$ bounds a disc $D'$ in $\hboundary M'$.
It is not hard to show, using incompressibility of $F'$ and irreducibility of $M'$, that the component of $F''$ containing $a$ cobounds a $3$\=/ball in $M'$ with $D'$.
This means that the component of $F'$ containing $a_1$ and $a_2$ is an annulus cobounding a solid torus in $M'$ with an annulus in $\hboundary M'$.
This solid torus can be used to isotope said component of $F'$ onto $F_0$, and then slightly past it, thus removing $a_1$ and $a_2$ from the intersection $F\cap F_0$; this contradicts the minimality of the intersection.

In conclusion, we found that $F'$ is an orientable incompressible boundary\=/incompressible surface in $M'$.
It is well\=/known (see \cite[Proposition~9.3.18]{martelli:introduction-geometric-topology}) that any such surface is isotopic in $M'$ either to a union of \emph{horizontal} surfaces -- that is, surfaces of the form $F_0\times\{t\}$ for some $t\in[0,1]$ -- or to a union of \emph{vertical} surfaces -- that is, surfaces of the form $a\times[0,1]$ for some essential curve $a$ in $F_0$.
If $F'$ is horizontal, we deduce that $F$ is isotopic to $F_0$ in $M$, and we are done.
If $F'$ is vertical, then it is a union of annuli; in particular, this implies that there is a slope on $F_0$ that is preserved by $\varphi$.
In other words, the monodromy $\varphi$ has an integral eigenvector, and it is therefore conjugate to $\mathbf{L}_n^+$ or $\mathbf{L}_n^-$ for some integer $n$.
In fact, it is not restrictive to assume that $\varphi=\mathbf{L}_n^+$ or $\varphi=\mathbf{L}_n^-$.
In particular, this implies that each component of $F\cap F_0$ is a curve isotopic in $F_0$ to $\{0\}\times\RR/\ZZ\times\{0\}$.
By ``straightening'' $F'$, we can assume that each component of $F'$ is a flat annulus of the form
\[
\{(x,y,z)\in\RR^2\times[0,1]:\alpha\cdot x+\beta\cdot z=\gamma\}\subs\RR^2/\ZZ^2\times[0,1]
\]
for constants $\alpha,\beta,\gamma\in\QQ$.

\paragraph{When $\varphi=\mathbf{L}_n^+$.}
This case is depicted in \zcref{fig:incompressible surfaces in torus bundles:+}.
Let $T$ be the torus $\RR^2/\ZZ^2$, and consider the projection $\map{p}{M}{T}$ defined by
\[
p(x,y,z)=(x,z).
\]
It is easy to see that this projection defines a circle bundle structure over $T$ on $M$.
We see that $F_0=p^{-1}(\RR/\ZZ\times\{0\})$, while $F=p^{-1}(a)$ for some essential curve $a\subs T$.
Let $\map{\psi}{T}{T}$ be a homeomorphism that sends $a$ to $\RR/\ZZ\times\{0\}$.
This homeomorphism lifts to a homeomorphism of $M$ sending $F$ to $F_0$; since $F_0$ is a toroidal fibre of $M$, we conclude that the same holds for $F$.

\paragraph{When $\varphi=\mathbf{L}_n^-$.}
In this case, depicted in \zcref{fig:incompressible surfaces in torus bundles:-}, the $3$\=/manifold $M$ is a circle bundle over the Klein bottle $K$.
More precisely, if we model $K$ as the quotient of the Euclidean plane $\RR^2$ by the group of isometries generated by $(x,z)\mapsto(x+1,z)$ and $(x,z)\mapsto(-x,z+1)$, then the projection map $\map{p}{M}{K}$ is given by
\[
p(x,y,z)=(x,z).
\]
Like in the torus bundle case, we have that $F=p^{-1}(a)$ for some two\=/sided essential curve $a\subs K$.
However, unlike in the torus bundle case, there are only two such curves up to isotopy.
One is the meridian $m=\{(x,0)\in K\}$, and its preimage $p^{-1}(m)$ in $M$ is precisely $F_0$.
The other two\=/sided essential curve is
\[
b=\{(x,z)\in K:\text{$x=1/4$ or $x=3/4$}\},
\]
and it is separating in $K$; therefore, its preimage $p^{-1}(b)$ separates $M$.
Since we are assuming that $F$ is non\=/separating, we conclude that $F$ must be isotopic to $F_0$ in $M$; in particular, it is a fibre of $M$.
\end{proof}

\begin{myfigure}
\begin{subfigure}{.45\textwidth}
\centering
\myfiguresource[scale=.29]{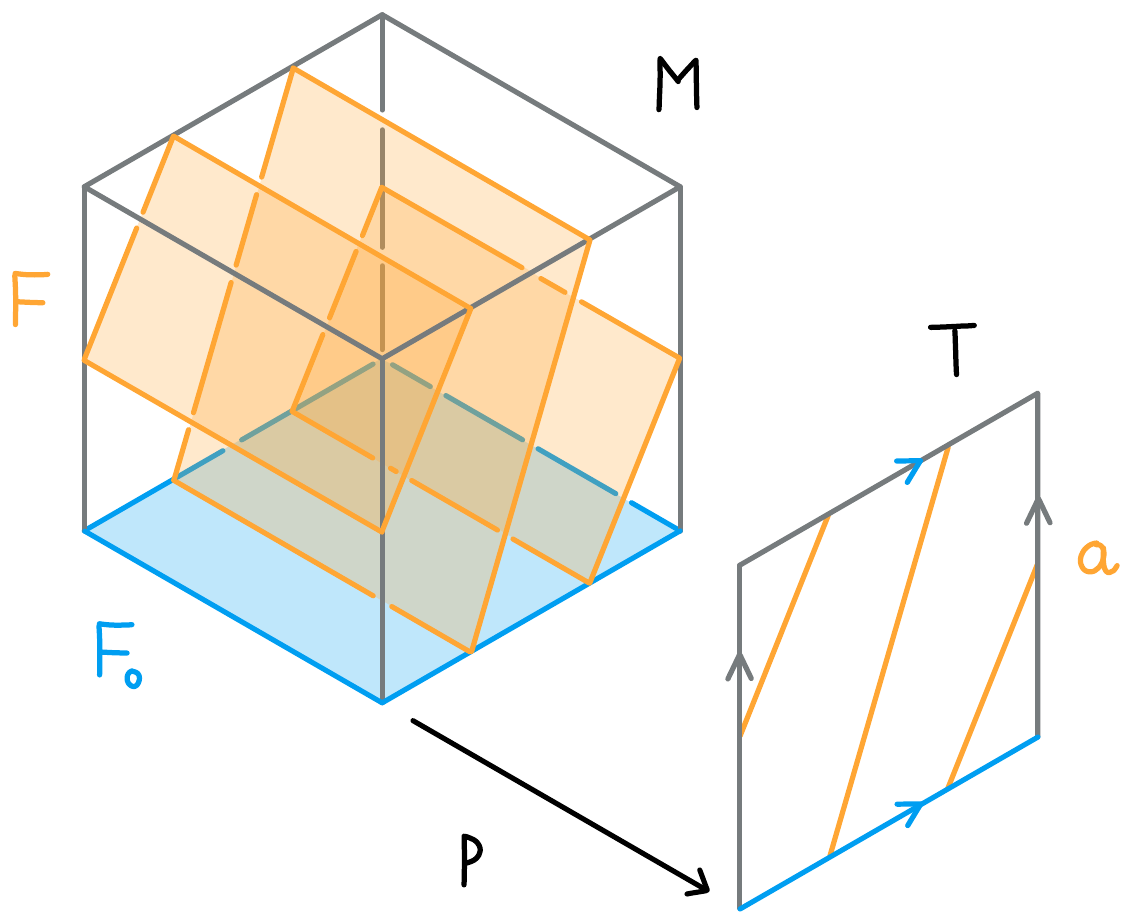}
\caption{}
\label{fig:incompressible surfaces in torus bundles:+}
\end{subfigure}
\hfill
\begin{subfigure}{.45\textwidth}
\centering
\myfiguresource[scale=.29]{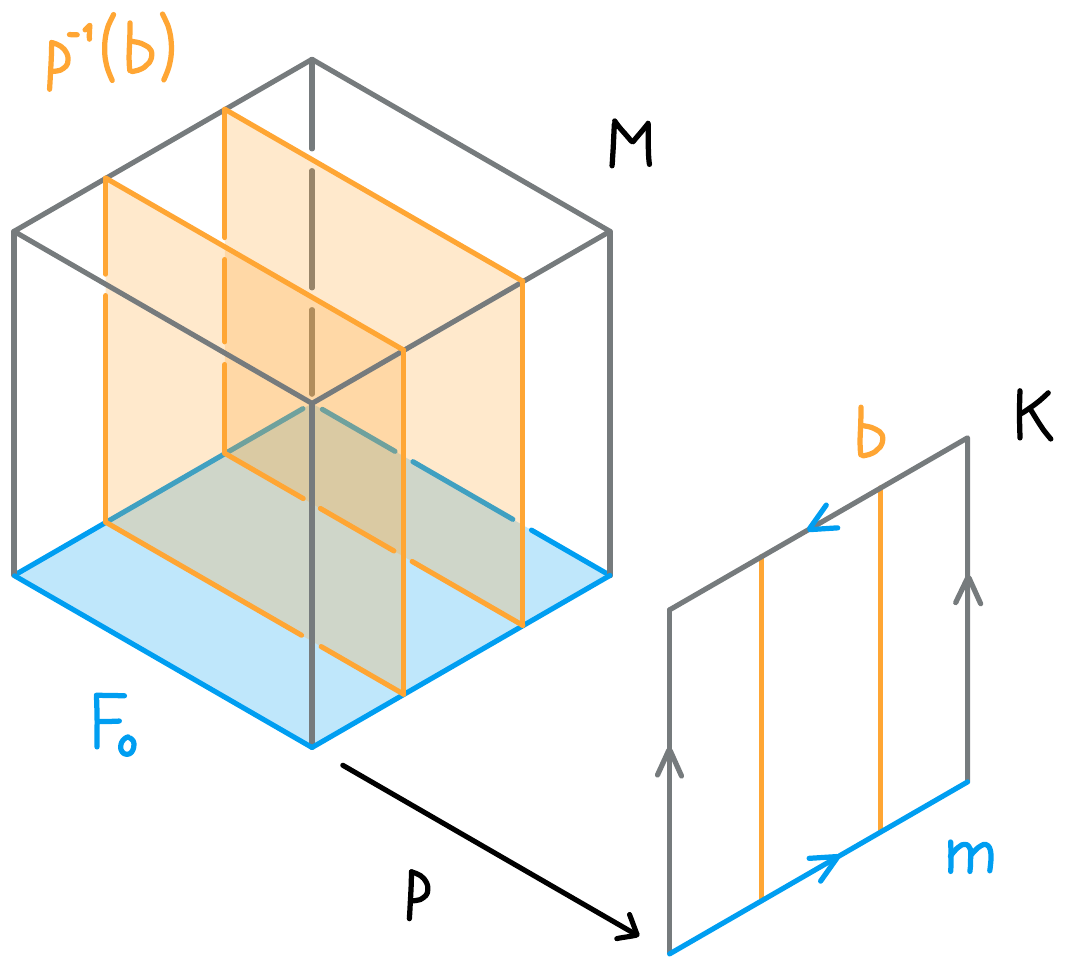}
\caption{}
\label{fig:incompressible surfaces in torus bundles:-}
\end{subfigure}
\caption{\subref{fig:incompressible surfaces in torus bundles:+} When the monodromy $\varphi$ is conjugate to $\mathbf{L}_n^+$ for some $n\in\ZZ$, the $3$\=/manifold $M$ is a circle bundle over the torus $T$; the surface $F$ is related to $F_0$ by a homeomorphism of $M$.
\subref{fig:incompressible surfaces in torus bundles:-} When the monodromy $\varphi$ is conjugate to $\mathbf{L}_n^-$ for some $n\in\ZZ$, the $3$\=/manifold $M$ is a circle bundle over the Klein bottle $K$; the surface $F$ is isotopic to $F_0$, since $p^{-1}(b)$ is separating.}
\end{myfigure}

As a consequence of the previous proposition, we find that triangulated torus bundles admit a normal fibre of bounded weight.

\begin{proposition}[A toroidal fibre of small weight]
\label{thm:toroidal fibre of small weight}
Let $\TTT$ be a triangulation of a compact connected orientable $3$\=/manifold $M$ with $t$ tetrahedra.
Suppose that $M$ fibres over the circle with fibre a torus.
Then there is a toroidal normal fibre $F$ of $M$ with
\[
w(F)\le t^2\cdot 2^{7t+7}.
\]
\end{proposition}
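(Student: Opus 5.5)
We would start from a least-weight normal toroidal fibre and split it into fundamental pieces, showing that one fundamental summand is already a toroidal fibre. Fix a toroidal fibre $F_0$ of $M$. Since $M$ is a torus bundle, it is irreducible. Its boundary is empty, because the fibre is closed, so boundary-irreducibility is vacuous. The fibre $F_0$ is incompressible, so by \zcref{thm:normalising surfaces} it is isotopic to a normal surface $G$. Among all normal surfaces isotopic to $F_0$, we take $G$ of least weight.

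Next, write $G=F_1+\dots+F_m$ as a normal sum of fundamental normal surfaces. Each $F_i$ then satisfies $w(F_i)\le t^2\cdot 2^{7t+7}$ by \zcref{thm:bound on weight of fundamental surfaces}, so it suffices to show that some component of some $F_i$ is a toroidal fibre. Two standard facts about normal sums are needed here.
\begin{itemize}
\item Euler characteristic is additive, so $0=\chi(G)=\sum_i\chi(F_i)$.
\item Since $G$ has least weight in its isotopy class and is incompressible, the classical Jaco--Oertel / Jaco--Rubinstein argument shows that, after choosing the summation to minimise the number of exchange curves, every summand is incompressible. Moreover, no summand is a sphere, a projective plane, or a disc; here we use that $M$ is irreducible, orientable, and closed, and contains no $\RR P^2$ except in trivial cases.
\end{itemize}
Consequently each component of each $F_i$ has $\chi\le 0$. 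Combined with $\sum_i\chi(F_i)=0$, this forces every component to be a torus or a Klein bottle.

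We then work in homology. Over $\ZZ/2$ we have $[G]=\sum_i[F_i]$, and $[G]\neq 0$ because $G$ is non-separating. Hence some component $F$ of some $F_i$ is non-separating. If this $F$ is one-sided, we replace it by the boundary of its regular neighbourhood. That surface is normal with vector $2\v{F}$, has weight at most $2w(F_i)$, and is a torus. This costs a factor of $2$, which would overshoot the stated bound, so instead we argue directly: a one-sided Klein bottle or torus in an orientable torus bundle would have non-orientable or non-separating double issues. I would prefer to rule out one-sided summands by orientation arguments, using \zcref{thm:lw-taut normal surfaces}-style reasoning or the fact that in an orientable manifold a one-sided surface is non-orientable, hence a Klein bottle. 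Once $F$ is a two-sided, hence orientable, incompressible non-separating torus, \zcref{thm:incompressible surfaces in torus bundles} shows that $F$ is a toroidal fibre of some fibration. Its weight is at most $w(F_i)\le t^2\cdot 2^{7t+7}$, which is the required bound.

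The main obstacle is the second step: proving that the summands are incompressible, not spheres or discs, and that a non-separating summand can be taken orientable (two-sided) without losing the weight bound. For the incompressibility I would cite the least-weight normal-sum lemma. If one-sided Klein bottles cannot be excluded, I would fall back on the double cover argument above and accept a slightly weaker constant, or observe that non-separating mod-$2$ classes can be detected by a two-sided summand via the homomorphism to $\ZZ/2$ given by intersection with a dual loop.
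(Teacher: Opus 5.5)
There is a genuine gap at exactly the step you flag as the main obstacle, and your fallbacks do not close it.

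First, the Jaco--Oertel lemma controls a two-term sum $G=G_1+G_2$ in reduced form with $G$ least weight. Applying it to every summand of $G=F_1+\dots+F_m$ would require the partial sums to inherit the least-weight property, which you do not justify.

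More seriously, nothing in your argument rules out one-sided summands. Incompressibility, the Euler characteristic count and the mod-$2$ count only produce a component $F$ with $\chi(F)=0$ and $[F]\neq0$ in $H_2(M;\ZZ/2)$. Such an $F$ can a priori be a one-sided Klein bottle; these do occur, e.g.\ as preimages of one-sided curves of the base in the circle bundles over the Klein bottle coming from $\mathbf{L}_n^-$. Your doubling fallback fails outright, not just by a constant: $\partial\NNN(F)$ bounds the twisted $I$-bundle $\NNN(F)$, so it is a separating torus and never a fibre. The dual-loop fallback only gives a component meeting the loop an odd number of times, and that component may again be one-sided; nothing prevents the mod-$2$ class of $G$ from being carried entirely by one-sided components. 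Observing that one-sided surfaces in orientable manifolds are non-orientable names the bad case but does not exclude it.

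The missing ingredient is \zcref{thm:lw-taut normal surfaces}, which settles all of this at once. As observed in the proof of \zcref{thm:fibre of small weight}, $G$ is lw-taut. Each fundamental summand $F_i$ has support contained in that of $G$, so it is carried by $C_G$ and is itself lw-taut. Hence:
\begin{itemize}
\item $F_i$ is oriented, so two-sided, and incompressible;
\item no union of components of $F_i$ is homologically trivial;
\item $\chi_-(F_i)=x([F_i])=0$, since the Thurston seminorm vanishes on a torus bundle.
\end{itemize}
So every component of $F_i$ is a non-separating incompressible torus, hence a toroidal fibre by \zcref{thm:incompressible surfaces in torus bundles}, of weight at most $t^2\cdot2^{7t+7}$. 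With this, your route through a full fundamental decomposition works, and the Euler characteristic and mod-$2$ bookkeeping become unnecessary.

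The paper uses the same theorem in a slightly different way. It takes $F$ of least weight among all toroidal normal fibres and writes $F=G_1+G_2$ minimising the number of components of $G_1\cap G_2$, so that $G_1$ and $G_2$ are connected by \cite[Lemma~3.3.30]{matveev:algorithmic-topology-classification}. Via Tollefson--Wang and \zcref{thm:incompressible surfaces in torus bundles}, $G_1$ and $G_2$ would then be toroidal fibres of smaller weight, so $F$ must be fundamental.
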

\begin{proof}
Let $F$ be a least\=/weight toroidal normal fibre of $M$.
We will prove that $F$ is fundamental; by \zcref{thm:bound on weight of fundamental surfaces}, this will imply the bound in the statement.
Suppose, for the sake of contradiction, that $F$ can be presented as a normal sum $F=G_1+G_2$, with $G_1$ and $G_2$ non\=/empty normal surfaces.
We can assume that $G_1$ and $G_2$ minimise the number of components of $G_1\cap G_2$; by \cite[Lemma~3.3.30]{matveev:algorithmic-topology-classification}, this guarantees that $G_1$ and $G_2$ are connected.

As we already observed at the start of the proof of \zcref{thm:fibre of small weight}, the fibre $F$ is lw\=/taut.
\citeauthor{tollefson-wang:taut-normal-surfaces}'s \zcref{thm:lw-taut normal surfaces} then implies that every normal surface carried by $C_F$ is lw\=/taut -- in particular, this holds for $G_1$ and $G_2$.
Therefore, the surfaces $G_1$ and $G_2$ are orientable, incompressible, and homologically non\=/trivial.
By \zcref{thm:incompressible surfaces in torus bundles}, we deduce that $G_1$ and $G_2$ are toroidal fibres of $M$; however, this contradicts the minimality of $F$.
\end{proof}

\subsection{Fibredness is in NP}

By combining \zcref{thm:fibre of small weight,thm:toroidal fibre of small weight}, we obtain that if a triangulated orientable $3$\=/manifold fibres over the circle, and the fibre is a torus or has negative Euler characteristic, then said $3$\=/manifold admits a fibre of exponential weight in the number of tetrahedra.
Since the size of our fibredness certificate depends logarithmically on the weight of the fibre, this is enough to certify fibredness in polynomial time.
Formally, we consider the following decision problem. 

\begin{algoproblem}[\problemname{Fibredness detection}]
\label{prb:fibredness detection}
\Input{a triangulation of a compact connected oriented $3$\=/manifold $M$.}
\Output{whether $M$ fibres over the circle.}
The size of the input is measured by the number of tetrahedra in the triangulation of $M$.
\end{algoproblem}

\begin{theorem}[{\nameref{prb:fibredness detection} is in \NP}]
\label{thm:fibredness detection is in NP}
The problem \nameref{prb:fibredness detection} is in \NP{}.
\end{theorem}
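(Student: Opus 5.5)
We split according to the topology of the fibre. For each case we exhibit a polynomial-size certificate that a verifier can check in polynomial time.

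Fix a fibred $M$ with $t$ tetrahedra and fibre $F_0$. The first step is to dispose of fibres of non-negative Euler characteristic other than the torus.
\begin{itemize}
\item A sphere fibre forces $M\homeo S^2\times S^1$.
\item A disc fibre forces $M\homeo D^2\times S^1$.
\item An annulus fibre forces $M\homeo S^1\times[0,1]\times S^1$; the other bundle with annulus fibre is non-orientable, and $M$ is orientable.
\end{itemize}
Recognition of each of these manifolds is in \NP{} by \cite[Theorem~3]{ivanov:computational-complexity-basic} and \cite[Theorem~12.1]{lackenby:efficient-certification-knottedness}. The certificate in these cases is a flag saying which case applies, together with the corresponding auxiliary certificate. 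Orientability of $M$ is checked directly from the triangulation.

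In the remaining cases, where $\chi(F_0)<0$ or $F_0$ is a torus, $M$ is irreducible and boundary-irreducible, since $F_0$ is not a sphere or disc. The certificate is the following.
\begin{enumarabic}
\item A transversely oriented least-weight normal fibre $F$. Its weight is bounded via the previous propositions: by \zcref{thm:fibre of small weight} there is a normal fibre of weight at most $t^3\cdot2^{7t+8}$, and by \zcref{thm:toroidal fibre of small weight} there is one of weight at most $t^2\cdot 2^{7t+7}$. Taking a least-weight fibre in the same isotopy class only lowers the weight, and \zcref{thm:normalising surfaces} guarantees it is normal. Hence $\log w(F)=\bigO(t)$ and $\v{F}$ has polynomial bit-size.
\item The transverse orientation, given at one point of $F\cap\TTT^{(1)}$.
\item A certificate $\Sigma\in\cert*[fib](M,F)$, whose existence is \zcref{thm:existence of fibredness certificate}.
\end{enumarabic}
By the quantitative conditions in \zcref{def:certificate}, together with the requirement that $\Sigma'$ and $\Sigma''$ are polynomial in $\card{\RRR}\le986t$, the size $\card{\Sigma}$ is polynomial in $t$.

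The verifier performs the following checks.
\begin{enumarabic}
\item Check that the vector satisfies the matching and consistency equations, so that it represents a normal surface $F$.
\item Check that $F$ is connected, using \zcref{thm:counting components of a normal surface}.
\item Check that $F$ is orientable, using \zcref{thm:transverse orientation of a normal surface}; this also propagates the transverse orientation.
\item Check that $\Sigma$ is valid, by running the algorithm of \zcref{thm:verification of fibredness certificate}.
\end{enumarabic}
Each step is polynomial in $t$ and $\log w(F)$, hence in $t$.

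For soundness, suppose all checks pass. Then \zcref{thm:correctness of fibredness certificate}, \zcref{itm:correctness of fibredness certificate:M' interval bundle}, shows that $M\cut F$ is a product interval bundle over $F$ compatible with the pre-sutured structure, with vertical boundary $\boundary F\times[0,1]=M'\cap\boundary M$. Regluing the two copies of $F$ yields a fibration of $M$ over $S^1$ with fibre $F$.

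The main point to watch is the weight bound in the first step of the certificate. The certificate construction (via \zcref{thm:parallelity bundle of a least-weight fibre}) needs a \emph{least-weight} fibre, but the small-weight results only produce some fibre of small weight. This is resolved because least weight is taken within the isotopy class, so one simply takes the least-weight representative of the isotopy class of the small fibre; its weight can only be smaller. A secondary subtlety is orientation: the input $M$ is oriented, so $\TTT$ is oriented as required by the earlier propositions.
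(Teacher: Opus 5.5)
\textbf{Gap: the case split misses an orientable annulus bundle.} Apart from this, your route is the paper's: you dispose of the non-torus fibres with $\chi\ge0$ via Ivanov and Lackenby, and otherwise certify with a least-weight normal fibre of exponential weight together with $\Sigma\in\cert*[fib](M,F)$. Completeness comes from \zcref{thm:fibre of small weight,thm:toroidal fibre of small weight,thm:existence of fibredness certificate} and soundness from \zcref{thm:correctness of fibredness certificate}. Your remark that one passes to the least-weight representative of the small fibre's isotopy class, re-normalising via \zcref{thm:normalising surfaces}, correctly fills in a step the paper leaves implicit.

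Your assertion that $S^1\times[0,1]\times S^1$ is the only orientable annulus bundle is false. The involution $(\theta,s)\mapsto(-\theta,1-s)$ of $S^1\times[0,1]$ preserves orientation and swaps the two boundary circles. Its mapping torus is the orientable twisted $I$-bundle $K\mathbin{\tilde{\times}}[0,1]$ over the Klein bottle, which has a single torus boundary component. Since $H^1(K\mathbin{\tilde{\times}}[0,1];\ZZ)\cong\ZZ$, every fibration of this manifold has annulus fibres. So this yes-instance is covered neither by the cited recognition results nor by your main certificate as it stands. The paper's proof lists exactly the same three manifolds, so it has the same omission.

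\textbf{A fix within the paper's framework.} \zcref{thm:existence of fibredness certificate,thm:correctness of fibredness certificate,thm:verification of fibredness certificate} use nothing about the fibre beyond its being a least-weight normal fibre. So you only need an exponential weight bound for some annular normal fibre, which you can get by arguing as in \zcref{thm:toroidal fibre of small weight}:
\begin{enumerate}
\item Suppose a minimal-weight annular normal fibre splits as $F=G_1+G_2$ with connected summands. Both summands are lw-taut by \zcref{thm:lw-taut normal surfaces}.
\item The Thurston norm vanishes identically here, so each $G_i$ is an annulus or a torus with non-zero class.
\item Incompressible tori in $K\mathbin{\tilde{\times}}[0,1]$ are boundary-parallel, hence null-homologous, so each $G_i$ is an annulus.
\item A non-separating essential annulus can be made horizontal in the Seifert fibration over the disc with two cone points of order $2$, since vertical essential annuli there separate. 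A non-separating horizontal surface is a fibre.
\item This gives a fibre of smaller weight than $F$, contradicting minimality. Hence $F$ is fundamental, and $w(F)\le t^2\cdot2^{7t+7}$ by \zcref{thm:bound on weight of fundamental surfaces}.
\end{enumerate}
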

\begin{proof}
The problems of recognising compact connected orientable $3$\=/manifolds that fibre over the circle with fibre of non\=/negative Euler characteristic and not a torus have been already addressed in the literature.
Specifically, the following recognition problems are known to be in \NP{}:
\begin{itemize}
\item recognising $S^2\times S^1$, by \cite[Theorem~3]{ivanov:computational-complexity-basic};
\item recognising $D^2\times S^1$, by \cite[Theorem~3]{ivanov:computational-complexity-basic};
\item recognising $S^1\times[0,1]\times S^1$, by \zcref{thm:interval bundle recognition is in NP} (that is, \cite[Theorem~12.1]{lackenby:efficient-certification-knottedness}, with the caveat that, since we are trying to recognise an interval bundle over the closed surface $S^1\times S^1$, the discussion at the beginning of \cite[Section~11]{lackenby-schleimer:recognising-elliptic-manifolds} is necessary).
\end{itemize}
Since these are only a finite number of cases, they can each be handled by their respective verification algorithms, and we can therefore focus only on the case where $M$ fibres over the circle with fibre of negative Euler characteristic or a torus.

The verification algorithm takes as input a triangulation $\TTT$ of a compact connected oriented $3$\=/manifold $M$, and a certificate consisting of a transversely oriented normal surface $F$ in $M$ and a certificate $\Sigma\in\cert[fib](M,F)$.
The algorithm then verifies that $F$ is connected and orientable, using \zcref{thm:counting components of a normal surface,thm:transverse orientation of a normal surface}, and that $\Sigma\in\cert*[fib](M,F)$, using the algorithm of \zcref{thm:verification of fibredness certificate}.
These verifications can be performed in polynomial time in $\card{\TTT}$ and $\card{\Sigma}$.
If these checks are successful, then the third statement of \zcref{thm:correctness of fibredness certificate} implies that $M$ fibres over the circle with fibre $F$.
Conversely, \zcref{thm:fibre of small weight,thm:toroidal fibre of small weight,thm:existence of fibredness certificate} guarantee that, if $M$ fibres over the circle with fibre of negative Euler characteristic or a torus, then there exist a transversely oriented connected normal surface $F$ in $M$ and a certificate $\Sigma\in\cert*[fib](M,F)$ such that $\card{\Sigma}$ is bounded above by a polynomial in $\card{\TTT}$.
\end{proof}

\printbibliography[heading=bibintoc]

\end{document}